\theoremstyle{plain}
\newtheorem{lemma}{Lemma}[section]
\newtheorem{theorem}[lemma]{Theorem}
\newtheorem{proposition}[lemma]{Proposition}
\newtheorem{definition}[lemma]{Definition}
\theoremstyle{remark}
\newtheorem{remark}[lemma]{Remark}
\newtheorem{observation}[lemma]{Observation}
\def\Sn{\mathfrak{S}}   %
\def\R{\mathbb{R}}
\def\density{{\sf Dens}}
\def\cograph{{\sf Cograph}} 
\def\Cograph{{\sf Cograph}}
\def\One{\mathtt{1}}
\def\Zero{\mathtt{0}}
\def\formerell{j}
\def\even{\mathrm{even}}
\def\odd{\mathrm{odd}}
\def\proba{\mathbb{P}}
\def\esper{\mathbb{E}}
\DeclareMathOperator{\SubGraph}{SubGraph}
\DeclareMathOperator{\Sample}{Sample}
\DeclareMathOperator{\Dec}{Dec}
\DeclareMathOperator{\rank}{rk}
\def\dbox{\delta_{\Box}}
\def\SpaceGraphon{\widetilde{\mathcal W_0}}
\newcommand{\Exc}{ \scalebox{1.1}{$\mathfrak{e}$}}
\newcommand\restr[2]{{% 
		\left.\kern-\nulldelimiterspace % 
		#1 % 
		\right|_{#2} % 
	}}
\title[Limit of random cographs]{Random cographs: Brownian graphon limit\\
and asymptotic degree distribution}
\author[F. Bassino]{Frédérique Bassino}
       \address[FB]{Université Paris 13, Sorbonne Paris Cité, LIPN, CNRS UMR 7030, F-93430 Villetaneuse, France}
       \email{bassino@lipn.univ-paris13.fr}
 \author[M. Bouvel]{Mathilde Bouvel}
       \address[MB]{Institut für Mathematik, Universität Zürich, Winterthurerstr. 190, CH-8057 Zürich, Switzerland, and \\
       Université de Lorraine, CNRS, Inria, LORIA, F 54000 Nancy, France}
       \email{mathilde.bouvel@loria.fr}
       \author[V. Féray]{Valentin Féray}
       \address[VF]{Université de Lorraine, CNRS, IECL, F 54000 Nancy, France}
       \email{valentin.feray@univ-lorraine.fr}
 \author[L. Gerin]{Lucas Gerin}
       \address[LG]{CMAP, \'Ecole Polytechnique, CNRS, Route de Saclay, 91128 Palaiseau Cedex, France}
       \email{gerin@cmap.polytechnique.fr}
  \author[M. Maazoun]{Mickaël Maazoun}
   \address[MM]{École Normale Supérieure de Lyon, UMPA UMR 5669 CNRS, 46 allée d’Italie, 69364 Lyon Cedex 07, France}
   \email{mickael.maazoun@ens-lyon.fr}
 \author[A. Pierrot]{Adeline Pierrot}
 \address[AP]{Université Paris-Saclay, CNRS, Laboratoire Interdisciplinaire des Sciences du Numérique, 91400, Orsay, France}
       \email{adeline.pierrot@lri.fr}
\keywords{cographs, Brownian excursion, graphons, degree distribution}
\subjclass[2010]{60C05,05A05}
\begin{document}

\begin{abstract}
We consider uniform random cographs (either labeled or unlabeled) of large size.
Our first main result is the convergence towards a Brownian limiting object in the space of graphons.
We then show that the degree of a uniform random vertex in a uniform cograph is of order $n$,
and converges after normalization to the Lebesgue measure on $[0,1]$.
We finally analyze the vertex connectivity (\emph{i.e.} the minimal number of vertices whose removal disconnects the graph) of random connected cographs, 
and show that this statistics converges in distribution without renormalization. 
Unlike for the graphon limit and for the degree of a random vertex,
the limiting distribution of the vertex connectivity is different in the labeled and unlabeled settings.

Our proofs rely on the classical encoding of cographs via cotrees.
We then use mainly combinatorial arguments, including the symbolic method and singularity analysis.
\end{abstract}

\maketitle

%\comment{Lors de la révision ajouter la ref pour la continuité de la distribution des degrés trouvée par michael (Threshold graphs, Diaconis, Holmes et Janson, il me semble.}
%Fait !

\section{Introduction}

\subsection{Motivation}

Random graphs are arguably the most studied objects at the interface of combinatorics and probability theory. 
One aspect of their study consists in analyzing a uniform random graph of large size $n$ in a prescribed family, 
\emph{e.g.} perfect graphs \cite{RandomPerfect}, planar graphs \cite{NoyPlanarICM},
 graphs embeddable in a surface of given genus \cite{RandomGraphsSurface},
  graphs in subcritical classes \cite{SubcriticalClasses}, 
  hereditary classes \cite{GraphonsEntropy} 
  or addable classes \cite{ConjectureAddable,ProofConjectureAddable}. 
The present paper focuses on uniform random \emph{cographs} (both in the labeled and unlabeled settings).
\medskip

Cographs were introduced in the seventies by several authors independently, see \emph{e.g.}~\cite{Seinsche} 
and further references on the Wikipedia page~\cite{Wikipedia}. 
They enjoy several equivalent characterizations. Among others, cographs are  
\begin{itemize}
 \item the graphs avoiding $P_4$ (the path with four vertices) as an induced subgraph;
 \item the graphs which can be constructed from graphs with one vertex by taking disjoint unions and joins;
 \item the graphs whose modular decomposition does not involve any prime graph; 
 \item the inversion graphs of separable permutations.
\end{itemize}

Cographs have been extensively studied in the algorithmic literature. 
They are recognizable in linear time~\cite{CorneilLinear,Habib,Bretscher} 
and many computationally hard problems on general graphs are solvable in polynomial time when restricted to cographs; see \cite{corneil} and several subsequent works citing this article. 
In these works, as well as in the present paper, 
a key ingredient is the encoding of cographs by some trees, called \emph{cotrees}. 
These cotrees witness the construction of cographs using disjoint unions and joins (mentioned in the second item above).

To our knowledge, cographs have however not been studied from a probabilistic perspective so far.
Our motivation to the study of random cographs comes from our previous work~\cite{Nous1,Nous2,SubsClosedRandomTrees,Nous3} 
which exhibits a Brownian limiting object for separable permutations (and various other permutation classes). 
The first main result of this paper (\cref{th:MainTheorem}) is the description of a Brownian limit for cographs. 
Although cographs are the inversion graphs of separable permutations, 
this result is not a consequence of the previous one on permutations: 
indeed the inversion graph is not an injective mapping,
hence a uniform cograph is not the cograph of a uniform separable permutation. 

Our convergence result holds in the space of \emph{graphons}. 
Graphon convergence has been introduced in~\cite{IntroducingGraphons} 
and has since then been a major topic of interest in graph combinatorics -- see \cite{LovaszBook} for a broad perspective on the field. 
The question of studying graphon limits of uniform random graphs
(either labeled or unlabeled) in a given class is raised by Janson
in \cite{JansonGraphLimits} (see Remark 1.6 there).
Some general results have been recently obtained for hereditary\footnote{A class of graphs 
is hereditary if any induced subgraph of a graph in the class is in the class as well.}
classes in \cite{GraphonsEntropy}.
However, these results (in particular Theorem 3 in \cite{GraphonsEntropy})
do not apply to cographs, 
since the class of cographs contain $e^{o(n^2)}$ graphs of size $n$.

The graphon limit of cographs found here, which we call the \emph{Brownian cographon}, 
is constructed from a Brownian excursion. 
By analogy with the realm of permutations~\cite{Nous2, SubsClosedRandomTrees},
 we expect that the Brownian cographon 
(or a one-parameter deformation of it) is a universal limiting object for 
uniform random graphs in classes of graphs 
which are small\footnote{A class of labeled (resp. unlabeled) graphs is \emph{small} when its exponential (resp. ordinary) generating series has positive radius of convergence.} 
and closed under the substitution operation at the core of the modular decomposition. \medskip

\subsection{Main results}~

From now on, for every $n\geq 1$, we let $\bm G_n$ and $\bm G_n^u$ be uniform random labeled and unlabeled cographs of size 
$n$, respectively. 
It is classical (see also \cref{defi:Graphe->Graphon} below) to associate with any graph a graphon, 
and we denote by $W_{\bm G_n}$ and $W_{\bm G_n^u}$ the graphons associated with $\bm G_n$ and $\bm G_n^u$. 

We note that the graphons associated with a labeled graph and its unlabeled version are the same. 
However, $W_{\bm G_n}$ and $W_{\bm G_n^u}$ have different distributions, 
since the number of possible labelings of an unlabeled cograph of a given size varies (see \cref{fig:all_cographs_4} p.\pageref{fig:all_cographs_4} for an illustration). 

\begin{theorem}
\label{th:MainTheorem}
We have the following convergences in distribution as $n$ tends to $+\infty$:
$$
W_{\bm G_n} \to W^{1/2}, \qquad  W_{\bm G_n^u} \to W^{1/2},
$$ 
where $W^{1/2}$ is the Brownian cographon introduced below in \cref{def:BrownianCographon}. 
\end{theorem}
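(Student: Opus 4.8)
The plan is to lift the problem to cotrees, where random cographs become random labeled/unlabeled trees with internal nodes decorated by $\Zero$ (disjoint union) and $\One$ (join), and then transfer a Brownian limit for these trees to a graphon limit. The key observation is that the graphon $W_{\bm G_n}$ is a deterministic functional of the cotree: for two leaves $x,y$, the pair is an edge of the cograph iff the decoration of their lowest common ancestor is $\One$. So I would first recall (or establish, via the symbolic method and singularity analysis applied to the standard grammar for cotrees — series--parallel / Schröder-type trees) that a uniform random cotree of size $n$, suitably rescaled, converges in the Gromov--Hausdorff--Prokhorov sense (or in the sense of convergence of finite-dimensional distributions of subtree shapes) to a Brownian-type continuum random tree, with the $\Zero/\One$ decorations converging to an i.i.d.\ fair-coin labeling of the branch points — this is where the exponent $1/2$ in $W^{1/2}$ comes from, and it is the content of \cref{def:BrownianCographon}.

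For the actual graphon convergence, the clean route is through \emph{subgraph densities}: by the standard theory (see \cite{LovaszBook}), convergence in distribution of random graphons in the cut metric is equivalent to joint convergence in distribution of all induced subgraph densities $t_{\mathrm{ind}}(H, \bm G_n)$ for finite graphs $H$. Sampling $k$ i.i.d.\ uniform leaves of the cotree and reading off the induced subgraph amounts to: pick $k$ uniform leaves, look at the combinatorial shape of the subtree they span together with the decorations of its branch points, and apply the "lowest common ancestor" rule. Hence I would show that for any fixed $k$, the joint law of (shape of the spanned subtree, decorations of its internal nodes) for $k$ uniform leaves in $\bm G_n$'s cotree converges to the corresponding law in the Brownian cographon — which is exactly a statement about convergence of finite-dimensional marginals of the rescaled cotree plus the asymptotic independence and uniformity of the decorations. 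This yields convergence of all $\esper[t_{\mathrm{ind}}(H,\bm G_n)]$ and, with a second-moment / higher-moment computation (again reducing to spanning-subtree statistics of a bounded number of uniform leaves), convergence in distribution.

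The same argument must be run in the unlabeled setting. Here the only extra input needed is that a uniform unlabeled cograph of size $n$ corresponds to a uniform unlabeled cotree of size $n$, and that unlabeled Schröder-type trees also admit the Brownian continuum limit with i.i.d.\ fair decorations; equivalently, one can use a Boltzmann/Galton--Watson description and note that the labeled and unlabeled random cotrees have the same scaling limit (a standard phenomenon for simply-generated / subcritical-type tree families — the species-theoretic "unlabeled = labeled up to lower-order corrections" principle applies because the relevant generating functions are algebraic with a square-root singularity). Since the limiting graphon $W^{1/2}$ is the same object in both cases, the two convergences in \cref{th:MainTheorem} follow from the same scheme.

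\textbf{Main obstacle.} The delicate point is \emph{not} the combinatorial encoding but controlling the limit of subtree-spanning statistics with decorations — concretely, proving that for $k$ uniformly chosen leaves the decorations of the $O(k)$ branch points of the spanned subtree become asymptotically i.i.d.\ uniform on $\{\Zero,\One\}$ and asymptotically independent of the tree shape, uniformly enough to pass to the limit in densities. In the labeled case this can be read off from an explicit Boltzmann-sampler / substitution-structure computation; in the unlabeled case it requires the analogous statement for unlabeled trees, where symmetries slightly perturb the marginal law of a single decoration, and one must check these perturbations vanish as $n\to\infty$. A secondary technical point is verifying that convergence of all induced subgraph densities in distribution does upgrade to convergence of the random graphons in $\SpaceGraphon$ (the cut-metric space of graphons), which is where I would invoke the moment-method characterization from \cite{LovaszBook}.
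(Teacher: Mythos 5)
Your overall architecture is the same as the paper's: encode cographs by canonical cotrees, reduce graphon convergence to convergence in distribution of the induced subgraph on $k$ uniform vertices (the Diaconis--Janson criterion, \cref{th:Portmanteau}, packaged in the paper as \cref{lem:factorisation}), and show that the cotree spanned by $k$ uniform leaves converges to a uniform labeled binary cotree with i.i.d.\ fair decorations. Two corrections on the labeled side, one cosmetic and one substantive. First, no second- or higher-moment computation is needed: by \cite{DiaconisJanson} (items (c)--(d) of \cref{th:Portmanteau}), convergence of the \emph{expected} induced densities alone yields convergence in distribution of the random graphons and identifies the limit. Second, the preliminary CRT/GHP scaling limit you propose is both unnecessary and, as stated, insufficient: in a \emph{canonical} cotree the decorations alternate $\Zero/\One$ along every branch (\cref{def:canonical_cotree}), so they are maximally non-i.i.d.\ at the discrete level, and what makes the limiting decorations fair coins is that the distances between consecutive branching nodes of the spanned subtree have asymptotically equidistributed \emph{parity} --- a mod-$2$ statistic that a metric scaling limit cannot see. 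The paper handles exactly this with the even/odd blossom series $L^{\mathrm{even}},L^{\mathrm{odd}}$ and the piece decomposition of \cref{th:SerieT_t0}, whose singularity analysis gives \cref{prop:proba_arbre}; your ``read off from a Boltzmann-sampler computation'' gestures at this but does not engage with the alternation issue.

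The more serious gap is the unlabeled case. Unlabeled cotrees are P\'olya structures, not simply generated trees, and there is no off-the-shelf ``unlabeled $=$ labeled up to lower-order corrections'' principle for the statistic you need (the law of the decorated subtree spanned by $k$ uniform leaves of a uniform \emph{unlabeled} cotree). Moreover, marking $k$ leaves in an unlabeled object is ambiguous: standard unlabeled combinatorial decompositions count orbits of marked labeled trees, whereas sampling $k$ uniform leaves from a uniform unlabeled tree corresponds to marking leaves of a fixed representative, and these two models do not coincide. Resolving this is the actual content of the paper's unlabeled section: one lifts to the labeled class of pairs $(t,a)$ with $a$ a root-preserving automorphism (an $n!$-to-$1$ cover of unlabeled cotrees), decomposes the restricted class $\mathcal V_{t_0}$ in which the branching nodes and their relevant children are fixed by $a$ (\cref{Def:V_t_0}, \cref{th:V_t_0}), and obtains for each binary $t_0$ a \emph{lower} bound on $\proba(\bm t^{(n)}_k=t_0)$ which already sums to $1$ over all binary $t_0$, forcing the limit (\cref{prop:proba_arbre_unlabeled}); this is also where one sees that symmetries are asymptotically negligible. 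Something of this kind (or a genuine extension of the P\'olya-tree approximation machinery of \cite{PanaStufler} to decorations and leaf-marking) is required; as written, your appeal to a ``standard phenomenon'' assumes away precisely the step where the labeled and unlabeled settings could a priori have diverged.
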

Roughly speaking, the graphon convergence
is the convergence of the rescaled adjacency matrix with an unusual metric, the {\em cut metric}, see \cref{subsec:space_graphons}.
To illustrate \cref{th:MainTheorem}, we show on \cref{fig:simu} the adjacency matrix 
of a large random uniform labeled cograph.  
Entries $1$ in the matrix are represented as black dots, entries $0$ as white dots.
It was obtained by using the encoding of cographs by cotrees and sampling a large uniform cotree using Boltzmann sampling \cite{Boltzmann} of the equation \eqref{eq:SerieS}, p. \pageref{eq:SerieS}.
Note that the order of vertices in the axis of \cref{fig:simu} is not the order of labels but is given by the depth-first search of the associated cotree.
The fractal aspect of the figure -- appearance of imbricated squares at various scale --
is consistent with the convergence to a Brownian limiting object, since the Brownian excursion
enjoys some self-similarity properties.

\begin{figure}[htbp]
\[\includegraphics[height=5cm]{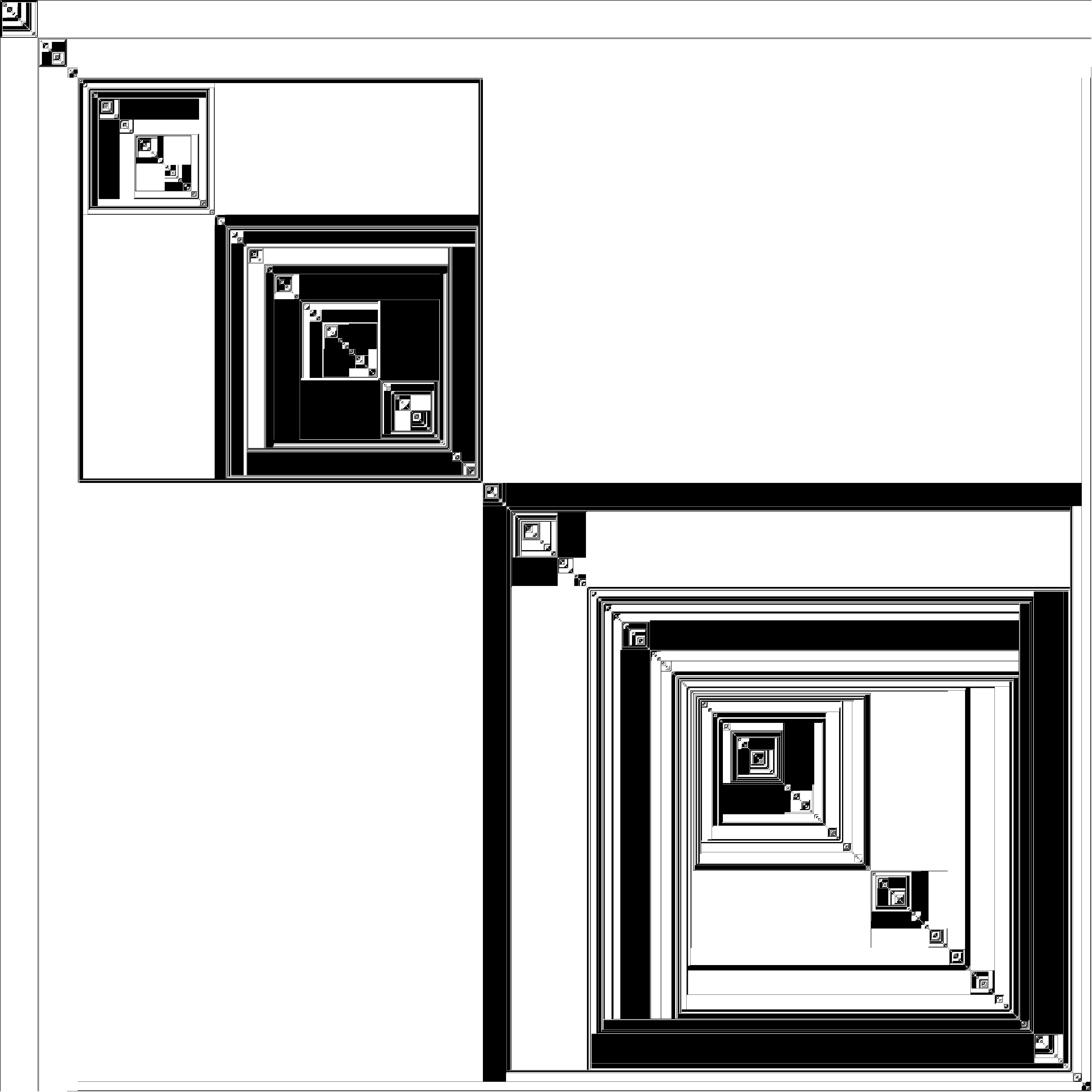}\]
\caption{The adjacency matrix of a uniform labeled random cograph of size 4482. 
\label{fig:simu}}
\end{figure}
\medskip

We now present further results. It is well-known that
the graphon convergence entails the convergence of many graph statistics, 
like subgraph densities, the spectrum of the adjacency matrix, the normalized degree distribution (see \cite{LovaszBook}
and \cref{sec:Graphons} below). 
Hence, \cref{th:MainTheorem} implies that these statistics have the same limit in the labeled and unlabeled cases, 
and that this limit may (at least in principle) be described in terms of the Brownian cographon. 
Among these, the degree distribution of the Brownian cographon (or to be precise, its \emph{intensity}\footnote{The degree distribution of a graphon is a measure, and therefore 
that of the Brownian cographon is a \emph{random} measure.
Following Kallenberg \cite[Chapter 2]{RandomMeasures},
we call the \emph{intensity} of a random measure $\bm \mu$ the (deterministic) measure $I[\bm \mu]$ defined by $I[\bm \mu](A) = \esper[\bm \mu(A)]$ for all measurable sets $A$.
In other words, we consider here the ``averaged'' degree distribution of the Brownian cographon, where we average on all realizations of the Brownian cographon.}) 
is surprisingly nice: it is simply the Lebesgue measure on $[0,1]$. 
We therefore have the following result, where 
we denote by $\deg_G(v)$ the degree of a vertex $v$ in a graph $G$.
\begin{theorem}
\label{th:DegreeRandomVertex}
For every $n\geq 1$, let $\bm v$ and $\bm v^u$ be uniform random vertices in $\bm G_n$ and $\bm G_n^u$, respectively.
We have the following convergences in distribution  as $n$ tends to $+\infty$:
\[\tfrac1n \deg_{\bm G_n}(\bm v) \to U, \qquad \tfrac1n \deg_{\bm G_n^u}(\bm v^u) \to U,\]
where $U$ is a uniform random variable in $[0,1]$.
\end{theorem}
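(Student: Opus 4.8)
The plan is to prove Theorem~\ref{th:DegreeRandomVertex} directly from the combinatorial structure, via the cotree encoding, rather than as a consequence of Theorem~\ref{th:MainTheorem}. (In principle one could try to extract it from graphon convergence, but graphon convergence only gives convergence of the \emph{normalized degree distribution as a random measure}, and one still has to compute the intensity of the Brownian cographon, which is itself a nontrivial excursion-theoretic computation; the combinatorial route is cleaner and gives both the labeled and unlabeled cases uniformly.) Let me describe the approach for the labeled case first; the unlabeled case will be analogous.

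\textbf{Step 1: reduction to a cotree statistic.} Recall that a labeled cograph of size $n$ is encoded by a labeled cotree: a rooted tree whose internal nodes are decorated $\Zero$ (disjoint union) or $\One$ (join), with decorations alternating along every root-to-leaf path, and whose $n$ leaves carry the vertex labels. Fix a uniform random cograph $\bm G_n$, encoded by a uniform random cotree $\bm T_n$, and a uniform random vertex $\bm v$, i.e.\ a uniform random leaf of $\bm T_n$. For two leaves $u,v$ of a cotree, $u$ and $v$ are adjacent in the cograph iff the decoration of their \emph{last common ancestor} is $\One$. Hence, conditionally on $\bm T_n$ and on $\bm v$, the degree $\deg_{\bm G_n}(\bm v)$ is the number of leaves $w\neq \bm v$ whose last common ancestor with $\bm v$ is a $\One$-node. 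Walking up the path from $\bm v$ to the root, call the internal nodes $a_1, a_2, \dots, a_k$ (so $a_1$ is the parent of $\bm v$ and $a_k$ is the root), and let $s_i$ be the number of leaves lying strictly below $a_i$ but not below $a_{i-1}$ (for $i=1$, all leaves below $a_1$ other than $\bm v$). Then
\[
\deg_{\bm G_n}(\bm v) \;=\; \sum_{i:\, a_i \text{ is a } \One\text{-node}} s_i .
\]
Because decorations alternate, exactly every other term in this sum is included, so $\deg_{\bm G_n}(\bm v)$ is either the sum of the ``odd-indexed'' $s_i$'s or the sum of the ``even-indexed'' $s_i$'s, depending only on the decoration of $a_1$, which by symmetry of the $\Zero/\One$ roles is a fair coin flip (uniformly over cotrees, swapping all decorations is a size-preserving involution on cotrees of height $\geq 1$, and it swaps "odd-branch-sum" with "even-branch-sum" while fixing $n$ and the total $\sum_i s_i = n-1$).

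\textbf{Step 2: analytic computation of the joint law via generating functions.} The goal is to show $\tfrac1n\deg_{\bm G_n}(\bm v)\to U$. By Step~1 it suffices to understand, for the pair $(\bm T_n,\bm v)$, the joint law of the ``spine profile'' $(s_1,\dots,s_k)$, or rather just enough of it to identify the limit of the normalized partial sum along alternate levels. The cleanest implementation is the symbolic method: introduce a bivariate generating series $C(z,w)$ counting cotrees with a marked leaf, where $z$ marks all leaves and $w$ marks only those leaves whose last common ancestor with the marked leaf is a $\One$-node (equivalently, using the decomposition of the cotree along the spine from root to marked leaf as an alternating sequence of "bushes"). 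One then reads off $\esper\big[x^{\deg_{\bm G_n}(\bm v)}\big]$ for $x=e^{it/n}$, or more simply computes the moments $\esper\big[(\tfrac1n\deg_{\bm G_n}(\bm v))^m\big]$ by repeated differentiation of $C(z,w)$ at $w=1$ and singularity analysis of the resulting series (the relevant series are algebraic, coming from \eqref{eq:SerieS} and its variants, so singularity analysis applies). The claim is that the $m$-th moment converges to $\tfrac1{m+1}$, the $m$-th moment of the uniform law on $[0,1]$, which (the law being determined by its moments) gives the result. The key structural input making this tractable is that, conditioned on the total number of ``spine bushes'' and their sizes, the split between $\One$-bushes and $\Zero$-bushes is essentially a deterministic alternation plus one coin flip, so one really only needs the limiting law of $\big(\tfrac{s_1+s_3+\cdots}{n}, \tfrac{s_2+s_4+\cdots}{n}\big)$, which should converge to $(U,1-U)$.

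\textbf{Step 3: the unlabeled case.} A uniform unlabeled cograph corresponds to a uniform unlabeled cotree, which (being a Schröder-like tree with alternating decorations and unordered children) has its own algebraic generating series; the same marked-leaf decomposition along the spine works, and one runs the identical moment computation with the unlabeled series in place of the labeled one. The point to check is that the dominant-singularity analysis again yields moments $\to \tfrac1{m+1}$; since the Brownian cographon limit is the same in both settings (Theorem~\ref{th:MainTheorem}), this is expected, and in fact one can alternatively deduce the unlabeled case from Theorem~\ref{th:MainTheorem} once the labeled computation has identified the limiting intensity as Lebesgue measure.

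\textbf{Main obstacle.} The conceptual content is Step~1; the technical heart is Step~2. The main obstacle I anticipate is bookkeeping the bivariate generating function for the spine decomposition correctly — in particular handling the alternation constraint on decorations along the spine (so the spine is an alternating sequence of two types of "bushes"), and correctly accounting for where the marked leaf sits and which subtrees hang off $\One$-nodes versus $\Zero$-nodes. Once the series is set up, extracting the moment asymptotics is routine singularity analysis, but getting the combinatorial specification exactly right (including the factor coming from the coin flip in Step~1, and, in the labeled case, the exponential weights for distributing labels among the bushes) is where care is needed. A secondary point is justifying the moment method, i.e.\ that $\tfrac1n\deg_{\bm G_n}(\bm v)$ is bounded in $[0,1]$ (immediate) so its moments converge to those of a $[0,1]$-valued limit and the uniform law is the unique such limit — this part is painless.
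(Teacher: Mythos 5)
There is a genuine gap, and it sits exactly where your proposal declares the work ``routine.'' Step 1 (degree $=$ sum of the bush sizes $s_i$ over $\One$-decorated spine nodes, with the alternation reducing this to one of the two alternating sums chosen by the root-decoration bit) is correct, but it only buys you the symmetry $\deg_{\bm G_n}(\bm v) \stackrel{d}{=} n-1-\deg_{\bm G_n}(\bm v)$. The actual content of the theorem is your unproved assertion that $\bigl(\tfrac{s_1+s_3+\cdots}{n},\tfrac{s_2+s_4+\cdots}{n}\bigr)$ converges to $(U,1-U)$, equivalently that the $m$-th moments converge to $\tfrac{1}{m+1}$. This does not fall out of setting up the bivariate series and differentiating: after singularity analysis the limiting $m$-th moment appears as $\tfrac{m!}{(2m)!}$ times the number of labeled binary cotrees with $m+1$ leaves in which leaf $1$ is joined to every other leaf (this is what the transfer through \cref{prop:proba_arbre} gives), so you still must prove the combinatorial identity that this count is exactly a fraction $\tfrac1{m+1}$ of all $\tfrac{(2m)!}{m!}$ such cotrees. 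That identity is precisely the crux, and nothing in your sketch produces it; ``routine singularity analysis'' only delivers the reduction to it. In the unlabeled case you additionally wave at the Pólya-type symmetry issues that the paper needs a whole separate construction (cotrees paired with automorphisms) to handle.

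For comparison, the paper takes the route you dismissed: it derives the theorem from \cref{th:MainTheorem} via continuity of the degree distribution (\cref{prop:degree}), and identifies the intensity of $D_{\bm W^{1/2}}$ with no excursion theory at all (\cref{prop:IntensityW12}). It uses the discrete approximation $\bm G_n^b=\Cograph(\bm b_n)$, with $\bm b_n$ a uniform decorated binary tree, which equals $\Sample_n(\bm W^{1/2})$ in law by \cref{prop:CaracterisationBrownianCographon}, and an explicit involution (flip the decorations of the ancestors having the marked leaf in their right subtree) showing the degree of a uniform vertex of $\bm G_n^b$ is \emph{exactly} uniform on $\{0,\dots,n-1\}$. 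That short bijective argument is exactly the missing ingredient your moment computation would eventually need in disguise; if you supplied it (or an equivalent count of binary cotrees in which one leaf is adjacent to all others), your spine/moment route could be completed, but as written the limit law is asserted rather than proved.
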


\medskip

On the other hand, other graph statistics are not continuous for the graphon topology, 
and therefore can have different limits in the labeled and unlabeled cases. 
We illustrate this phenomenon with the \emph{vertex connectivity} $\kappa$ (defined as the minimum number of vertices whose removal disconnects the graph). 
Our third result is the following. 

\begin{theorem}\label{thm:DegreeConnectivityIntro}
There exist different probability distributions $(\pi_\formerell)_{\formerell\geq 1}$ and $(\pi_\formerell^u)_{\formerell\geq 1}$ 
such that, for every fixed $\formerell \ge 1$, as $n$ tends to $+\infty$, we have
\begin{equation}
\mathbb{P}(\kappa(\bm G_n)=\formerell) \to\pi_\formerell,
\qquad \mathbb{P}(\kappa(\bm G^u_n)=\formerell) \to \pi^u_\formerell.
\end{equation}
\end{theorem}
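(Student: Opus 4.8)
The strategy is to transfer everything to cotrees. A connected cograph of size $n\ge 2$ corresponds to a cotree whose root is labelled $\One$ (a join), with subtrees hanging below; the vertex connectivity of the cograph is determined by a small, \emph{local} feature of the cotree near the root. Concretely, write the cograph $G$ as the join $H_1 * H_2 * \cdots * H_k$ of the cographs $H_1,\dots,H_k$ attached at the root ($k\ge 2$). A standard fact about joins is that $\kappa(H_1*\cdots*H_k) = \min_i\bigl(|G| - |H_i| + \kappa(H_i)\bigr)$ when some $H_i$ is disconnected, and otherwise (all $H_i$ connected) $\kappa(G) = \sum_i |H_i| - \max_i\bigl(|H_i| - \kappa(H_i)\bigr)$, with the convention $\kappa(K_1)=\infty$ or rather handled separately so that $\kappa$ of a complete graph on $m$ vertices is $m-1$. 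The upshot is that $\kappa(G)$ is small (bounded) exactly when some subtree $H_i$ at the root has bounded size and is ``far from complete'', or several root-subtrees are singletons, etc. In all cases, $\kappa(\bm G_n) = \formerell$ is an event that depends only on the top part of the cotree: the number $k$ of root-children, which of them are leaves, and the isomorphism types (in the unlabeled case) or labelled structures (in the labelled case) of the bounded-size children, together with connectivity data of the large children --- and a generic large cograph is such that its root-subtrees, if connected, are ``as connected as possible up to $o(n)$'', so effectively the large children contribute $|H_i|$ and we only track the finitely many small ones.

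Next I would set up the generating-function bookkeeping. Let $C(z)$ (resp.\ $C^u(z)$) be the exponential (resp.\ ordinary) generating function of labelled (resp.\ unlabelled) cographs, and $S(z)$, $S^u(z)$ the series for connected ones; these satisfy the classical cotree equations recalled around \eqref{eq:SerieS}, and singularity analysis gives the dominant singularity $\rho$ and the square-root-type behaviour of $S$, $C$ at $\rho$ (this is the same analysis underlying \cref{th:MainTheorem}). For each fixed $\formerell$, the event $\{\kappa(\bm G_n)=\formerell\}$ is a disjoint union over finitely many ``top-of-cotree shapes'': a shape specifies $k$, the multiset of small root-children with their types and sizes, and the stipulation that the remaining (large) children are connected cographs. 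The generating function for connected cographs realizing a given shape is a product of: polynomials/constants coming from the explicitly described small children, and factors $S(z)$ (or $S^u(z)$, with the appropriate symmetry/Pólya bookkeeping for unlabelled multisets of identical large subtrees) for each large child, the whole thing weighted so that the root join is respected (in the connected unlabelled case one must use the substitution into $\Zero$/$\One$-rooted trees and be careful with the multiset structure at the root, exactly as in the derivation of $S^u$). Dividing by the total count $S_n$ (resp.\ $S_n^u$) and doing singularity analysis, each shape contributes a limiting probability; summing the finitely many contributions gives $\pi_\formerell$ (resp.\ $\pi_\formerell^u$). The key point making $\sum_\formerell \pi_\formerell = 1$ is that the ``bad'' cases (a large root-child that is connected but has connectivity $\le$ some growing threshold, or unboundedly many small children) have vanishing probability --- this needs a crude tail estimate showing $\proba(\kappa(\bm G_n) > \formerell) \to \epsilon(\formerell)$ with $\epsilon(\formerell)\to 0$, again via generating functions: connectivity $> \formerell$ forces either $k$ large or the smallest root-child to be large-and-complete-ish, both of which are exponentially or polynomially suppressed.

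Finally, to see that $\pi_\formerell \neq \pi_\formerell^u$ for at least one $\formerell$ --- the ``unlike the graphon limit'' assertion --- I would compute a single explicit value, say $\formerell=1$, in both models. Vertex connectivity $1$ means $G$ has a cut vertex; for a join this happens essentially when $k=2$ and one of the two root-children is a single vertex (so $G = K_1 * H$ is a cone, and the apex is not a cut vertex, but... ) --- more carefully, $\kappa=1$ corresponds to a cotree whose root-$\One$ has exactly two children, one of which is a leaf and the other a connected cograph with a cut vertex, \emph{or} whose root has children making the min/max formula equal $1$; in any event the describing shapes are few and the computation reduces to ratios like $\lim_n n[z^n]\bigl(\text{small factor}\cdot S(z)\bigr)\big/ S_n$, which by singularity analysis equals (small factor evaluated at $\rho$) times a combinatorial constant, and the analogous unlabelled ratio replaces $\rho$ by $\rho^u$ and the labelled small-child count by the unlabelled one. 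Since the labelled and unlabelled cograph generating series have different singularities and the small-children enumerations differ, these two numbers are different; checking this is a finite computation. \textbf{The main obstacle} I anticipate is the combinatorial/analytic bookkeeping in the \emph{unlabelled connected} case: correctly enumerating the top-of-cotree shapes when several large root-subtrees are isomorphic forces Pólya-style multiset generating functions, and extracting the $z\to\rho^u$ asymptotics of $S^u$ composed inside such a multiset construction (as opposed to a plain product) requires the more delicate singularity analysis for substitution, which is where errors creep in; a secondary difficulty is pinning down the join-connectivity formula with all degenerate cases (singletons, disconnected children) so that the list of shapes contributing to a given $\formerell$ is provably complete.
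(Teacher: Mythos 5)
Your overall strategy coincides with the paper's: encode the cograph by its canonical cotree, observe that $\{\kappa(\bm G_n)=\formerell\}$ is determined by the structure just below the root, and conclude by the symbolic method, singularity analysis and the transfer theorem, separately for the labeled series and for the P\'olya-type unlabeled series. However, there is a concrete gap at the key combinatorial step, precisely where the paper places \cref{lem:CalculerKappa}. In the \emph{canonical} cotree of a connected cograph, every child of the $\One$-decorated root is either a leaf or decorated $\Zero$; equivalently, every join-component $F_i$ is a singleton or a \emph{disconnected} cograph. Hence, for $G$ connected and not complete, one has deterministically $\kappa(G)=|G|-\max_i|F_i|$: no connectivity data of the children needs to be tracked. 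Your sketch instead stipulates that the large root-children are \emph{connected} cographs (with generating function $S(z)$) and proposes to dispose of their connectivity via the claim that a generic large child is ``as connected as possible, so effectively the large children contribute $|H_i|$''. That inference is backwards with respect to your own join formula: a child contributes $|H_i|-\kappa(H_i)$ to the maximum, so a highly connected large child would contribute little, and the event $\kappa(G)=\formerell$ would then require $\kappa(H_i)$ \emph{small}, not large. No probabilistic genericity statement is needed (nor correct as stated); what is needed is the deterministic structural fact above. As written, your list of ``top-of-cotree shapes'' (connected large children, unspecified connectivity, several possibly isomorphic large children requiring multiset bookkeeping) is not provably complete or correct -- this is exactly the degenerate-case trouble you yourself run into at $\formerell=1$, where in fact $\kappa(G)=1$ if and only if $G=K_1 * F$ with $F$ disconnected; your alternative ``other child connected with a cut vertex'' cannot occur in the canonical decomposition.

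Once the correct lemma is in place, the remaining analysis collapses to a single shape and your anticipated obstacles disappear: for $n>2\formerell$ the event is ``some (necessarily unique, since $n-\formerell>n/2$) root-subtree has $n-\formerell$ leaves'', so there is no Pólya multiset subtlety with several isomorphic large children, and the probability is an explicit coefficient ratio, e.g. $[z^{n-\formerell}]L(z)\,[z^{\formerell}](e^{L(z)}-1)\big/[z^{n}]L(z)$ in the labeled case and the analogue with $U$ and $2U(z)-z$ in the unlabeled case; the transfer theorem then yields $\pi_\formerell=\rho^{\formerell}[z^{\formerell}]M(z)$ and $\pi^u_\formerell=\rho_u^{\formerell}[z^{\formerell}]V(z)$ as in \cref{thm:DegreeConnectivity}. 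Also, your proposed tail estimate to get $\sum_\formerell\pi_\formerell=1$ is unnecessary: it follows from evaluating the series at their singularities, $M(\rho)=2L(\rho)-\rho=1$ and $V(\rho_u)=2U(\rho_u)-\rho_u=1$ (\cref{prop:Asympt_S_Seven,prop:Asympt_U}), and the two limit laws are then visibly different, e.g. $\pi_1=\rho=2\log 2-1\neq\rho_u=\pi^u_1$, so no separate finite computation is needed.
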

Formulas for $\pi_\formerell$ and $\pi_\formerell^u$ are given in \cref{thm:DegreeConnectivity}. 

\begin{remark}
A part of these results (\cref{th:MainTheorem}) has been independently derived in \cite{Benedikt} during the preparation of this paper. 
The proof method is however different. 
\end{remark}

\subsection{Proof strategy}

We first discuss the proof of \cref{th:MainTheorem}. 
For any graphs $g$ and $G$ of size $k$ and $n$ respectively, we denote by $\density(g,G)$ 
the number of copies of $g$ in $G$ as induced subgraphs normalized by $n^k$. 
Equivalently, let $\vec{V}^k$ be a $k$-tuple of i.i.d. uniform random vertices in $G$, then
$\density(g,G)= \mathbb{P}(\SubGraph(\vec{V}^k,G)=g)$, 
where $\SubGraph(I,G)$ is the subgraph of $G$ induced by the vertices of $I$. 
(All subgraphs in this article are induced subgraphs, and we sometimes omit the word ``induced''.) %

From a theorem of Diaconis and Janson~\cite[Theorem 3.1]{DiaconisJanson}, 
the graphon convergence of any sequence of random graphs $({\bm H}_n)$ is characterized 
by the convergence of $\esper[\density(g,{\bm H}_n)]$ for all graphs $g$. 
In the case of $\bm G_n$ (the uniform random labeled cographs of size $n$), for any graph $g$ of size $k$, we have 
\[
 \esper[\density(g,{\bm G}_n)] = \frac{\left|\left\{ (G,I) : \, {G=(V,E) \text{ labeled cograph of size } n, \atop I \in V^k \text{ and } \SubGraph(I,G)=g} \right\}\right|}
 {|\{ G \text{ labeled cograph of size } n \}| \cdot n^k}, 
\]
and a similar formula holds in the unlabeled case. 

Both in the labeled and unlabeled cases, the asymptotic behavior of the denominator 
follows from the encoding of cographs as cotrees, standard application of
the symbolic method of Flajolet and Sedgewick~\cite{Violet} and singularity analysis
(see \cref{prop:Asympt_S_Seven,prop:Asympt_U}). 
The same methods can be used to determine the asymptotic behavior of the numerator, 
counting cotrees with marked leaves inducing a given subtree. 
This requires more involved combinatorial decompositions, which are performed
in \cref{sec:proofLabeled,sec:unlabeled}.

We note that we already used a similar proof strategy in the framework of permutations in~\cite{Nous2}. 
The adaptation to the case of {\em labeled} cographs does not present major difficulties. 
The {\em unlabeled} case is however more subtle, since we have to take care of symmetries when marking leaves in cotrees (see the discussion in \cref{ssec:unlabeled_to_labeled} for details). 
We overcome this difficulty using the $n!$-to-$1$ mapping that maps 
a pair $(G,a)$ (where $G$ is a labeled cograph and $a$ an automorphism of $G$) to the unlabeled version of $G$. 
We then make combinatorial decompositions of such pairs $(G,a)$
with marked vertices inducing a given subgraph
(or more precisely, of the associated cotrees, with marked leaves inducing a given subtree).
Our analysis shows that symmetries have a negligeable influence
 on the asymptotic behavior of the counting series. 
This is similar -- though we have a different and more combinatorial presentation -- to the techniques
 developed in the papers~\cite{PanaStufler,GittenbergerPolya},
devoted to the convergence of unordered unlabeled trees to the \emph{Brownian Continuum Random Tree}. 
\medskip

With \cref{th:MainTheorem} in our hands, proving 
\cref{th:DegreeRandomVertex} amounts to proving that 
the intensity of the degree distribution of the Brownian cographon is the Lebesgue measure on $[0,1]$.
Rather than working in the continuous,
we exhibit a discrete approximation $\bm G_n^b$ of the Brownian cographon,
which has the remarkable property that the degree of a uniform random vertex 
in $\bm G_n^b$ is {\em exactly} distributed as a uniform random variable in $\{0,1,\cdots,n-1\}$.
The latter is proved by purely combinatorial arguments (see \cref{prop:IntensityW12}). 

\medskip

To prove \cref{thm:DegreeConnectivityIntro}, we start with a simple combinatorial lemma, 
which relates the vertex connectivity of a connected cograph to the sizes 
of the subtrees attached to the root in its cotree. 
Based on that, we can use again the symbolic method and singularity analysis as in the proof of \cref{th:MainTheorem}. 

\subsection{Outline of the paper}
\cref{sec:cotrees} explains the standard encoding of cographs by cotrees
and the relation between taking induced subgraphs and subtrees.
\cref{sec:Graphons} presents the necessary material on graphons;
results stated there are quoted from the literature, except the continuity of the degree distribution,
for which we could not find a reference.
\cref{sec:BrownianCographon} introduces the limit object of \cref{th:MainTheorem},
namely the Brownian cographon. It is also proved that the intensity of its degree distribution is uniform
(which is the key ingredient for \cref{th:DegreeRandomVertex}).
\cref{th:MainTheorem,th:DegreeRandomVertex} are proved in \cref{sec:proofLabeled} 
for the labeled case and in \cref{sec:unlabeled} for the unlabeled case.
Finally, \cref{thm:DegreeConnectivityIntro} is proved in \cref{sec:DegreeConnectivity}.

\section{Cographs, cotrees and induced subgraphs}
\label{sec:cotrees}
\subsection{Terminology and notation for graphs}
All graphs considered in this paper are \emph{simple} (\emph{i.e.} without multiple edges, nor loops) and not directed.
A {\em labeled graph} $G$ is a pair $(V,E)$,
where $V$ is its vertex set (consisting of distinguishable vertices, each identified by its label) 
and $E$ is its edge set.
Two labeled graphs $(V,E)$ and $(V',E')$ are isomorphic if there exists a bijection
from $V$ to $V'$ which maps $E$ to $E'$.
Equivalence classes of labeled graphs for the above relation are {\em unlabeled graphs}.

Throughout this paper, the \emph{size} of a graph is its number of vertices.
Note that there are finitely many unlabeled graphs with $n$ vertices,
so that the uniform random unlabeled graph of size $n$ is well defined. 
For labeled graphs, there are finitely many graphs with any given vertex set $V$.
Hence, to consider a uniform random labeled graph of size $n$, 
we need to fix a vertex set $V$ of size $n$.
The properties we are interested in do not depend on the choice of this vertex set, so that we can choose $V$ arbitrarily,
usually $V=\{1,\dots,n\}$.

As a consequence, considering a subset (say $\mathcal{C}$) of the set of all graphs, 
we can similarly define the \emph{uniform random unlabeled graph of size $n$ in $\mathcal{C}$} 
(resp. the uniform random labeled graph with vertex set $\{1,\dots,n\}$ in $\mathcal{C}$ -- 
which we simply denote by \emph{uniform random labeled graph of size $n$ in $\mathcal{C}$}). 
The restricted family of graphs considered in this paper is that of \emph{cographs}.

\subsection{Cographs and cotrees}
Let $G=(V,E)$ and $G'=(V',E')$ be labeled graphs with disjoint vertex sets.
We define their \emph{disjoint union} as the graph $(V \uplus V', E \uplus E')$ (the symbol $\uplus$ denoting as usual the disjoint union of two sets). 
We also define their \emph{join} as the graph
 $(V \uplus V', E \uplus E' \uplus (V \times V'))$: 
 namely, we take copies of $G$ and $G'$, 
 and add all edges from a vertex of $G$ to a vertex of $G'$.
 Both definitions readily extend to more than two graphs 
 (adding edges between any two vertices originating from different graphs in the case of the join operation).

\begin{definition}
A \emph{labeled cograph} is a labeled graph that can be generated from single-vertex graphs 
 applying join and disjoint union operations. An \emph{unlabeled cograph} is the underlying unlabeled graph of a labeled cograph.
\end{definition}

It is classical to encode cographs by their \emph{cotrees}. 

\begin{definition}
  \label{def:cotree}
A labeled \emph{cotree} of size $n$ is a rooted tree $t$ with $n$ leaves labeled from $1$ to $n$ such that
\begin{itemize}
\item $t$ is not plane (\emph{i.e.} the children of every internal node are not ordered);%
\item every internal node has at least two children;
\item every internal node in $t$ is decorated with a $\Zero$ or a $\One$.
\end{itemize}
An \emph{unlabeled cotree} of size $n$ is a labeled cotree of size $n$ where we forget the labels on the leaves.
\end{definition}

\begin{remark}
In the literature, cotrees are usually required to satisfy the property that decorations $\Zero$ and $\One$ 
should alternate along each branch from the root to a leaf. 
In several proofs, our work needs also to consider trees in which this alternation assumption is relaxed, 
hence the choice of diverging from the usual terminology. 
Cotrees which do satisfy this alternation property are denoted \emph{canonical cotrees} in this paper (see \cref{def:canonical_cotree}). 
\end{remark}

For an unlabeled cotree $t$, we denote by $\cograph(t)$ the unlabeled graph defined recursively as follows (see an illustration in \cref{fig:ex_cotree}):
\begin{itemize}
\item If $t$ consists of a single leaf, then $\cograph(t)$ is the graph with a single vertex.
\item Otherwise, the root of $t$ has decoration $\Zero$ or $\One$ and has subtrees $t_1$, \dots, $t_d$
attached to it ($d \ge 2$).
Then, if the root has decoration $\Zero$, we let $\cograph(t)$ be the {\em disjoint union}
of $\cograph(t_1)$, \dots, $\cograph(t_d)$. 
Otherwise, when the root has decoration $\One$,
 we let $\cograph(t)$ be the {\em join}
of $\cograph(t_1)$, \dots, $\cograph(t_d)$.
\end{itemize}
Note that the above construction naturally entails a one-to-one correspondence 
between the leaves of the cotree $t$ and the vertices of its associated graph $\cograph(t)$. 
Therefore, it maps the size of a cotree to the size of the associated graph. 
Another consequence is that we can extend the above construction to a \emph{labeled} cotree $t$, 
and obtain a \emph{labeled} graph (also denoted $\cograph(t)$), with vertex set $\{1,\dots,n\}$: 
each vertex of $\cograph(t)$ receives the label of the corresponding leaf of $t$. 

By construction, for all cotrees $t$, the graph $\cograph(t)$ is a cograph.
Conversely, each cograph can be obtained in this way, albeit not from a unique tree $t$.
It is however possible to find a canonical cotree representing a cograph $G$. 
This tree was first described in~\cite{corneil}. 
The presentation of~\cite{corneil}, although equivalent, is however a little bit different, 
since cographs are generated using exclusively ``complemented unions''
instead of disjoint unions and joins. 
The presentation we adopt has since been used in many algorithmic papers, see \emph{e.g.}~\cite{Habib,Bretscher}. 

\begin{definition}\label{def:canonical_cotree}
A cotree is \emph{canonical} 
if every child of a node decorated by $\Zero$ (resp. $\One$) is either decorated by $\One$ (resp. $\Zero$) or a leaf.
\end{definition}

\begin{proposition}\label{prop:canonical_cotree}
Let $G$ be a labeled (resp. unlabeled) cograph. 
Then there exists a unique labeled (resp. unlabeled) canonical cotree $t$ such that $\cograph(t)=G$. 
\end{proposition}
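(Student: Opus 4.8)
The plan is to prove existence and uniqueness separately, working throughout by induction on the size $n$ of the cograph.

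\textbf{Existence.} First I would recall that, by the second characterization in the introduction, $G$ is a cograph iff it can be built from single vertices via joins and disjoint unions, and translate this into: there is \emph{some} cotree $t$ with $\cograph(t) = G$. Starting from such a $t$, I would describe a normalization procedure turning it into a canonical one without changing the associated graph. The key local move: whenever a node $v$ decorated by $\Zero$ has a child $w$ also decorated by $\Zero$ (and symmetrically for $\One$), contract the edge $vw$, attaching all children of $w$ directly to $v$. One checks (using associativity of disjoint union, resp. of join) that this leaves $\cograph(t)$ unchanged, decreases the number of internal nodes, and still yields a valid cotree (every internal node still has $\geq 2$ children, since $w$ had $\geq 2$ children). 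Iterating until no such move applies produces a cotree in which no $\Zero$-node has a $\Zero$-child and no $\One$-node has a $\One$-child, i.e.\ a canonical cotree, with the same associated (labeled or unlabeled) graph $G$. Alternatively, and perhaps more cleanly, I would give a direct recursive construction: if $G$ has a single vertex, take the single-leaf cotree; if $G$ is disconnected with connected components $G_1,\dots,G_d$ ($d\geq 2$), recursively build canonical cotrees $t_i$ for each $G_i$, and form $t$ by a $\Zero$-root with subtrees $t_i$ — noting that each $G_i$ is connected, so each $t_i$ is either a leaf or has a $\One$-decorated root, making the result canonical; if $G$ is connected, then (a classical fact about cographs, proved easily using $P_4$-freeness or the join characterization) its complement $\overline G$ is disconnected, so apply the previous case to $\overline G$ and flip all decorations. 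This recursion terminates because each step strictly decreases the size.

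\textbf{Uniqueness.} Suppose $t$ and $t'$ are canonical cotrees with $\cograph(t)=\cograph(t')=G$; I would show $t=t'$ by induction on $n$. If $n=1$ both are the single leaf. If $n\geq 2$, the root of $t$ is decorated. If it is $\Zero$, then $G=\cograph(t)$ is the disjoint union of the $\cograph(t_i)$ over the root-subtrees $t_i$, and since each $t_i$ is canonical its root is either a leaf or decorated $\One$; in the latter case $\cograph(t_i)$ is a join of $\geq 2$ nonempty graphs, hence connected, and a single leaf also gives a connected (one-vertex) graph — so the $\cograph(t_i)$ are precisely the connected components of $G$. This recovers the partition of $V(G)$ into the leaf-sets of the $t_i$ intrinsically from $G$, and likewise forces the root decoration to be $\Zero$ exactly when $G$ is disconnected (with $\geq 2$ components). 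Symmetrically, if the root is $\One$ then $G$ is connected and the root-subtrees correspond to the connected components of the complement $\overline G$. In either case the root decoration of $t$ is determined by $G$ (disconnected $\Rightarrow \Zero$; connected $\Rightarrow \One$ — using that a cograph on $\geq 2$ vertices cannot have both $G$ and $\overline G$ connected), the multiset of root-subtrees of $t$ is determined by $G$ as the canonical cotrees of the components of $G$ or of $\overline G$, and each of those subtrees is canonical of size $<n$, so by the induction hypothesis it equals the corresponding subtree of $t'$. Since cotrees are non-plane, equality of the multisets of subtrees together with equality of root decorations gives $t=t'$. In the labeled case the same argument applies verbatim, the leaf-labels being carried along by the canonical correspondence between leaves and vertices.

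\textbf{Main obstacle.} The genuine content — and the only step requiring care — is the structural dichotomy that drives uniqueness: that for a cograph $G$ on at least two vertices, exactly one of $G$, $\overline G$ is disconnected, and that the decomposition ($G$ into its components when disconnected, $\overline G$ into its components when connected) exactly matches the top level of \emph{any} canonical cotree for $G$. This rests on the elementary but essential facts that no graph and its complement are both disconnected, and that a connected cograph has disconnected complement (equivalently, the join characterization). Once this ``modular decomposition at the root is canonical'' principle is in place, both existence (via the recursion) and uniqueness (via induction) are routine. I would state and briefly justify this dichotomy as the first lemma of the proof, then run the two inductions.
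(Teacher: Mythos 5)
Your proposal is correct and takes essentially the same route as the paper, which (citing Corneil--Lerchs--Stewart) sketches exactly this recursive construction --- a $\Zero$-root over the connected components of $G$ when $G$ is disconnected, a $\One$-root over the components of $\overline{G}$ when $G$ is connected --- justified by closure of cographs under induced subgraphs and complementation and by the connectivity dichotomy you isolate as your key lemma. Your uniqueness induction and the normalization alternative merely flesh out details the paper leaves implicit.
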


Example of cographs and their canonical cotree are given in \cref{fig:ex_cotree,fig:all_cographs_4}.

From a graph $G$, the canonical cotree $t$ such that $\cograph(t)=G$ is recursively built as follows. 
If $G$ consists of a single vertex, $t$ is the unique cotree with a single leaf. 
If $G$ has at least two vertices, we distinguish cases depending on whether $G$ is connected or not. 
\begin{itemize}
 \item If $G$ is not connected, the root of $t$ is decorated with $\Zero$ and the subtrees attached to it are the cographs associated with the connected components of $G$. 
 \item If $G$ is connected, the root of $t$ is decorated with $\One$ and the subtrees attached to it are the cographs associated with the induced subgraphs of $G$ 
whose vertex sets are those of the connected components of $\bar{G}$, where $\bar{G}$ is the complement of $G$ 
(graph on the same vertices with complement edge set). 
\end{itemize}
Important properties of cographs which justify the correctness of the above construction are the following: 
cographs are stable by induced subgraph and by complement, and a cograph $G$ of size at least two is not connected exactly when its complement $\bar{G}$ is connected. 

\begin{figure}[htbp]
\begin{center}
\includegraphics[width=8cm]{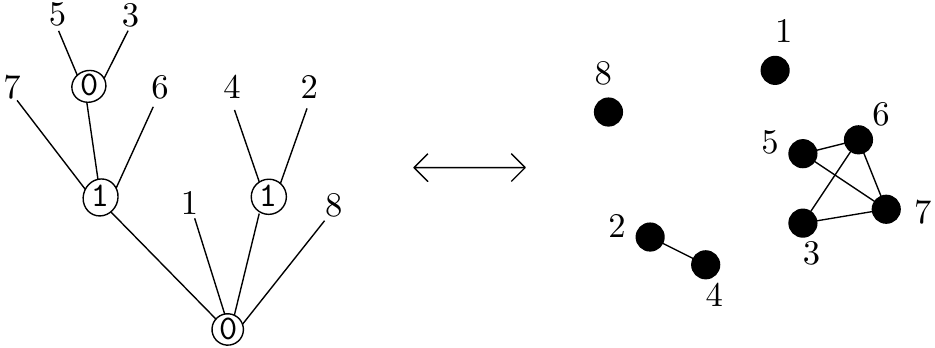}
\caption{Left: A labeled canonical cotree $t$ with $8$ leaves. Right: The associated labeled cograph $\cograph(t)$ of size $8$. }
\label{fig:ex_cotree}
\end{center}
\end{figure}

 \begin{figure}[htbp]
\begin{center}
\begin{tabular}[t]{| >{\centering\arraybackslash}m{40mm}  >{\centering\arraybackslash}m{20mm} >{\centering\arraybackslash}m{20mm}>{\centering\arraybackslash}m{20mm}>{\centering\arraybackslash}m{20mm}>{\centering\arraybackslash}m{20mm} |}
\hline
 & & & & & \\
Unlabeled canonical cotree   & \includegraphics[width=18mm]{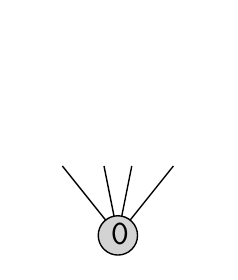} & \includegraphics[width=18mm]{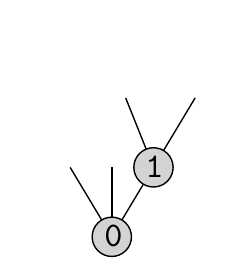} & \includegraphics[width=18mm]{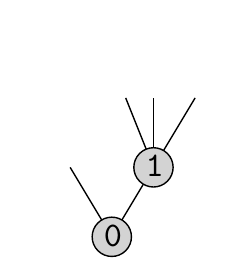} & \includegraphics[width=18mm]{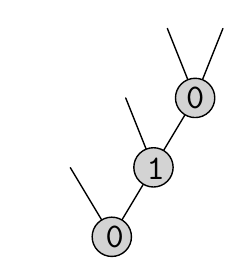} & \includegraphics[width=18mm]{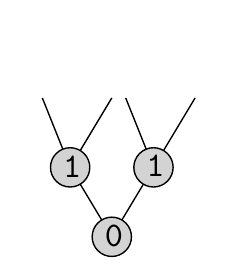}\\

Corresponding unlabeled cograph   & \includegraphics[width=18mm]{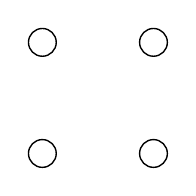} & \includegraphics[width=18mm]{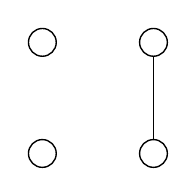} & \includegraphics[width=18mm]{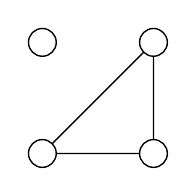} & \includegraphics[width=18mm]{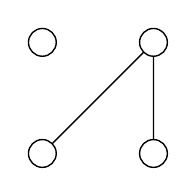} & \includegraphics[width=18mm]{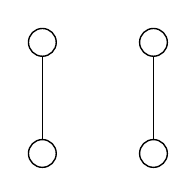}\\
Number of associated labeled cographs & $1$ & $6$ & $4$ & $12$ & $3$  \\
 & & & & & \\
\hline
 & & & & & \\
Unlabeled canonical cotree   & \includegraphics[width=18mm]{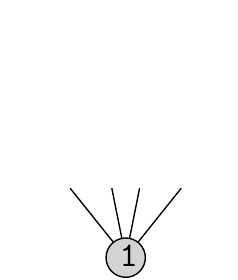} & \includegraphics[width=18mm]{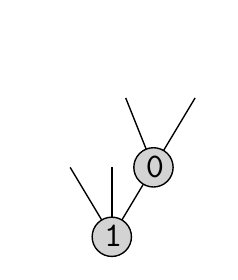} & \includegraphics[width=18mm]{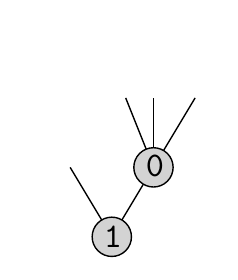} & \includegraphics[width=18mm]{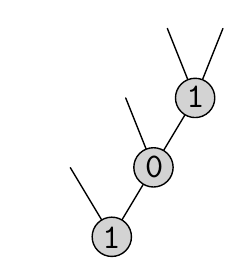} & \includegraphics[width=18mm]{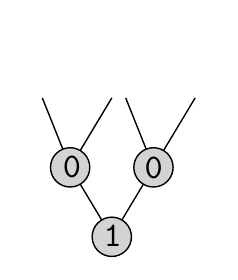}\\
Corresponding unlabeled cograph    & \includegraphics[width=18mm]{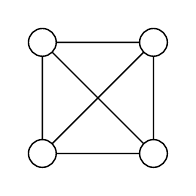} & \includegraphics[width=18mm]{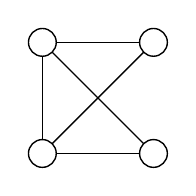} & \includegraphics[width=18mm]{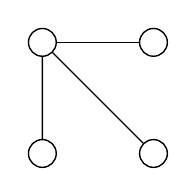} & \includegraphics[width=18mm]{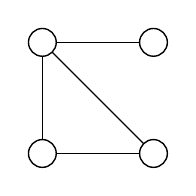} & \includegraphics[width=18mm]{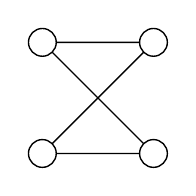}\\
Number of associated labeled cographs & $1$ & $6$ & $4$ & $12$ & $3$  \\
 & & & & & \\
\hline
\end{tabular}
\caption{All unlabeled cographs of size $4$ with their corresponding (unlabeled) canonical cotrees
and their number of distinct labelings.}
 \label{fig:all_cographs_4}
\end{center}
\end{figure}

\subsection{Subgraphs and induced trees}

Let $G$ be a graph of size $n$ (which may or not be labeled), and let $I=(v_1,\dots,v_k)$ be a $k$-tuple of vertices of $G$.
Recall that the \emph{subgraph of $G$ induced by $I$},
which we denote by $\SubGraph(I,G)$, is the graph with vertex set $\{v_1,\dots,v_k\}$ and which contains the edge $\{v_i,v_j\}$ 
if and only if $\{v_i,v_j\}$ is an edge of $G$. 
In case of repetitions of vertices in $I$, we take as many copies of each vertex as times it appears in $I$
and do not connect copies of the same vertex.
We always regard $\SubGraph(I,G)$ as an unlabeled graph.

In the case of cographs, the (induced) subgraph operation can also be realized on the cotrees,
through {\em induced trees}, which we now present.
We start with a preliminary definition.
\begin{definition}[First common ancestor]\label{dfn:common_ancestor}
Let $t$ be a rooted tree, and $u$ and $v$ be two nodes (internal nodes or leaves) of $t$. 
The \emph{first common ancestor} of $u$ and $v$ is the node furthest away from the root $\varnothing$ that appears 
on both paths from $\varnothing$ to $u$ and from $\varnothing$ to $v$ in $t$. 
\end{definition}

For any cograph $G$, and any vertices $i$ and $j$ of $G$, %
the following simple observation allows to read in any cotree encoding $G$ if $\{i,j\}$ is an edge of $G$.

\begin{observation}\label{obs:caract_edge-label1}
Let $i \neq j$ be two leaves of a cotree $t$ and $G=\cograph(t)$. 
We also denote by $i$ and $j$ the corresponding vertices in $G$. 
Let $v$ be the first common ancestor of $i$ and $j$ in $t$.
Then $\{i,j\}$ is an edge of $G$ if and only if $v$ has label $\One$ in $t$.
\end{observation}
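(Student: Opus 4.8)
The plan is to prove Observation~\ref{obs:caract_edge-label1} by induction on the size of the cotree $t$, following exactly the recursive structure of the map $\cograph$. First I would dispose of the base case: if $t$ has size $1$, there are no two distinct leaves, so the statement is vacuous. For the inductive step, suppose $t$ has root $r$ with decoration in $\{\Zero,\One\}$ and subtrees $t_1,\dots,t_d$ (with $d\ge 2$) attached to $r$, and let $i\neq j$ be two leaves of $t$. There are two cases depending on whether $i$ and $j$ lie in the same subtree.

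\emph{Case 1: $i$ and $j$ belong to the same subtree $t_m$.} Then the first common ancestor $v$ of $i$ and $j$ in $t$ lies inside $t_m$ and coincides with their first common ancestor in $t_m$; in particular its decoration is the same whether computed in $t$ or in $t_m$. Moreover, by the definition of $\cograph$, whether the root of $t$ is decorated $\Zero$ (disjoint union) or $\One$ (join), no edges are \emph{added} or \emph{removed} between two vertices already lying in the same block $\cograph(t_m)$: the induced subgraph of $\cograph(t)$ on the vertex set of $\cograph(t_m)$ is exactly $\cograph(t_m)$. Hence $\{i,j\}$ is an edge of $\cograph(t)$ if and only if it is an edge of $\cograph(t_m)$, and we conclude by applying the induction hypothesis to $t_m$.

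\emph{Case 2: $i$ and $j$ belong to distinct subtrees $t_m$ and $t_{m'}$ with $m\neq m'$.} Then the first common ancestor of $i$ and $j$ in $t$ is the root $r$ itself, so $v=r$. By the definition of $\cograph$, if $r$ is decorated $\One$ then $\cograph(t)$ is the join of $\cograph(t_1),\dots,\cograph(t_d)$, so every vertex of $\cograph(t_m)$ is adjacent to every vertex of $\cograph(t_{m'})$; in particular $\{i,j\}$ is an edge. Conversely, if $r$ is decorated $\Zero$ then $\cograph(t)$ is the disjoint union, so there is no edge between a vertex of $\cograph(t_m)$ and a vertex of $\cograph(t_{m'})$; in particular $\{i,j\}$ is not an edge. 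Thus $\{i,j\}$ is an edge of $\cograph(t)$ if and only if $v=r$ has decoration $\One$, which is the claim.

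I do not expect any serious obstacle here: the statement is essentially a reformulation of the recursive definition of $\cograph$, and the only point requiring a little care is the observation in Case~1 that the decoration of the first common ancestor is unambiguous (it does not matter whether we view $v$ as a node of $t$ or of $t_m$) together with the fact that passing from $t$ to $t_m$ does not disturb adjacencies \emph{within} $\cograph(t_m)$. Both facts are immediate from the definitions, so the proof is short.
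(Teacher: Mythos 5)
Your proof is correct, and it is precisely the routine verification that the paper leaves implicit: the observation is stated there without proof, being regarded as an immediate consequence of the recursive definition of $\cograph$, which is exactly the structure your induction follows (same-subtree case reduces to the subtree, different-subtree case is decided by the root's decoration). No gaps.
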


\begin{definition}[Induced cotree]\label{dfn:induced_cotree}
Let $t$ be a cotree (which may or not be labeled), and $I=(\ell_1,\dots,\ell_k)$ a $k$-tuple of distinct leaves of $t$, 
which we call the \emph{marked leaves} of $t$.
The \emph{tree induced by $(t,I)$}, denoted $t_I$, is the \textbf{always labeled} cotree of size $k$ defined as follows.
The tree structure of $t_I$ is given by
\begin{itemize}
\item the leaves of $t_I$ are the marked leaves of $t$;%
 \item the internal nodes of $t_I$ are the nodes of $t$ that are first common ancestors of two (or more) marked leaves; 
 \item the ancestor-descendant relation in $t_I$ is inherited from the one in $t$; 
 \item the decoration of an internal node $v$ of $t_I$ is inherited from the one in $t$; 
\item for each $i\leq k$, the leaf of $t_I$ corresponding to leaf $\ell_i$ in $t$ is labeled $i$ in $t_I$.
\end{itemize}
\end{definition}

We insist on the fact that we \textbf{always} define the induced cotree $t_I$ as a \textbf{labeled} cotree,
regardless of whether the original cotree $t$ is labeled or not.
The labeling of the induced cotree is related to the order of the marked leaves $I$ 
(and not to their labels in the case $t$ was labeled); 
this echoes the choice of $I$ as $k$-tuple (\emph{i.e.} an {\em ordered} collection) of distinct leaves.
A detailed example of the induced cotree construction is given in \cref{fig:SsArbreInduit}.

\begin{figure}[htbp]
    \begin{center}
      \includegraphics[width=12cm]{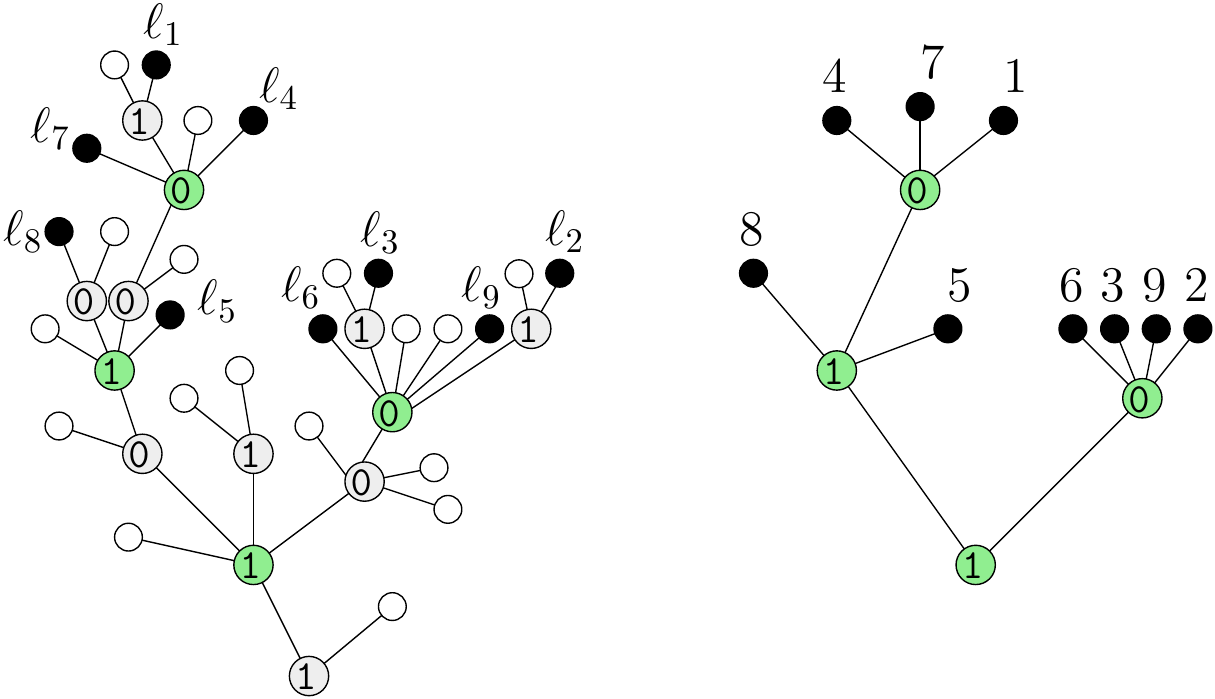}
    \end{center}
\caption{On the left: A cotree $t$ of size $n=26$, where leaves are indicated both by $\circ$ and $\bullet$. 
We also fix a $9$-tuple $I=(\ell_1,\dots,\ell_9)$ of marked leaves (indicated by $\bullet$). 
In green, we indicate the internal nodes of $t$ which are first common ancestors of these $9$ marked leaves.
On the right: The labeled cotree $t_I$ induced by the $9$ marked leaves.}
    \label{fig:SsArbreInduit}
\end{figure}
\begin{proposition}\label{prop:diagramme_commutatif}
Let $t$ be a cotree and $G=\cograph(t)$ the associated cograph.
Let $I$ be a $k$-tuple of distinct leaves in $t$, which identifies a $k$-tuple 
of distinct vertices in $G$.
Then, as unlabeled graphs, we have $\SubGraph(I,G) = \cograph(t_I)$.
\end{proposition}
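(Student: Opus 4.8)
The plan is to prove the statement by induction on the size $k$ of the tuple $I$, using \cref{obs:caract_edge-label1} to check edges on both sides. More precisely, both $\SubGraph(I,G)$ and $\cograph(t_I)$ are unlabeled graphs on $k$ vertices, and there is a natural bijection between their vertex sets: a marked leaf $\ell_i$ of $t$ corresponds on one hand to a vertex $v_i$ of $G$ (via the leaf--vertex correspondence of $\cograph$), hence to a vertex of $\SubGraph(I,G)$, and on the other hand to the leaf of $t_I$ labeled $i$, hence to a vertex of $\cograph(t_I)$. So it suffices to show that this bijection is a graph isomorphism, i.e. that for $i \neq j$, the pair $\{v_i,v_j\}$ is an edge of $\SubGraph(I,G)$ if and only if the corresponding pair is an edge of $\cograph(t_I)$.

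First I would handle the edge criterion in $G$: by definition of induced subgraph, $\{v_i,v_j\}$ is an edge of $\SubGraph(I,G)$ if and only if it is an edge of $G$, which by \cref{obs:caract_edge-label1} holds if and only if the first common ancestor of $\ell_i$ and $\ell_j$ in $t$ is decorated $\One$. Next I would apply \cref{obs:caract_edge-label1} again, this time to the cotree $t_I$ and the graph $\cograph(t_I)$: the pair is an edge of $\cograph(t_I)$ if and only if the first common ancestor of the leaves labeled $i$ and $j$ in $t_I$ is decorated $\One$. The heart of the argument is therefore the following structural fact about the induced cotree construction (\cref{dfn:induced_cotree}): the first common ancestor in $t_I$ of the leaves corresponding to $\ell_i$ and $\ell_j$ is exactly the node of $t$ that is the first common ancestor of $\ell_i$ and $\ell_j$ in $t$, and its decoration in $t_I$ is inherited unchanged from $t$. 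Given this, the two $\One$-conditions coincide and we are done; no induction is even strictly needed.

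I would prove this structural fact directly from \cref{dfn:induced_cotree}. Write $w$ for the first common ancestor of $\ell_i$ and $\ell_j$ in $t$; since $\ell_i \neq \ell_j$, $w$ is an internal node and is a first common ancestor of (at least) two marked leaves, so $w$ is a node of $t_I$, and its decoration there equals its decoration in $t$. Because the ancestor--descendant relation in $t_I$ is inherited from $t$, $w$ is a common ancestor in $t_I$ of the leaves corresponding to $\ell_i$ and $\ell_j$. It remains to see it is the \emph{first} (lowest) such: any node $w'$ of $t_I$ strictly below $w$ on the path to $\ell_i$ is, as a node of $t$, a strict descendant of $w$ on that path, hence not an ancestor of $\ell_j$ in $t$, hence not an ancestor in $t_I$ of the leaf corresponding to $\ell_j$. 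This shows $w$ is the first common ancestor in $t_I$ of the relevant leaves.

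The main (and really only) obstacle is keeping the bookkeeping of the two leaf--vertex correspondences straight — the identification of $V(\SubGraph(I,G))$ and $V(\cograph(t_I))$ through $I$ — and making sure the statement ``as unlabeled graphs'' is respected, i.e. that we are exhibiting an isomorphism rather than claiming equality of labeled objects. Once the structural fact about first common ancestors in $t_I$ is in place, the edge-by-edge verification via \cref{obs:caract_edge-label1} is immediate. One should also note in passing the degenerate case $k=1$ (and more generally tuples landing in a single subtree), where $t_I$ is a single leaf and both sides are the one-vertex graph, so the claim holds trivially; this requires no separate treatment beyond observing that there are no pairs $i\neq j$ to check.
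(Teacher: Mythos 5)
Your proof is correct and follows essentially the same route as the paper, which simply notes that the claim follows from \cref{obs:caract_edge-label1} together with the fact that the induced cotree construction preserves first common ancestors and their decorations. Your write-up just makes explicit the verification of that preservation fact and the bookkeeping of the leaf--vertex correspondences, which the paper leaves implicit.
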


\begin{proof}
This follows immediately from \cref{obs:caract_edge-label1} and the fact that the induced cotree construction (\cref{dfn:induced_cotree}) 
preserves first common ancestors and their decorations. 
\end{proof}

\begin{remark}
  The reader might be surprised by the fact that
  we choose induced subtrees to be labeled,
  while induced subgraphs are not.
  The reasons are the following.
  The labeling of induced subtrees avoids symmetry problems
  when decomposing cotrees with marked leaves inducing a given subtree,
  namely in \cref{th:SerieT_t0,th:V_t_0}.
  On the other hand, the theory of graphons 
  is suited to consider unlabeled subgraphs.
\end{remark}

%
%
%
%
%
%
%
%
%
%
%
%
%
%
%
%
%
%

%%%%%%%%%%%%%%%%%%%%%%%%%%%%%%%%%%%%%%%%%%%%%%%%%%%%%%%%%%%%%%%%%%%%%%%%%%%%%%%%%%%%%%%%%%%%%%%%%%%%%%%%%%%%%%%%%%%%
\section{Graphons}
\label{sec:Graphons} 
Graphons are continuum limit objects for sequences of graphs.
We present here the theory relevant to our work.
We recall basic notions from the literature,
following mainly Lovász' book \cite{LovaszBook},
then we recall results of Diaconis and Janson \cite{DiaconisJanson} 
regarding convergence of random graphs to random graphons. 
Finally, we prove a continuity result for the degree distribution with respect to graphon convergence.

The theory of graphons classically deals with unlabeled graphs.
Consequently,
unless specified otherwise, all graphs in this section are considered unlabeled.
When considering labeled graphs, graphon convergence is to be understood as
the convergence of their unlabeled versions.

\subsection{The space of graphons}\label{subsec:space_graphons}
\begin{definition}\label{defi:Graphon}
A graphon is an equivalence class of symmetric functions $[0,1]^2 \to [0,1]$,
under the equivalence relation $\sim$, where $w \sim u$ if 
there exists an invertible  measurable and Lebesgue measure preserving function $\phi:[0,1] \to [0,1]$
such that  $w(\phi(x),\phi(y)) = u(x,y)$ for almost every $x,y\in[0,1]$.
\end{definition}
Intuitively, a graphon is a continuous analogue of the adjacency matrix of a graph,
viewed up to relabelings of its continuous vertex set.
\begin{definition}\label{defi:Graphe->Graphon}
	The graphon $W_G$ associated to a labeled graph $G$ with $n$ vertices (labeled from $1$ to $n$) is the equivalence class of the function $w_G:[0,1]^2\to [0,1]$ where
	\[w_G(x,y) = A_{\lceil nx\rceil,\lceil ny\rceil} \in \{0,1\}\]
and $A$ is the adjacency matrix of the graph $G$.
\end{definition}
Since any relabeling of the vertex set of $G$ gives the same graphon $W_G$, 
the above definition immediately extends to unlabeled graphs. 

We now define the so-called {\em cut metric}, first on functions, and then on graphons.
We note that it is different than usual metrics on spaces
of functions ($L^1$, supremum norms, \ldots),
see \cite[Chapter 8]{LovaszBook} for details. 
For a real-valued symmetric function $w$ on $[0,1]^2$, its {\em cut norm} is defined as
\[ \| w \|_\Box = \sup_{S,T \subseteq [0,1]} \left| \int_{S \times T} w(x,y) dx dy \right| \]
Identifying as usual functions equal almost-everywhere, this is indeed a norm.
It induces the following {\em cut distance} on the space of graphons
\[ \dbox(W,W') = \inf_{w \in W,w' \in W'} \| w -w'\|_\Box.\]
While the symmetry and triangular inequalities are immediate,
this ``distance'' $\dbox$ {\em does not} separate points,
{\em i.e.} there exist different graphons at distance zero.
Call $\SpaceGraphon$ the space of graphons, quotiented by the equivalence
relation $W \equiv W'$ if $\dbox(W,W') =0$.
This is a metric space with distance $\dbox$.
This definition is justified by the following deep result, see,
  {\em e.g.}, \cite[Theorem 9.23]{LovaszBook}.
\begin{theorem}
  \label{thm:Graphons_Compact}
  The metric space $(\SpaceGraphon,\dbox)$ is compact.
\end{theorem}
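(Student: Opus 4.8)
\textbf{Proof plan for Theorem \ref{thm:Graphons_Compact}.}
The plan is to invoke the standard compactness theorem for the space of graphons, whose proof goes through two main ingredients: the weak regularity lemma of Frieze and Kannan, and a diagonal/subsequence extraction argument. Since the statement is quoted verbatim from \cite[Theorem 9.23]{LovaszBook}, I would in fact just cite it; but to indicate the structure of the argument, here is how it runs.

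First I would recall that it suffices to show sequential compactness, since $(\SpaceGraphon,\dbox)$ is a metric space. So fix an arbitrary sequence $(W_n)_{n\ge 1}$ of graphons; I want to extract a $\dbox$-convergent subsequence. The key tool is the \emph{weak regularity lemma}: for every $\eps>0$ and every graphon $W$, there is a step function (a graphon that is constant on the blocks of a partition of $[0,1]$ into at most $k(\eps)$ parts, with $k(\eps)$ depending only on $\eps$) that approximates $W$ within $\eps$ in cut distance. Applying this with $\eps = 1/m$ to each $W_n$, I obtain for each $m$ a step-function approximant $W_n^{(m)}$ with a bounded number of steps.

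Next I would run a diagonal extraction. For fixed $m$, the step functions $W_n^{(m)}$ are described by a partition of $[0,1]$ into at most $k(1/m)$ intervals (the partition can be taken into intervals after a measure-preserving rearrangement) together with the finitely many values on the blocks, all lying in the compact set $[0,1]$; hence the relevant data live in a compact space, and one can pass to a subsequence along which $W_n^{(m)}$ converges (in cut distance, using that convergence of the block-partition data implies cut-distance convergence of the step functions) to some limit graphon $U^{(m)}$. Iterating over $m$ and taking a diagonal subsequence $(W_{n_j})_j$, one gets a single subsequence along which $W_{n_j}^{(m)} \to U^{(m)}$ for every $m$. A short argument with the triangle inequality then shows $(U^{(m)})_m$ is Cauchy in $\dbox$; call its limit $U$. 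Finally, for this $U$ one checks $\dbox(W_{n_j},U)\to 0$ by splitting $\dbox(W_{n_j},U)\le \dbox(W_{n_j},W_{n_j}^{(m)}) + \dbox(W_{n_j}^{(m)},U^{(m)}) + \dbox(U^{(m)},U)$, choosing $m$ large to control the first and third terms (uniformly in $j$) and then $j$ large to control the middle term.

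The main obstacle — and the genuinely substantial input — is the weak regularity lemma itself (the bound $k(\eps)$ must depend on $\eps$ only, not on the graphon), together with the technical point that convergence of the ``profile'' of step functions with boundedly many steps actually implies convergence in the cut norm; both of these are exactly the content developed in \cite[Chapter 9]{LovaszBook}. Since the excerpt permits citing earlier material and this theorem is already attributed to Lovász' book, I would present the result as quoted and refer the reader there for the full proof, sketching only the subsequence-extraction skeleton above.
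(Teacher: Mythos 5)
The paper gives no proof of \cref{thm:Graphons_Compact}: it is quoted directly from the literature via the citation \cite[Theorem 9.23]{LovaszBook}, so your decision to simply cite it is exactly the paper's approach. Your sketch of the underlying argument also matches the standard proof in that reference, with the one caveat that the final step there is closed via martingale (almost-everywhere) convergence of the step-function approximants rather than by taking the limit of a $\dbox$-Cauchy sequence, since completeness of $(\SpaceGraphon,\dbox)$ is not available before compactness is established.
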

In the sequel, we think of graphons as elements in $\SpaceGraphon$ and
convergences of graphons are to be understood with respect to the distance $\dbox$.

\medskip

\subsection{Subgraph densities and samples}
~

An important feature of graphons is that one can 
extend the notion of density of a given subgraph $g$ in a graph $G$
to density in a graphon $W$.
Moreover, convergence of graphons turns to be equivalent to convergence of all subgraph densities.

To present this, we start by recalling from the introduction the definition of 
subgraph densities in graphs.
We recall that, if $I$ is a tuple of vertices of $G$,
then we write $\SubGraph(I,G)$ for the induced subgraph of $G$
on vertex set $I$. %

\begin{definition}[Density of subgraphs]
The density of an (unlabeled) graph $g$ of size $k$ in a graph $G$ of size $n$ (which may or not be labeled) is defined as follows:
let $\vec{V}^k$ be a $k$-tuple of i.i.d. uniform random vertices in $G$, then
\begin{align*}
\density(g,G) = \mathbb{P}(\SubGraph(\vec{V}^k,G)=g).
\end{align*}
\end{definition}

We now extend this to graphons.
Consider a graphon $W$ and one of its representatives $w$.
We denote by $\Sample_k(W)$ the unlabeled random graph built as follows: 
$\Sample_k(W)$ has vertex set $\{v_1,v_2,\dots,v_k\}$ and,
letting $\vec{X}^k=(X_1,\dots,X_k)$ be i.i.d. uniform random variables in $[0,1]$,
we connect vertices $v_i$ and $v_j$ with probability $w(X_i,X_j)$
(these events being independent, conditionally on $(X_1,\cdots,X_k)$).
Since the $X_i$'s are independent and uniform in $[0,1]$, 
the distribution of this random graph is the same
if we replace $w$ by a function $w' \sim w$ in the sense of 
\cref{defi:Graphon}.
It turns out that this distribution also stays the same if we replace $w$
by a function $w'$ such that $\|w-w'\|_\Box=0$ (it can be seen as a consequence of \cref{Thm:GraphonsCv_Densities} below),
so that the construction is well-defined on graphons.

\begin{definition} %
The density of a graph $g=(V,E)$ of size $k$ in a graphon $W$ is 
\begin{align*}
\density(g,W) &= \mathbb{P}(\Sample_k(W)=g)\\
&= \int_{[0,1]^k} \prod_{\{v,v'\} \in E} w(x_v,x_{v'}) \prod_{\{v,v'\} \notin E} (1-w(x_v,x_{v'})) \, \prod_{v \in V} dx_v,
\end{align*}
where, in the second expression, we choose an arbitrary representative $w$ in $W$.
\end{definition}
This definition extends that of the density of subgraphs in (finite) graphs in the following sense. For every finite graphs $g$ and $G$,
denoting by $\vec{V}^k$ a $k$-tuple of i.i.d. uniform random vertices in $G$,
$$
\density(g,W_G) = \mathbb{P}(\Sample_k(W_G)=g) = \mathbb{P}(\SubGraph(\vec{V}^k,G)=g) = \density(g,G).
$$
The following theorem is a prominent result in the theory of graphons,
see {\em e.g.} \cite[Theorem 11.5]{LovaszBook}.
\begin{theorem}
  \label{Thm:GraphonsCv_Densities}
  Let $W_n$ (for all $n \geq 0$) and $W$ be graphons. Then the following are equivalent:
	\begin{enumerate}%
		\item [(a)] $(W_n)$ converges to $W$ (for the distance $\dbox$);
        \item [(b)] for any fixed graph $g$, we have $\density(g,W_n) \to \density(g,W)$.
    \end{enumerate}
\end{theorem}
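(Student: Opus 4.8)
The plan is to deduce the equivalence from two ingredients: the \emph{counting lemma} for the cut distance, and the compactness of $(\SpaceGraphon,\dbox)$ recorded in \cref{thm:Graphons_Compact}. First I would reformulate condition (b) in terms of homomorphism densities. For a finite graph $h$, regarded as a graph on vertex set $[k]$, set $t(h,W)=\int_{[0,1]^k}\prod_{\{i,j\}\in E(h)}w(x_i,x_j)\,\prod_{i=1}^k dx_i$ for any representative $w$ of $W$ (this is well-defined on $\SpaceGraphon$: invariance under measure-preserving relabelings is a change of variables, and invariance under zero-cut-distance modifications follows from the counting lemma below). Expanding $\prod_{\{i,j\}\notin E(g)}\big(1-w(x_i,x_j)\big)$ in the integral defining $\density(g,W)$ gives, for $g$ viewed on vertex set $[k]$, the triangular identity $\density(g,W)=\sum_{h}(-1)^{|E(h)|-|E(g)|}t(h,W)$, the sum over graphs $h$ on $[k]$ with $E(h)\supseteq E(g)$; inverting this finite system yields conversely $t(h,W)=\sum_{g}\density(g,W)$, the sum over graphs $g$ on $[k]$ with $E(g)\supseteq E(h)$. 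Since these identities have coefficients independent of $n$, for a sequence $(W_n)$ one has $\density(g,W_n)\to\density(g,W)$ for every $g$ if and only if $t(h,W_n)\to t(h,W)$ for every $h$. So it suffices to prove that (a) is equivalent to: \emph{(b$'$)} $t(h,W_n)\to t(h,W)$ for every finite graph $h$.

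For \emph{(a)}~$\Rightarrow$~\emph{(b$'$)}, I would establish the counting lemma: $|t(h,W)-t(h,W')|\le |E(h)|\cdot\dbox(W,W')$ for all graphons $W,W'$ and all finite $h$. Take near-optimal representatives $w,w'$, list $E(h)=\{e_1,\dots,e_m\}$, and telescope by swapping $w$ for $w'$ one edge at a time; the $r$-th term is $\int_{[0,1]^k}\big(w'-w\big)(x_i,x_j)\,\varphi\,\prod_\ell dx_\ell$, where $\{i,j\}=e_r$ and $\varphi$ is a product of factors $w(x_a,x_b)$ or $w'(x_a,x_b)$ over edges $\ne e_r$. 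Freezing all variables except $x_i,x_j$, the factor $\varphi$ splits as $c\cdot a(x_i)\cdot b(x_j)$ with $a,b,c\in[0,1]$ (no other edge of the simple graph $h$ joins $i$ and $j$), so a layer-cake decomposition of $a$ and $b$ into indicators bounds $\big|\int (w'-w)(x_i,x_j)\,a(x_i)\,b(x_j)\,dx_i\,dx_j\big|$ by $\|w-w'\|_\Box$; integrating the frozen variables over a probability space keeps this bound, and summing over $r$ gives the lemma. Hence $\dbox(W_n,W)\to0$ implies $t(h,W_n)\to t(h,W)$ for every $h$.

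For \emph{(b$'$)}~$\Rightarrow$~\emph{(a)}, suppose $t(h,W_n)\to t(h,W)$ for all $h$ but $W_n\not\to W$. Then some subsequence stays at distance $\ge\eps>0$ from $W$, and by \cref{thm:Graphons_Compact} a further subsequence $(W_{n_k})$ converges to some graphon $W'$ with $\dbox(W',W)\ge\eps$, so $W'\ne W$ in $\SpaceGraphon$. The already-proved implication \emph{(a)}~$\Rightarrow$~\emph{(b$'$)} gives $t(h,W_{n_k})\to t(h,W')$ for every $h$, and comparison with the hypothesis forces $t(h,W')=t(h,W)$ for every finite $h$. Now I would invoke the uniqueness principle for graphons: if two graphons have the same homomorphism densities for all finite graphs, they coincide in $\SpaceGraphon$. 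This contradicts $W'\ne W$, finishing the proof.

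The inclusion–exclusion bookkeeping and the counting lemma are routine. The real obstacle is the uniqueness principle used at the very end: it is not a soft estimate but the hard core of the theory, proved via the weak regularity lemma of Frieze and Kannan — either by showing directly that a left-convergent sequence is $\dbox$-Cauchy, or through a martingale-convergence argument along the $\sigma$-algebras generated by sampling — see \cite[Chapter 11]{LovaszBook} and \cite{DiaconisJanson}. I would quote this statement rather than reprove it, so the bulk of the work above lies in making the counting lemma and the compactness reduction precise.
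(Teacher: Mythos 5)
Your argument is sound, but it is worth noting that the paper does not prove \cref{Thm:GraphonsCv_Densities} at all: it is quoted as background from \cite[Theorem 11.5]{LovaszBook}, so your proposal takes a genuinely different route simply by supplying an argument. What you do is the standard reduction: translate induced densities $\density(g,\cdot)$ into homomorphism densities $t(h,\cdot)$ via a finite triangular inclusion--exclusion system (correct, and the constant coefficients make the two convergence statements equivalent); prove the counting lemma $|t(h,W)-t(h,W')|\le |E(h)|\,\dbox(W,W')$ by telescoping over edges and the layer-cake bound against the cut norm (correct -- the key point that no factor other than the swapped edge depends on both $x_i$ and $x_j$ holds because $h$ is simple); and obtain the converse by sequential compactness of $(\SpaceGraphon,\dbox)$ from \cref{thm:Graphons_Compact} plus the uniqueness principle that equal homomorphism densities force cut distance zero. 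The gain of your route is transparency: it isolates exactly which deep inputs are needed (weak-regularity-based compactness, which the paper already states, and the moment-determinacy/uniqueness theorem), while the counting-lemma half becomes self-contained and even quantitative, in the same spirit as the paper's 2-Lipschitz bound in \cref{lem:cv-degree-distribution}. The cost is that the proof is still not self-contained where it matters most: the uniqueness principle you invoke at the end is essentially the hard half of \cite[Theorem 11.5]{LovaszBook} itself (it is not implied by compactness, which only says limits exist, not that they are determined by densities), so your proposal should be read as a correct reduction of the theorem to that quoted result rather than an independent proof -- a trade-off you acknowledge explicitly, and one that is no worse than the paper's own choice to cite the full statement. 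Two small bookkeeping points to make precise if you write this up: the paper's $\density(g,W)$ is an unlabeled (isomorphism-class) density, so your labeled system on vertex set $[k]$ should be related to it by the usual factor counting labeled copies of $g$; and well-definedness of $t(h,\cdot)$ on $\SpaceGraphon$ should be stated after the counting lemma, as you indicate, to avoid circularity.
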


Classically, when $(H_n)$ is a sequence of graphs, we say that $(H_n)$ converges to a graphon $W$ when $(W_{H_n})$ converges to $W$. 

\medskip

\subsection{Random graphons}
~

We now discuss convergence of a sequence of random graphs $\bm H_n$ (equivalently, of the associated random graphons $W_{\bm H_n}$)
towards a possibly random limiting graphon $\bm W$.
In this context, the densities $\density(g,\bm H_n)$ are random variables.
This was studied in \cite{DiaconisJanson} and it turns out that 
it is enough to consider the expectations $\esper\big[\density(g,\bm H_n)\big]$ and $\esper\big[\density(g,\bm W)\big]$ 
to extend \cref{Thm:GraphonsCv_Densities} to this random setting. 
Note first that $\esper\big[\density(g,\bm H_n)\big] = \proba(\SubGraph(\vec{V}^k,\bm H_n) = g)$, 
where both $\bm H_n$ and $\vec{V}^k$ are random, 
and that similarly $\esper\big[\density(g,\bm W)\big]= \proba(\Sample_k(\bm W)=g)$,
where the randomness comes both from $\bm W$ and the operation $\Sample_k$.

A first result states that the distributions of random graphons are characterized by expected subgraph densities.
\begin{proposition}[Corollary 3.3 of \cite{DiaconisJanson}]\label{Prop:CaracterisationLoiGraphon}
Let ${\bf W},{\bf W'}$ be two random graphons, seen as random variables in $\SpaceGraphon$.
The following are equivalent: 
\begin{itemize}
 \item ${\bf W}\stackrel{d}{=}{\bf W'}$;
 \item for every finite graph $g$, $\mathbb{E}[\density(g,{\bf W})] = \mathbb{E}[\density(g,{\bf W'})]$;
 \item for every $k\geq 1$, $\Sample_k(\bm W) \stackrel d = \Sample_k(\bm W')$.
\end{itemize}
\end{proposition}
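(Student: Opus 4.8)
The statement to prove is the equivalence, for two random graphons $\bm W, \bm W'$ seen as random variables in $\SpaceGraphon$, of: (i) $\bm W \stackrel d = \bm W'$; (ii) $\esper[\density(g,\bm W)] = \esper[\density(g,\bm W')]$ for every finite graph $g$; (iii) $\Sample_k(\bm W) \stackrel d = \Sample_k(\bm W')$ for every $k \ge 1$. The plan is to prove the cycle (i) $\Rightarrow$ (iii) $\Rightarrow$ (ii) $\Rightarrow$ (i), since (i) $\Rightarrow$ (iii) and (iii) $\Rightarrow$ (ii) are essentially immediate and all the real content sits in (ii) $\Rightarrow$ (i).

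First I would dispatch the easy implications. For (i) $\Rightarrow$ (iii): the map $W \mapsto \mathrm{law}(\Sample_k(W))$ is a well-defined measurable map from $\SpaceGraphon$ to the (finite, hence trivially nice) space of probability distributions on unlabeled graphs with $k$ vertices, by the discussion preceding the proposition; so if $\bm W \stackrel d = \bm W'$ then pushing forward gives $\Sample_k(\bm W) \stackrel d = \Sample_k(\bm W')$ for every $k$ (here one should note that the randomness of $\Sample_k$ given $W$ is generated by auxiliary i.i.d. uniforms independent of everything, so the two-stage law is determined by $\mathrm{law}(\bm W)$). For (iii) $\Rightarrow$ (ii): by definition $\esper[\density(g,\bm W)] = \proba(\Sample_k(\bm W) = g)$ when $g$ has $k$ vertices, and the right-hand side is read off from the distribution of $\Sample_k(\bm W)$; so equality of the latter distributions forces equality of the expected densities. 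These two steps are a couple of lines each and use only definitions already in the excerpt.

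The substantive step is (ii) $\Rightarrow$ (i). The natural route is: the quantities $\density(g, \cdot)$ for all finite $g$ separate points of $\SpaceGraphon$ (this is the content of \cref{Thm:GraphonsCv_Densities} — convergence in $\dbox$ is equivalent to convergence of all densities — together with the fact that $(\SpaceGraphon,\dbox)$ is a \emph{metric} space, so equal densities means $\dbox = 0$ means equal in $\SpaceGraphon$), and moreover they are continuous and bounded functions on the compact metric space $(\SpaceGraphon,\dbox)$ (\cref{thm:Graphons_Compact}). Hence the linear span of $\{W \mapsto \density(g,W) : g \text{ finite}\}$ is a point-separating subalgebra of $C(\SpaceGraphon)$ — one checks it is an algebra because a product $\density(g_1,W)\density(g_2,W)$ expands, via inclusion–exclusion over the edge-sets on the disjoint union of the two vertex sets, into a finite nonnegative combination of densities $\density(h,W)$ (this is the standard multiplicativity of homomorphism/sampling densities; it follows directly from the sampling description $\density(g,W) = \proba(\Sample_k(W)=g)$ by sampling $k_1 + k_2$ independent points). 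By Stone–Weierstrass, this span is dense in $C(\SpaceGraphon)$. Therefore a Borel probability measure on $\SpaceGraphon$ is determined by the integrals $\int \density(g,W)\, d\mu(W) = \esper[\density(g,\bm W)]$; if these agree for $\bm W$ and $\bm W'$ for all $g$, then $\int f\, d\,\mathrm{law}(\bm W) = \int f\, d\,\mathrm{law}(\bm W')$ for all $f \in C(\SpaceGraphon)$, and by the Riesz representation theorem (or just the fact that $C(X)$ determines Borel probability measures on a compact metric space $X$) we get $\mathrm{law}(\bm W) = \mathrm{law}(\bm W')$, i.e. $\bm W \stackrel d = \bm W'$.

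I expect the main obstacle to be the careful verification that densities form a point-separating \emph{subalgebra} of $C(\SpaceGraphon)$ — specifically, writing out the product formula $\density(g_1,W)\,\density(g_2,W) = \sum_h c_h\, \density(h,W)$ cleanly (summing over graphs $h$ on $k_1+k_2$ labeled vertices whose restrictions to the two blocks are $g_1$ and $g_2$, with appropriate combinatorial weights, then re-expressing in terms of unlabeled densities), and confirming point-separation rigorously from \cref{Thm:GraphonsCv_Densities}. The measurability of $W \mapsto \density(g,W)$ and of $W \mapsto \mathrm{law}(\Sample_k(W))$, needed to make sense of the pushforwards and expectations, is a minor technical point that follows from continuity of $\density(g,\cdot)$. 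Everything else is bookkeeping with definitions already recalled in the excerpt; since this is literally a cited corollary I would keep the write-up to a short paragraph per implication and point to \cite{DiaconisJanson} for the details of the Stone–Weierstrass argument.
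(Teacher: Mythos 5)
Your proposal is correct, but note that the paper does not prove this statement at all: it is quoted verbatim as Corollary 3.3 of \cite{DiaconisJanson}, so there is no internal proof to compare against. What you give is a genuine self-contained argument, and it holds up: the easy implications (i)\,$\Rightarrow$\,(iii)\,$\Rightarrow$\,(ii) follow from the identity $\esper[\density(g,\bm W)]=\proba(\Sample_k(\bm W)=g)$ already recorded in the paper, and your (ii)\,$\Rightarrow$\,(i) via Stone--Weierstrass is sound: each $\density(g,\cdot)$ is continuous on $(\SpaceGraphon,\dbox)$ and these functionals separate points (both read off from \cref{Thm:GraphonsCv_Densities}, the latter by applying it to a constant sequence), the space is compact (\cref{thm:Graphons_Compact}), and the product formula $\density(g_1,W)\density(g_2,W)=\sum_h c_h\,\density(h,W)$ does hold because, for a fixed graphon, the induced subgraphs of $\Sample_{k_1+k_2}(W)$ on the two blocks are independent and the labeled sampling probabilities are exchangeable, so the coefficients $c_h$ are nonnegative rationals independent of $W$; two Borel probability measures on a compact metric space agreeing on a dense subalgebra of $C(\SpaceGraphon)$ coincide. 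One small point worth making explicit in a write-up: the subalgebra contains the constants (the density of the one-vertex graph is identically $1$), which Stone--Weierstrass requires. This route differs from Diaconis and Janson's own treatment, which derives the corollary from their Theorem 3.1 using exchangeable random graphs and the Lov\'asz--Szegedy uniqueness theory; your argument trades that machinery for compactness of the graphon space plus a moment-method/Stone--Weierstrass step, which is arguably more elementary given the results the paper already quotes, at the cost of the combinatorial bookkeeping in the product expansion.
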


The next result, which is essentially \cite[Theorem 3.1]{DiaconisJanson},
characterizes the convergence in distribution of random graphs to random graphons.
\begin{theorem}
\label{th:Portmanteau}
	For any $n$, let $\bm H_n$ be a random graph of size $n$. Denote by $W_{\bm H_n}$ the graphon associated to ${\bm H_n}$ by \cref{defi:Graphe->Graphon}.
	The following assertions are equivalent:
	\begin{enumerate}%
		\item [(a)] The sequence of random graphons $(W_{\bm H_n})_n$ converges in distribution to some random graphon $\bm W$. 
		\item [(b)] The random infinite vector $\big(\density(g,\bm H_n)\big)_{g\text{ finite graph}}$ converges in distribution in the product topology to some random infinite vector $(\bm \Lambda_g)_{g\text{ finite graph}}$. 
		\item [(c)]For every finite graph $g$, there is a constant $\Delta_g \in [0,1]$ such that \[\mathbb{E}[\density(g,\bm H_n)] \xrightarrow{n\to\infty} \Delta_g.\]

        \item [(d)] For every $k\geq 1$, denote by $\vec{V'}^k=(V'_1,\dots,V'_k)$ a uniform $k$-tuple of {\em distinct} vertices of $\bm H_n$.
Then the induced graph $\SubGraph(\vec{V'}^k,\bm H_n)$ converges in distribution to some random graph $\bm g_k$.
	\end{enumerate}
Whenever these assertions are verified, we have 
\begin{equation}
  \label{eq:Lambda_Density}
(\bm \Lambda_g)_{g\text{ finite graphs}} \stackrel d = (\density(g,\bm W))_{g\text{ finite graphs}}.
\end{equation}
and, for every graph $g$ of size $k$,
\begin{equation}
  \Delta_g = \esper[\bm \Lambda_g] = \esper[\density(g,\bm W)]= \proba(\bm g_k = g). 
  \label{eq:Delta_Esper}
\end{equation}
\end{theorem}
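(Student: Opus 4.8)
The plan is to build on the deterministic equivalence of \cref{Thm:GraphonsCv_Densities} and the compactness of $(\SpaceGraphon,\dbox)$ from \cref{thm:Graphons_Compact}, essentially reproducing the argument of \cite[Theorem 3.1]{DiaconisJanson} while adding the bookkeeping needed to incorporate assertion (d) and to read off the identities \eqref{eq:Lambda_Density} and \eqref{eq:Delta_Esper}. First I would record that $\density(g,\bm H_n) = \density(g,W_{\bm H_n})$ for every finite graph $g$, so that the infinite vector in (b) is the image of $W_{\bm H_n}$ under the map $\Phi \colon \SpaceGraphon \to [0,1]^{\{g\}}$, $W \mapsto (\density(g,W))_g$. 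By \cref{Thm:GraphonsCv_Densities}, $\Phi$ is a continuous injection of the compact metric space $\SpaceGraphon$ into the compact metrizable space $[0,1]^{\{g\}}$ with the product topology, hence a homeomorphism onto its (compact, thus closed) image $K=\Phi(\SpaceGraphon)$. Since $\SpaceGraphon$ is compact the sequence $(W_{\bm H_n})$ is automatically tight, and as each $\Phi(W_{\bm H_n})$ lies in the closed set $K$, any subsequential limit of $(\Phi(W_{\bm H_n}))$ is supported on $K$; the continuous mapping theorem applied to $\Phi$ and to $\Phi^{-1}\colon K\to\SpaceGraphon$ then gives the equivalence (a) $\Leftrightarrow$ (b), with $\bm\Lambda=\Phi(\bm W)$, which is exactly \eqref{eq:Lambda_Density}.

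Next, (b) $\Rightarrow$ (c) is immediate because each $\density(g,\bm H_n)$ is $[0,1]$-valued, so convergence in distribution forces convergence of expectations, giving $\Delta_g=\esper[\bm\Lambda_g]$. For (c) $\Rightarrow$ (b) I would run the moment method on the compact space $[0,1]^{\{g\}}$: for every graph $G$, a finite product $\prod_i \density(g_i,G)$ is a finite nonnegative linear combination, with coefficients depending only on the $g_i$, of densities $\density(g,G)$ with $|g|=\sum_i |g_i|$, obtained by realizing the joint sampling of the $g_i$'s inside a single $k$-sample and summing over the ways the vertex blocks overlap. Hence (c) implies convergence of all joint moments of $(\density(g,\bm H_n))_g$; as this vector takes values in a fixed compact set, convergence of all moments implies convergence in distribution to a limit $\bm\Lambda$ supported on $K$, and $\bm W:=\Phi^{-1}(\bm\Lambda)$ is the sought random graphon. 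This is the argument of \cite[Theorem 3.1]{DiaconisJanson}, which I would cite; I expect the only delicate point to be this product-expansion identity together with the verification that the moment problem on $[0,1]^{\{g\}}$ is determinate — and that is precisely the part one imports from \cite{DiaconisJanson}.

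Finally I would treat assertion (d). Fix $k$, let $\vec V^k$ be i.i.d.\ uniform vertices of $\bm H_n$ and $\vec{V'}^k$ a uniform $k$-tuple of distinct vertices of $\bm H_n$. Conditionally on $\bm H_n$, the coordinates of $\vec V^k$ are pairwise distinct with probability $1-O(k^2/n)$, and on that event $\vec V^k$ is distributed as $\vec{V'}^k$; hence the total variation distance between the laws of $\SubGraph(\vec V^k,\bm H_n)$ and $\SubGraph(\vec{V'}^k,\bm H_n)$ is $O(k^2/n) = o(1)$, so that for every graph $g$ of size $k$,
\[
\proba\big(\SubGraph(\vec{V'}^k,\bm H_n)=g\big) = \proba\big(\SubGraph(\vec V^k,\bm H_n)=g\big) + o(1) = \esper[\density(g,\bm H_n)] + o(1).
\]
Since $\SubGraph(\vec{V'}^k,\bm H_n)$ takes values in the finite set of graphs on $k$ vertices, its convergence in distribution is equivalent to the convergence of these probabilities for all $g$ of size $k$; ranging over all $k$, this is exactly assertion (c). This yields (c) $\Leftrightarrow$ (d) together with $\proba(\bm g_k=g)=\Delta_g$.

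It remains to assemble \eqref{eq:Delta_Esper}: $\esper[\bm\Lambda_g]=\esper[\density(g,\bm W)]$ follows from \eqref{eq:Lambda_Density}, and $\esper[\density(g,\bm W)]=\proba(\Sample_k(\bm W)=g)$ is the definition of the density of $g$ in a graphon (via Fubini on an arbitrary representative). Equivalently, one may observe that $\bm g_k$ and $\Sample_k(\bm W)$ have the same distribution by \cref{Prop:CaracterisationLoiGraphon}. This closes the cycle of equivalences and establishes all the claimed identifications.
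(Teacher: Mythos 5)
Your proposal is correct and follows essentially the same route as the paper: the equivalences (a)\,$\Leftrightarrow$\,(b)\,$\Leftrightarrow$\,(c) and the first identities are delegated to Diaconis--Janson (which you partly re-derive via compactness of $(\SpaceGraphon,\dbox)$ and the moment method, but still citing them for the key determinacy step), and (c)\,$\Leftrightarrow$\,(d) is obtained by comparing sampling with i.i.d.\ versus distinct vertices. The only cosmetic difference is that the paper phrases this last step through injective densities, citing the standard fact that $\esper[\density(g,\bm H_n)]$ and $\esper[\density^{\text{inj}}(g,\bm H_n)]$ have the same limits, whereas you prove that fact directly by the $O(k^2/n)$ total-variation coupling; the content is the same.
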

Using the identity $\esper\big[\density(g,\bm W)\big]= \proba(\Sample_k(\bm W)=g)$,
we note that \cref{eq:Delta_Esper} implies that, for all $k \ge 1$, we have
\begin{equation}
  \Sample_k(\bm W) \stackrel d = \bm g_k 
  \label{eq:SampleK_gK}
\end{equation}

\begin{proof}
  The equivalence of the first three items, \cref{eq:Lambda_Density}
  and the first two equalities in \cref{eq:Delta_Esper} are all proved in \cite{DiaconisJanson}; see Theorem 3.1 there.
  Thus, we only prove (c) $\Leftrightarrow$ (d) and the related equality
  $\mathbb{P}(\bm g_k = g)=\Delta_g$.
  
  For any graphs $g,G$ of respective sizes $k\leq n$,  we define their \emph{injective density} 
  $ \density^{\text{inj}}(g,G) = \mathbb{P}(\SubGraph(\vec{V'}^k,G)=g)$ where $\vec{V'}^k$ is a uniform $k$-tuple of {\em distinct} vertices of $G$.
  As explained in~\cite{DiaconisJanson} (and standard in the graphon literature),
  Assertion (c) is equivalent, for the same limits $(\Delta_g)$, to its analogue with injective densities, 
  which is: for every graph $g$,
  \begin{equation}  
    \mathbb{E}[\density^{\text{inj}}(g,\bm H_n)] \xrightarrow{n\to\infty} \Delta_g.
    \label{eq:conv_Inj_Dens} 
  \end{equation} 
    Moreover, we note that, if $(\bm H_n)$ is a sequence of random graphs, then, for any graph $g$ of size $k$, 
    \begin{equation}   \label{eq:esper_Dens_Inj}   
    \esper \big[ \density^{\text{inj}}(g,\bm H_n) \big]   = \mathbb{P}(\SubGraph(\vec{V'}^k,\bm H_n)=g),
  \end{equation}
  where both $\vec{V'}^k$ and $\bm H_n$ are random. 
  Since $\SubGraph(\vec{V'}^k,\bm H_n)$ takes value in a finite set, its convergence in distribution (Assertion (d)) is equivalent to the convergence of its point probabilities, {\em i.e.} of the right-hand side of \cref{eq:esper_Dens_Inj}.
  Recalling \cref{eq:conv_Inj_Dens}, this proves the equivalence of Assertions (c) and (d).
  Futhermore, when these assertions hold, we have
  \[\mathbb{P}(\bm g_k = g)= \lim_{n \to \infty} \mathbb{P}\big[ \SubGraph(\vec{V'}^k,\bm H_n)=g \big] = \lim_{n \to \infty} \esper \big[ \density^{\text{inj}}(g,\bm H_n) \big] = \Delta_g,\]
  as wanted.
  \end{proof}
We finally collect an immediate corollary.
\begin{lemma}\label{lem:sample_converges_to_W}
	If $\bm W$ is a random graphon, then $W_{\Sample_n(\bm W)}$ converges in distribution to $\bm W$ as $n\to\infty$.
\end{lemma}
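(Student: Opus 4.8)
The plan is to deduce \cref{lem:sample_converges_to_W} directly from \cref{th:Portmanteau} applied to the sequence of random graphs $\bm H_n := \Sample_n(\bm W)$. The point is that for this particular sequence we can identify explicitly the limiting objects appearing in Assertion (c) (equivalently (d)), and they turn out to be the subgraph densities of $\bm W$ itself, so that by the equivalence (a) $\Leftrightarrow$ (c) and \cref{eq:Lambda_Density} the graphons $W_{\Sample_n(\bm W)}$ converge in distribution to $\bm W$.

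First I would check Assertion (c): for a fixed finite graph $g$ of size $k$, compute $\esper[\density(g,\Sample_n(\bm W))]$ and show it converges to $\esper[\density(g,\bm W)]$. The natural way is via the injective-density reformulation \eqref{eq:conv_Inj_Dens}: by \eqref{eq:esper_Dens_Inj}, $\esper[\density^{\text{inj}}(g,\Sample_n(\bm W))]$ is the probability that the subgraph of $\Sample_n(\bm W)$ induced by $k$ uniform \emph{distinct} vertices equals $g$. Conditionally on $\bm W$ and on the i.i.d. uniform sampling points $X_1,\dots,X_n$ used to build $\Sample_n(\bm W)$, selecting $k$ distinct vertices uniformly at random selects $k$ of the points $X_i$; since the $X_i$ are i.i.d. uniform, a uniformly chosen $k$-subset of them is again distributed as $k$ i.i.d. uniform points in $[0,1]$ (the ``distinct'' constraint is almost surely automatic and has no effect in the limit — more carefully, one can either note the points are a.s. distinct, or pass through the standard fact recalled in the proof of \cref{th:Portmanteau} that injective and ordinary densities have the same limit). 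Hence the induced subgraph on these $k$ vertices is exactly distributed as $\Sample_k(\bm W)$, so $\esper[\density^{\text{inj}}(g,\Sample_n(\bm W))] \to \proba(\Sample_k(\bm W)=g) = \esper[\density(g,\bm W)]=:\Delta_g$ — in fact, for $n \ge k$ it is already \emph{equal} to this quantity, so no genuine limit is even needed.

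Then, with Assertion (c) verified and $\Delta_g = \esper[\density(g,\bm W)]$, the equivalence (c) $\Rightarrow$ (a) of \cref{th:Portmanteau} gives that $(W_{\Sample_n(\bm W)})_n$ converges in distribution to \emph{some} random graphon $\bm W'$, and by \eqref{eq:Lambda_Density}–\eqref{eq:Delta_Esper} this limit satisfies $\esper[\density(g,\bm W')] = \Delta_g = \esper[\density(g,\bm W)]$ for every finite graph $g$. By \cref{Prop:CaracterisationLoiGraphon} (the characterization of the distribution of a random graphon by its expected subgraph densities), this forces $\bm W' \stackrel{d}{=} \bm W$. Therefore $W_{\Sample_n(\bm W)} \to \bm W$ in distribution, which is exactly the claim.

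I do not expect a serious obstacle here; this is a short deduction from results already stated. The only point requiring a little care is the passage from ``$k$ distinct vertices among $X_1,\dots,X_n$'' to ``$k$ i.i.d. uniform points'', i.e. justifying that the discreteness of the vertex set and the distinctness constraint wash out. This is handled cleanly by working with injective densities as in the proof of \cref{th:Portmanteau}: the subgraph induced by a uniform $k$-tuple of distinct vertices of $\Sample_n(\bm W)$ is, conditionally on $\bm W$ and the $X_i$'s, the subgraph on a uniform $k$-subset of the $X_i$'s, whose law (again conditionally) is that of a $\Sample$ operation on $k$ i.i.d. uniform points — the subsampling of an i.i.d. sequence being i.i.d. — so the identity with $\Sample_k(\bm W)$ is exact for $n\ge k$ and the conclusion follows.
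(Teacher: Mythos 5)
Your proposal is correct and follows essentially the same route as the paper: the key step in both is the exact distributional identity that the subgraph of $\Sample_n(\bm W)$ induced by a uniform $k$-tuple of distinct vertices is distributed as $\Sample_k(\bm W)$ (because subsampling i.i.d.\ uniform points yields i.i.d.\ uniform points), followed by an application of \cref{th:Portmanteau} and identification of the limit via \cref{Prop:CaracterisationLoiGraphon}. The only cosmetic difference is that you verify Assertion (c) through injective densities while the paper invokes Assertion (d) directly, which amounts to the same argument since the paper's proof of (c) $\Leftrightarrow$ (d) passes through the same injective-density reformulation.
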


\begin{proof}
 Recall that $\Sample_n(\bm W)$ is the random graph on vertex set $\{v_1,\cdots,v_n\}$
 obtained by taking $X_1$, \ldots, $X_n$ i.i.d. uniform in $[0,1]$ and joining $v_i$ and $v_j$
 with probability $w(X_i,X_j)$ where $w$ is a representative of $W$.
 Fix $k$ in $\{1,\cdots,n\}$. 
As in the previous theorem, let $\vec{V'}^k=(v_{h_1},\cdots,v_{h_k})$
 be a uniform random $k$-tuple of distinct vertices of $\Sample_n(\bm W)$.
 Then $\SubGraph(\vec{V'}^k,\Sample_n(\bm W))$ is the random graph on vertex set $\{v_{h_1},\cdots,v_{h_k}\}$
  obtained by taking $X_{h_1}$, \ldots, $X_{h_k}$ i.i.d. uniform in $[0,1]$ and joining $v_{h_i}$ and $v_{h_j}$
  with probability $w(X_{h_i},X_{h_j})$.
 Up to renaming $v_{h_i}$ as $v_i$ and $X_{h_i}$ as $X_i$, this matches the construction of $\Sample_k(\bm W)$.
  Therefore we have the following equality in distribution of random unlabeled graphs:
	\[\SubGraph(\vec{V'}^k,\Sample_n(\bm W)) \stackrel d= \Sample_k(\bm W).\]
    Thus, Assertion (d) of \cref{th:Portmanteau} is fulfilled for the graph sequence $(\Sample_n(\bm W))_n$
	and for $\bm g_k \stackrel d = \Sample_k(\bm W)$.
	Therefore Assertion (a) holds and the graphon sequence $(W_{\Sample_n(\bm W)})_n$
    has a limit in distribution $\bm W'$, which satisfies, for all $k$ (see \cref{eq:SampleK_gK}):
	\[\Sample_k(\bm W') \stackrel d = \bm g_k \stackrel d = \Sample_k(\bm W). \]
	From \cref{Prop:CaracterisationLoiGraphon}, we have $\bm W  \stackrel d= \bm W'$,
	concluding the proof of the lemma.
\end{proof}

\subsection{Graphons and degree distribution}
In this section, we consider the degree distribution of a graphon,
as introduced by Diaconis, Holmes and Janson in \cite[Section 4]{diaconis2008threshold}.
This defines a continuous functional from the space of graphons to that
of probability measures on $[0,1]$ (equipped with the weak topology)
\cite[Theorem 4.2]{diaconis2008threshold}.
For self-completeness we provide a proof here of this simple fact.
Our approach is different from that of Diaconis, Holmes and Janson:
we prove that the functional is 2-Lipschitz with respect to natural metrics,
while they express moments of the degree distribution
in terms of subgraph densities to prove the continuity.

At the end of the section, we also discuss the degree distribution of random graphs
and random graphons. This is a preparation for \cref{ssec:degdistrib}  
where we shall study the degree distribution of random cographs and of the Brownian cographon. 
We also note that other works \cite{BickelChenLevina,DegreeGraphonsDelmas}
study of the degree distributions of specific random graph models
and their convergence to that of their graphon limit.

The {\em degree distribution} of a graphon $W$ is the measure $D_{W}$ on $[0,1]$
defined as follows (see \cite[Theorem 4.4]{diaconis2008threshold}):
 for every continuous bounded function $f : [0,1] \to \R $,
we have
\[ \int_{[0,1]}f(x) D_{W}(dx) = \int_{[0,1]} f\left( \int_{[0,1]} w(u,v)dv \right) du,\]
where $w$ is, as usual, an arbitrary representative of $W$ 
(the resulting measure does not depend on the chosen representative).

For the graphon $W_G$ associated to a graph $G$ of size $n$,
the measure $D_{W_G}$ is simply the empirical distribution of the rescaled degrees:
\[D_{W_G} = \frac 1 {n}\sum_{v\in G} \delta_{\deg_G(v)/n}\]
where $\delta_u$ is the Dirac measure concentrated at $u$.

The next lemma implies that graphon convergence entails weak convergence of degree distributions.
To state a more precise result,
it is useful to endow the space $\mathcal M_1([0,1])$ of Borel probability measures 
on $[0,1]$ with the so-called Wasserstein metric
(see {\em e.g.} \cite[Section 1.2]{Ross_Survey_Stein}), defined as
\[d_{\mathrm{Wass}}(\mu,\nu)= \inf_f \left|\int_{[0,1]}f(x) \mu(dx) - \int_{[0,1]}f(x) \nu(dx)\right|, \]
where the infimum runs over all $1$-Lipschitz functions $f$ from $[0,1]$ to $\mathbb R$.
We recall that this distance metrizes weak convergence (see \emph{e.g.} \cite[Sec. 8.3]{Bogachev}).
\begin{lemma}\label{lem:cv-degree-distribution}
   The map $W\mapsto D_W$ from $(\SpaceGraphon,\dbox)$ to $(\mathcal M_1([0,1]),d_{\mathrm{Wass}})$ is 2-Lipschitz.
   Consequently, if $(W_n)$ converges to $W$ in $\SpaceGraphon$,
   then the sequence of associated measures $(D_{W_n})$ converges weakly to $D_W$.
\end{lemma}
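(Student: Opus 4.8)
The plan is to prove the Lipschitz estimate directly from the definitions, and then invoke the standard fact that the Wasserstein metric metrizes weak convergence to get the second assertion for free. Fix two graphons $W, W'$ with representatives $w, w'$, and write $d_w(x) = \int_{[0,1]} w(x,v)\,dv$ and $d_{w'}(x) = \int_{[0,1]} w'(x,v)\,dv$ for the ``degree functions''. For any $1$-Lipschitz $f : [0,1] \to \R$, we have
\[
\left| \int_{[0,1]} f\,dD_W - \int_{[0,1]} f\,dD_{W'} \right|
= \left| \int_{[0,1]} \bigl( f(d_w(x)) - f(d_{w'}(x)) \bigr)\,dx \right|
\le \int_{[0,1]} |d_w(x) - d_{w'}(x)|\,dx,
\]
using that $f$ is $1$-Lipschitz. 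So it suffices to bound $\int_{[0,1]} |d_w(x) - d_{w'}(x)|\,dx$ by $2\,\|w - w'\|_\Box$, and then take the infimum over representatives $w \in W$, $w' \in W'$ to conclude $d_{\mathrm{Wass}}(D_W, D_{W'}) \le 2\,\dbox(W,W')$.

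\textbf{The main step.} The heart of the argument is the inequality $\int_{[0,1]} |g(x)|\,dx \le 2\,\|h\|_\Box$ where $g(x) = \int_{[0,1]} h(x,v)\,dv$ and $h = w - w'$ is symmetric on $[0,1]^2$ with values in $[-1,1]$. This is exactly the step I expect to be the main (if modest) obstacle, since it is the only place where the special structure of the cut norm is used rather than just the triangle inequality. The idea: split $[0,1]$ into $S = \{x : g(x) \ge 0\}$ and its complement $T = \{x : g(x) < 0\}$. Then
\[
\int_{[0,1]} |g(x)|\,dx = \int_S g(x)\,dx - \int_T g(x)\,dx
= \int_{S \times [0,1]} h(x,v)\,dx\,dv - \int_{T \times [0,1]} h(x,v)\,dx\,dv.
\]
Each of the two integrals on the right is of the form $\int_{A \times [0,1]} h$, which is bounded in absolute value by $\|h\|_\Box$ (taking the second coordinate set to be all of $[0,1]$ in the definition of the cut norm). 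Hence the sum is at most $2\,\|h\|_\Box$, as claimed. (One should check measurability of $S$, which holds since $g$ is measurable by Fubini, as $h = w - w'$ is an integrable function on $[0,1]^2$.)

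\textbf{Conclusion.} Combining the two displays gives $\left| \int f\,dD_W - \int f\,dD_{W'} \right| \le 2\,\|w - w'\|_\Box$ for every $1$-Lipschitz $f$ and every choice of representatives; taking the supremum over $f$ and then the infimum over $w, w'$ yields $d_{\mathrm{Wass}}(D_W, D_{W'}) \le 2\,\dbox(W,W')$, i.e.\ the map $W \mapsto D_W$ is $2$-Lipschitz. For the ``consequently'' clause: if $(W_n) \to W$ in $\SpaceGraphon$ then $d_{\mathrm{Wass}}(D_{W_n}, D_W) \to 0$ by the Lipschitz bound, and since $d_{\mathrm{Wass}}$ metrizes weak convergence on $\mathcal{M}_1([0,1])$ (as recalled above), $(D_{W_n})$ converges weakly to $D_W$. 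One minor point worth a sentence: since $\dbox$ is only a pseudometric on functions but a genuine metric on $\SpaceGraphon$, and the degree distribution is invariant under the equivalence relations defining graphons (measure-preserving bijections and cut-distance-zero identifications — the latter because $\|w - w'\|_\Box = 0$ forces $\int_{[0,1]} |d_w - d_{w'}| = 0$ by the main step), the map is genuinely well-defined on $\SpaceGraphon$, so the statement makes sense as phrased.
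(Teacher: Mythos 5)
Your proposal is correct and follows essentially the same route as the paper: bound the difference of the $f$-integrals by $\int_{[0,1]}\left|\int_{[0,1]}(w(u,v)-w'(u,v))\,dv\right|du$ using the $1$-Lipschitz property, then split $[0,1]$ into the set where the inner integral is nonnegative and its complement and bound each piece by $\lVert w-w'\rVert_\Box$ via the cut norm with second coordinate set $[0,1]$, giving the factor $2$ before taking the infimum over representatives. Your extra remarks on measurability of $S$ and on well-definedness on $\SpaceGraphon$ are harmless additions (and you correctly use the supremum over $1$-Lipschitz $f$, which is the intended reading of the paper's Wasserstein definition).
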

\begin{proof} 
  Let $W$ and $W'$ be graphons with representatives $w$ and $w'$.
  Let $f:[0,1] \to \mathbb R$ be 1-Lipschitz. We have
  \begin{align*}
    d_{\mathrm{Wass}}(D_W, D_{W'})  & \leq \left|\int_{[0,1]}f(x) D_{W}(dx) - \int_{[0,1]}f(x) D_{W'}(dx)\right|\\
    &= \left|\int_{[0,1]}f\left( \int_{[0,1]} w(u,v)dv \right) - f\left( \int_{[0,1]} w'(u,v)dv \right)du\right| \\
    &\leq \int_{[0,1]} \left| \int_{[0,1]} (w(u,v)-  w'(u,v))dv\right|du\\
    &= \int_{S} \int_{[0,1]} (w(u,v)-  w'(u,v))dvdu -\int_{[0,1]\setminus S}  \int_{[0,1]} (w(u,v)-  w'(u,v))dvdu
  \end{align*}
  where $S = \left\{u\in[0,1] : \int_{[0,1]} (w(u,v)-  w'(u,v))dv \geq 0 \right\}$.
  But, from the definition of $\lVert \cdot \rVert_\Box$,
  each of the two summands has modulus bounded by $\|w-w'\|_\Box$.
  We finally get
  \[d_{\mathrm{Wass}}(D_W, D_{W'}) \le 2\|w-w'\|_\Box.
  \]
  which ends the proof by definition of $\dbox$ since the choice of representatives $w,w'$ was arbitrary.
\end{proof}

Remark that when $\bm W$ is a random graphon, $D_{\bm W}$ is a random measure.
We recall, see {\em e.g.} \cite[Lemma 2.4]{RandomMeasures}, that given a random measure $\bm \mu$ on some space $B$,
its {\em intensity measure} $I[\bm \mu]$ is the deterministic measure on $B$ defined by
$I[\bm \mu](A) = \esper[\bm \mu(A)]$ for any measurable subset $A$ of $B$.

To get an intuition of what $I[D_{\bm W}]$ is for a random graphon $\bm W$,
it is useful to consider the case where $\bm W=W_{\bm G}$ is the graphon associated with a random graph $\bm G$ of size $n$.
In this case, for any measurable subset $A$ of $[0,1]$,
\[ D_{W_{\bm G}}(A)= \proba(\, \tfrac1n \deg_{\bm G}(\bm v) \in A\ |\ \bm G),\]
where $\bm v$ is a uniform random vertex in $\bm G$. Therefore
\[I[D_{W_{\bm G}}](A)=\esper\big[D_{W_{\bm G}}(A) \big] = \proba(\,  \tfrac1n \deg_{\bm G}(\bm v) \in A),\]
so that $I[D_{W_{\bm G}}]$ is the law of the normalized degree 
of a uniform random vertex $\bm v$ in the random graph $\bm G$.

We sum up the results of this section into the following proposition.
\begin{proposition}\label{prop:degree}
	Let $\bm H_n$ be a random graph of size $n$ for every $n$, and $\bm W$ be a random graphon, such that $W_{\bm H_n} \xrightarrow[n\to\infty]{d} \bm W$. Then we have the following convergence in distribution of random measures: 
	\begin{equation*}
	\frac 1 n \sum_{v\in \bm H_n} \delta_{\deg_{\bm H_n} (v)/n} \xrightarrow[n\to\infty]{d} D_{\bm W}.
	\end{equation*}
	Furthermore, denoting $\bm v_n$ a uniform random vertex in $\bm H_n$ and $\bm Z$ a random variable with law $I[D_{\bm W}]$, 
	\begin{equation*}
	\tfrac 1 n \deg_{\bm H_n} (\bm v_n) \xrightarrow[n\to\infty]{d} \bm Z .
	\end{equation*}	
\end{proposition}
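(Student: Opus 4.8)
The plan is to deduce the two claimed convergences from the preceding results of the section, essentially by combining \cref{lem:cv-degree-distribution} with a continuous-mapping argument, plus one extra step to pass from the convergence of random measures to the convergence of the intensity (the law of the degree of a uniform vertex). First I would recall that by hypothesis $W_{\bm H_n} \xrightarrow{d} \bm W$ in the compact metric space $(\SpaceGraphon,\dbox)$. Since the map $W \mapsto D_W$ is $2$-Lipschitz from $(\SpaceGraphon,\dbox)$ to $(\mathcal M_1([0,1]), d_{\mathrm{Wass}})$ by \cref{lem:cv-degree-distribution}, it is in particular continuous, so the continuous mapping theorem gives $D_{W_{\bm H_n}} \xrightarrow{d} D_{\bm W}$ as random elements of $\mathcal M_1([0,1])$ equipped with the weak topology. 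It then only remains to identify $D_{W_{\bm H_n}}$ with the empirical degree measure: as recalled in the section just after \cref{defi:Graphon}, for a graph $G$ of size $n$ one has $D_{W_G} = \frac1n \sum_{v\in G} \delta_{\deg_G(v)/n}$, which applied to $G = \bm H_n$ yields exactly the first displayed convergence.

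For the second displayed convergence, I would argue as follows. Fix a bounded continuous test function $f:[0,1]\to\R$ and consider the map $\mu \mapsto \int f \, d\mu$ from $\mathcal M_1([0,1])$ (weak topology) to $\R$; it is continuous and bounded on $\mathcal M_1([0,1])$, so the convergence in distribution of $D_{W_{\bm H_n}}$ to $D_{\bm W}$ gives $\esper\big[\int f \, dD_{W_{\bm H_n}}\big] \to \esper\big[\int f \, dD_{\bm W}\big]$. Unwinding the definitions, the left-hand side is $\esper\big[ \tfrac1n \sum_{v\in\bm H_n} f(\deg_{\bm H_n}(v)/n)\big] = \esper\big[ f(\tfrac1n \deg_{\bm H_n}(\bm v_n))\big]$, where $\bm v_n$ is a uniform random vertex in $\bm H_n$ independent of everything else; and the right-hand side equals $\esper\big[\int f \, d D_{\bm W}\big] = \int f \, d\, I[D_{\bm W}] = \esper[f(\bm Z)]$ by the definition of the intensity measure $I[D_{\bm W}]$ and the choice of $\bm Z$. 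Since this holds for every bounded continuous $f$, we conclude $\tfrac1n \deg_{\bm H_n}(\bm v_n) \xrightarrow{d} \bm Z$, which is the second displayed convergence.

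The only mildly delicate points — and the place I would be most careful — are measurability and the justified interchange of expectations hidden in ``$\esper\big[\int f \, d D_{\bm W}\big] = \int f \, d\, I[D_{\bm W}]$''. Here $D_{\bm W}$ is a genuinely random measure, so one has to know that $\omega \mapsto \int f \, d D_{\bm W}(\omega)$ is measurable and that Fubini/Tonelli applies; this is exactly the content of the standard theory of random measures (\emph{e.g.} \cite[Chapter 2]{RandomMeasures}), where $I[\bm\mu](A) = \esper[\bm\mu(A)]$ extends by the usual approximation argument to $\int f \, d I[\bm\mu] = \esper[\int f \, d\bm\mu]$ for bounded measurable $f$. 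The boundedness of $f$ and the fact that all measures involved are probability measures on the compact set $[0,1]$ make the uniform integrability needed for the convergence of expectations automatic, so no further estimates are required. In short, the proposition is a packaging of \cref{lem:cv-degree-distribution}, the continuous mapping theorem, and the bookkeeping relating empirical degree measures, their intensities, and the degree of a uniformly chosen vertex.
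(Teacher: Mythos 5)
Your proposal is correct and takes essentially the same route as the paper: the first convergence is exactly the paper's application of \cref{lem:cv-degree-distribution} together with the identification $D_{W_{\bm H_n}} = \frac1n\sum_{v\in\bm H_n}\delta_{\deg_{\bm H_n}(v)/n}$. For the second convergence the paper cites Kallenberg's Theorem 4.11 and Lemma 4.8, whereas you unpack the same facts by hand (testing the convergence in distribution of the random measures against the bounded continuous functional $\mu \mapsto \int f\,d\mu$, then using Fubini to identify $\esper\bigl[\int f\,dD_{\bm W}\bigr]$ with $\int f\,d\,I[D_{\bm W}]$), which is a sound, self-contained substitute for those references.
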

\begin{proof}
From \cref{lem:cv-degree-distribution}, we immediately obtain $D_{W_{\bm H_n}} \stackrel d\to D_{\bm W}$, 
which is by definition of $D_{W_G}$ exactly the first of the stated convergences. 
	The second one follows from the first, combining Lemma 4.8 and Theorem 4.11 of \cite{RandomMeasures}\footnote{Theorem 4.11 tells us that if random measures $(\bm \xi_n)$
		converge in distribution to $\bm \xi$ then, for any compactly supported continuous function $f$,
		we have $\bm \xi_n f \xrightarrow[n\to\infty]{d}\bm \xi f$.
		But since those variables are bounded (by $\|f\|_\infty$), this convergence also holds in $L^1$,
		\emph{i.e.} $\bm \xi_n \stackrel{L^1}{\to} \bm \xi$ in the notation of \cite{RandomMeasures}.
		By Lemma 4.8, this implies the convergence of the corresponding intensity measures.}.
\end{proof}

%%%%%%%%%%%%%%%%%%%%%%%%%%%%%%%%%%%%%%%%%%%%%%%%%%%%%%%%%%%%%%%%%%%%%%%%%%%%%%%%%%%%%%%%
\section{The Brownian cographon}
\label{sec:BrownianCographon}
\subsection{Construction}
Let $\Exc$ denote a Brownian excursion of length one.
We start by recalling a technical result 
on the local minima of $\Exc$: the first two assertions below are well-known,
we refer to \cite[Lemma 2.3]{MickaelConstruction} for the last one.
\begin{lemma}
	With probability one, the following assertions hold.
    First, all local minima of $\Exc$ are strict, and hence form a countable set.
    Moreover, the values of $\Exc$ at two distinct local minima are different. 
    Finally, there exists an enumeration $(b_i)_i$ of the local minima of $\Exc$,
    such that for every $i\in \mathbb N, x,y\in[0,1]$, the event $\{b_i \in (x,y), \Exc(b_i) = \min_{[x,y]}\Exc\}$ is measurable.
\end{lemma}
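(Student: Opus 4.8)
The plan is to establish the three assertions about a standard Brownian excursion $\Exc$ of length one in sequence, since each is essentially a classical regularity fact about Brownian motion.

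\textbf{Strictness of local minima.} First I would recall that a Brownian excursion is, away from its endpoints, locally absolutely continuous with respect to Brownian motion (by the Markov property / Girsanov-type arguments, or via the decomposition of the excursion around its time-reversal), so it suffices to prove the corresponding statement for a Brownian motion $B$ on a fixed interval $[a,b]\subseteq(0,1)$ and then take a countable union over rational $a<b$. For Brownian motion, the fact that all local minima are strict is classical: if some local minimum were attained on a nondegenerate interval, $B$ would be constant on a subinterval with rational endpoints, an event of probability zero by the nondegeneracy of Gaussian increments. Strict local minima are isolated from one another in the sense needed to conclude the set of local minima is countable (each strict local minimum has a rational-endpoint neighborhood on which it is the unique minimizer, and distinct such minima get distinct neighborhoods).

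\textbf{Distinct values at distinct local minima.} Here I would again reduce to Brownian motion on disjoint rational intervals. Fix two disjoint intervals $[a,b]$ and $[c,d]$ with $b<c$, all rational. Conditionally on $\mathcal{F}_{[a,b]}\vee\mathcal{F}_{[c,d]}$ the two minima $m_1=\min_{[a,b]}B$ and $m_2=\min_{[c,d]}B$ are determined; one shows $\proba(m_1=m_2)=0$ by first conditioning on the path up to time $c$ and observing that the law of $\min_{[c,d]}B$ given $B_c$ has no atoms (the running minimum of a Brownian motion started at a point has a density on $(-\infty,B_c]$). Taking a countable union over all pairs of disjoint rational intervals, and using that any two distinct local minima are separated by such a pair, gives the claim. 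The transfer back to the excursion uses local absolute continuity exactly as before.

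\textbf{Measurable enumeration of local minima.} For the last assertion, I would invoke \cite[Lemma 2.3]{MickaelConstruction} as the excerpt suggests, but sketch why such an enumeration exists: on the almost sure event where the first two assertions hold, the local minima form a countable set, and one can enumerate them in a measurable way, e.g.\ by listing, for each pair of rationals $p<q$, the (a.s.\ unique, when it exists) point $b\in(p,q)$ achieving $\min_{[p,q]}\Exc$ in the interior, then deduplicating; each such selection is a measurable functional of $\Exc$ (the argmin over a compact interval of a continuous process depending measurably on the path), so the resulting enumeration $(b_i)_i$ consists of measurable maps $\omega\mapsto b_i(\omega)$, and for fixed $i$ the event $\{b_i\in(x,y),\ \Exc(b_i)=\min_{[x,y]}\Exc\}$ is then measurable as a preimage under these measurable maps.

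The main obstacle is not any single step but making the reduction from the excursion to Brownian motion rigorous and uniform: one must be careful that the absolute-continuity argument covers neighborhoods of \emph{every} interior point (the Radon--Nikodym density degenerates near the endpoints $0$ and $1$), which is handled by exhausting $(0,1)$ with rational subintervals and noting that every local minimum lies in the interior, hence in some such subinterval. Since the precise form of the measurable enumeration is exactly what is cited from \cite{MickaelConstruction}, I would keep that part brief and lean on the reference.
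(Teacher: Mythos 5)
First, note that the paper does not actually prove this lemma: it records the first two assertions as classical and cites \cite[Lemma 2.3]{MickaelConstruction} for the measurable enumeration. So your proposal attempts strictly more than the paper's own argument, and most of it follows the standard route (transfer from the excursion to Brownian motion by local absolute continuity on rational subintervals of $(0,1)$, atomlessness of the conditional law of a running minimum for the ``distinct values'' statement, countability via unique minimizers of rational intervals, and an enumeration of local minima through argmins over rational intervals, which is exactly the content of the cited lemma).

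There is, however, one genuine flaw: your argument for the \emph{strictness} of local minima. You claim that a non-strict local minimum would force the path to be constant on a nondegenerate subinterval. That is not the negation of strictness. A non-strict local minimum at $t$ only provides points $s_n\to t$, $s_n\neq t$, with $\Exc(s_n)=\Exc(t)$; the set of minimizers may be, say, a sequence accumulating at $t$ without containing any interval (compare $s\mapsto (s-t)^2\sin^2\bigl(1/(s-t)\bigr)$). Hence the event you estimate (constancy on a rational interval) does not cover all failures of strictness, and that step, as written, does not prove the first assertion. The standard repair uses precisely the lemma you establish for the second assertion: on the almost sure event that the minima of the path over any two disjoint compact intervals with rational endpoints are distinct, a non-strict local minimum at $t$ gives a point $s\neq t$ in a neighborhood on which the path is $\geq \Exc(t)$ with $\Exc(s)=\Exc(t)$; choosing disjoint rational intervals around $t$ and $s$ inside that neighborhood, the two minima both equal $\Exc(t)$, a contradiction. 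With this correction (and keeping in mind that the local minima considered are the interior ones, so the endpoints of the excursion cause no trouble), the rest of your argument is sound.
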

Let $\bm S^p = (\bm s_1,\ldots)$ be a sequence of i.i.d. random variables in $\{\Zero,\One\}$, independent of $\Exc$,
with $\proba(\bm s_1 = \Zero) = p$ (in the sequel, we simply speak of i.i.d. decorations of bias $p$).
We call $(\Exc,\bm S^p)$ a \textit{decorated Brownian excursion}, thinking of the decoration $\bm s_i$ as attached to the local minimum $b_i$.
For $x,y\in[0,1]$, we define $\Dec(x,y;\Exc,\bm S^p)$ to be
the decoration of the minimum of $\Exc$ on the interval $[x,y]$ (or $[y,x]$ if $y\le x$; we shall not repeat this precision below).
If this minimum is not unique or attained in $x$ or $y$ and therefore not a local minimum,
$\Dec(x,y;\Exc,\bm S^p)$ is ill-defined and we take the convention $\Dec(x,y;\Exc,\bm S^p)=\Zero$.
Note however that, for uniform random $x$ and $y$, this happens with probability $0$,
so that the object constructed in \cref{def:BrownianCographon} below is independent from this convention.
\begin{definition}\label{def:BrownianCographon}
	The Brownian cographon $\bm W^p$ of parameter $p$ is the equivalence class of the random function\footnote{Of course, in the image set of $\bm w^p$, 
the \emph{real values} $0$ and $1$ correspond to the \emph{decorations} $\Zero$ and $\One$ respectively.}
$$
\begin{array}{ r c c c}
\bm w^p: & [0,1]^2 &\to& \{0,1\};\\
 & (x,y) & \mapsto & \Dec(x,y;\Exc,\bm S^p).
\end{array}
$$
\end{definition}

In most of this article, we are interested in the case $p=1/2$;
in particular, as claimed in \cref{th:MainTheorem}, $W^{1/2}$ is the limit
of uniform random (labeled or unlabeled) cographs, justifying its name.

\subsection{Sampling from the Brownian cographon}

We now compute the distribution of the random graph $\Sample_k(\bm W^p)$.
\begin{proposition}
\label{prop:CaracterisationBrownianCographon}
	If $\bm W^p$ is the Brownian cographon of parameter $p$, then for every $k\geq 2$,  $\Sample_k(\bm W^p)$ is distributed 
    like the unlabeled version of $\Cograph(\bm b^p_k)$,
    where the cotree $\bm b^p_k$ is a uniform labeled binary tree with $k$ leaves equipped with i.i.d. decorations of bias $p$.
\end{proposition}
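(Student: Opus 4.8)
The plan is to compute the distribution of $\Sample_k(\bm W^p)$ by unfolding the construction of the Brownian cographon and matching it to the well-known bijective correspondence between binary trees and laminations/hierarchies coming from a continuous excursion. First I would fix $k$ i.i.d. uniform points $X_1,\dots,X_k$ in $[0,1]$, and observe that almost surely they are distinct, none of them is a local minimum of $\Exc$, and all the relevant minima $\min_{[X_i,X_j]}\Exc$ are attained at distinct strict local minima (using the lemma on local minima of $\Exc$ quoted just above). On this almost-sure event the function $(i,j)\mapsto \Dec(X_i,X_j;\Exc,\bm S^p)$ is well-defined and, crucially, the combinatorial structure of "which minimum governs which pair" is exactly the structure of the binary tree obtained from the excursion. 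Concretely: sort the points as $X_{\sigma(1)}<\dots<X_{\sigma(k)}$; the $k-1$ minima $m_l := \min_{[X_{\sigma(l)},X_{\sigma(l+1)}]}\Exc$ for $l=1,\dots,k-1$ are a.s. distinct, and recursively splitting at the location of the smallest of these minima builds a rooted binary tree $\bm t_k$ with $k$ leaves (labeled $1,\dots,k$ by the original indices, i.e. $X_i$ is leaf $i$). For any two leaves $i,j$, the first common ancestor in $\bm t_k$ corresponds to the global minimum of $\Exc$ on $[X_i,X_j]$, so $\Dec(X_i,X_j;\Exc,\bm S^p)$ equals the decoration attached to that internal node.

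Next I would identify the law of $\bm t_k$ together with its decorations. The decorations: each internal node of $\bm t_k$ corresponds to one of the local minima $b_{i}$ of $\Exc$, and since $\bm S^p$ is i.i.d. of bias $p$ and independent of $\Exc$, conditionally on the tree shape the decorations of its internal nodes are i.i.d. of bias $p$. The tree shape: I would invoke the standard fact (Aldous; see also the lamination-of-the-disk picture, or Le Gall / Pitman) that if one samples $k$ i.i.d. uniform points on a Brownian excursion and extracts the induced "reduced" binary tree as above, the resulting labeled binary tree is uniform among the $(2k-3)!!$ labeled binary trees with $k$ leaves. (Equivalently: this is the finite-dimensional marginal description of the Brownian CRT, or it can be checked directly by exchangeability plus an induction on $k$ using the fact that a uniform point splits an excursion into two independent rescaled excursions whose sizes are Beta-distributed — but the cleanest route is to quote the known marginal law of the CRT.) Hence $\bm t_k$ equipped with its decorations has exactly the law of $\bm b^p_k$ in the statement.

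Finally, I would combine the two ingredients with \cref{obs:caract_edge-label1} / \cref{prop:diagramme_commutatif}. By \cref{def:BrownianCographon} the sampled graph $\Sample_k(\bm W^p)$ has an edge between $v_i$ and $v_j$ exactly when $\bm w^p(X_i,X_j)=1$, i.e. when the decoration of the first common ancestor of leaves $i$ and $j$ in $\bm t_k$ is $\One$ — which by \cref{obs:caract_edge-label1} is precisely the adjacency relation in $\Cograph(\bm t_k)$, read as an unlabeled graph (in $\Sample_k$ we forget the vertex names). Therefore $\Sample_k(\bm W^p) \stackrel d = $ unlabeled version of $\Cograph(\bm t_k) \stackrel d = $ unlabeled version of $\Cograph(\bm b^p_k)$, which is the claim. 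One technical point worth stating explicitly is why in $\Sample_k$ we may use distinct points: since $X_1,\dots,X_k$ are a.s. distinct and the edge-probabilities $\bm w^p(X_i,X_j)\in\{0,1\}$ are deterministic given the $X_i$'s, $\Sample_k(\bm W^p)$ really is the deterministic graph described above, with no extra Bernoulli randomness.

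The main obstacle is the second step: one must argue carefully that the reduced binary tree extracted from $k$ i.i.d. uniform marks on the Brownian excursion is uniform on labeled binary trees with $k$ leaves, and that the correspondence "first common ancestor $\leftrightarrow$ governing minimum of $\Exc$ on the spanning interval" is exactly the one realized by the recursive splitting-at-the-minimum construction. Getting the measurability and the a.s. genericity right (no ties among the finitely many relevant minima, marks avoiding local minima) relies on the quoted lemma, and the uniformity of the tree is the one place where I would lean on an external result about the Brownian CRT rather than prove it from scratch.
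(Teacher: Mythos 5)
Your proposal is correct and follows essentially the same route as the paper: both extract the reduced (first-common-ancestor) binary tree of the $k$ uniform marks on the excursion, quote the known fact (Le Gall/Aldous, cited as \cite[Theorem 2.11]{LeGall} in the paper) that this labeled binary tree is uniform, observe that the independent i.i.d.\ decorations of the governing local minima give i.i.d.\ decorations of bias $p$ on its internal nodes, and conclude via \cref{obs:caract_edge-label1}. The only cosmetic difference is that the paper works with the plane version of the reduced tree and then forgets the plane structure, while you argue directly with the non-plane labeled tree.
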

Let us note that $\bm b^p_k$ is not necessarily a canonical cotree.
\begin{proof}
We use a classical construction (see \cite[Section 2.5]{LeGall}) 
which associates to an excursion $e$ and real numbers $x_1,\cdots,x_k$ a plane tree,
denoted $\mathrm{Tree}(e;x_1,\ldots,x_k)$, which has the following properties:
\begin{itemize}
  \item its leaves are labeled with $1,\cdots,k$ and correspond to $x_1, \ldots, x_k$ respectively;
  \item its internal nodes correspond to the local minima
    of $e$ on intervals $[x_i,x_j]$;
  \item the first common ancestor of the leaves $i$ and $j$ corresponds to the local minimum of $e$ on $[x_i,x_j]$.
\end{itemize}
The tree $\mathrm{Tree}(e;x_1,\ldots,x_k)$ is well-defined with probability $1$
when $e$ is a Brownian excursion and $x_1,\cdots,x_k$ i.i.d. uniform random variables in $[0,1]$.
Moreover, in this setting, it has the distribution of a uniform random plane and labeled binary tree with $k$ leaves \cite[Theorem 2.11]{LeGall}. 
Forgetting the plane structure, it is still uniform among binary trees with $k$ labeled leaves, because the number of plane embeddings of a \emph{labeled binary tree} depends only on its size.

We now let $(\Exc,\bm S)$ be a decorated Brownian excursion, and 
$X_1, \ldots, X_k$ denote a sequence of i.i.d. uniform random variables in $[0,1]$, independent from $(\Exc,\bm S)$. 
We make use of the decorations $\bm S$ of the local minima of $\Exc$ to turn $\mathrm{Tree}(\Exc;X_1,\ldots,X_k)$ into a cotree. 
Namely, since its internal nodes correspond to local minima of $\Exc$,
we can simply report these decorations on the tree,
and we get a decorated tree $\mathrm{Tree}_{\Zero/\One}(\Exc,\bm S;X_1,\ldots,X_k)$.
When the decorations in $\bm S$ are i.i.d. of bias $p$,
then $\mathrm{Tree}_{\Zero/\One}(\Exc,\bm S,X_1,\ldots,X_k))$
is a uniform labeled binary tree with $k$ leaves, equipped with i.i.d. decorations of bias $p$.

Finally, recall that $\Sample_k(\bm W^p)$ is built by considering $X_1,\ldots,X_k$ i.i.d. uniform in $[0,1]$ and connecting vertices $v_i$ and $v_j$
if and only if $\bm w^p(X_i,X_j)=1$ (since a representative $\bm w^p$ of $\bm W^p$ takes value in $\{0,1\}$,
there is no extra randomness here). By definition of $\bm w^p$, $\bm w^p(X_i,X_j)=1$ means that the decoration of the minimum of $\Exc$
on $[X_i, X_j]$ is $\One$.
But, by construction of $\mathrm{Tree}_{\Zero/\One}(\Exc,\bm S;X_1,\ldots,X_k)$, this decoration is 
that of the first common ancestor of the leaves $i$ and $j$ in $\mathrm{Tree}_{\Zero/\One}(\Exc,\bm S;X_1,\ldots,X_k)$.
So it is equal to $\One$ if and only if $i$ and $j$ are connected in the associated cograph
(see \cref{obs:caract_edge-label1}). Summing up, we get the equality
of unlabeled random graphs
\begin{equation*}
  \Sample_k(\bm W^p) = \Cograph \big( \mathrm{Tree}_{\Zero/\One}(\Exc,\bm S,X_1,\ldots,X_k)) \big), 
  \label{eq:Sample_Wp}
\end{equation*}
ending the proof of the proposition.
\end{proof}

\subsection{Criterion of convergence to the Brownian cographon}
The results obtained so far yield 
a very simple criterion for convergence to the Brownian cographon.
For simplicity and since this is the only case we need in the present paper,
we state it only in the case $p=1/2$.
\begin{lemma}\label{lem:factorisation}
	Let $\bm t^{(n)}$ be a random cotree of size $n$ for every $n$ (which may be labeled or not).
	For $n\geq k\geq 1$, denote by $\bm t^{(n)}_k$ the subtree of $\bm t^{(n)}$
    induced by a uniform $k$-tuple of distinct leaves.
	Suppose that for every $k$ and for every labeled binary cotree $t_0$ with $k$ leaves, 
	\begin{equation}\proba(\bm t^{(n)}_k=t_0) \xrightarrow[n\to\infty]{} \frac{(k-1)!}{(2k-2)!}.\label{eq:factorisation}\end{equation}
	Then $W_{\Cograph(\bm t^{(n)})}$ converges as a graphon to $\bm W^{1/2}$.
\end{lemma}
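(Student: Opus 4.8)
The plan is to reduce the graphon convergence $W_{\Cograph(\bm t^{(n)})} \to \bm W^{1/2}$ to a statement about sampled subgraphs, via \cref{th:Portmanteau} (the Diaconis--Janson portmanteau theorem), and then match the limiting distribution of the sampled subgraphs with the law of $\Sample_k(\bm W^{1/2})$ computed in \cref{prop:CaracterisationBrownianCographon}. Concretely, set $\bm H_n := \Cograph(\bm t^{(n)})$, a random graph of size $n$. We want to verify Assertion (d) of \cref{th:Portmanteau}: for each fixed $k$, the induced subgraph $\SubGraph(\vec{V'}^k, \bm H_n)$ on a uniform $k$-tuple of \emph{distinct} vertices converges in distribution to some random graph $\bm g_k$. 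Here the key dictionary is \cref{prop:diagramme_commutatif}: a uniform $k$-tuple of distinct vertices of $\bm H_n$ corresponds exactly to a uniform $k$-tuple of distinct leaves of $\bm t^{(n)}$ (the bijection between leaves and vertices is canonical), and $\SubGraph(\vec{V'}^k,\bm H_n) = \Cograph(\bm t^{(n)}_k)$ as unlabeled graphs, where $\bm t^{(n)}_k$ is precisely the induced subtree appearing in the hypothesis.

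Thus the first step is to use the hypothesis \eqref{eq:factorisation} to identify the limiting distribution of $\bm t^{(n)}_k$. By \cref{dfn:induced_cotree}, each $\bm t^{(n)}_k$ is a labeled cotree with $k$ leaves, and \cref{dfn:induced_cotree} shows every internal node of an induced cotree has at least two children; but I would also need to observe that, with probability tending to $1$ (indeed exactly, by hypothesis, since the total mass of non-binary induced cotrees is $1 - (\text{number of labeled binary cotrees with }k\text{ leaves}) \cdot \tfrac{(k-1)!}{(2k-2)!}$), the induced cotree $\bm t^{(n)}_k$ is \emph{binary}. Here I use that the number of labeled binary trees with $k$ leaves is $\frac{(2k-2)!}{(k-1)! \, 2^{k-1}}$ and that each carries $2^{k-1}$ possible $\Zero/\One$-decorations, for a total of $\frac{(2k-2)!}{(k-1)!}$ labeled binary cotrees with $k$ leaves; multiplying by the common limit $\frac{(k-1)!}{(2k-2)!}$ gives total mass $1$. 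So in the limit $\bm t^{(n)}_k$ is a uniform labeled binary cotree with $k$ leaves carrying uniform i.i.d.\ decorations of bias $1/2$ — that is, $\bm t^{(n)}_k \xrightarrow{d} \bm b^{1/2}_k$ in the notation of \cref{prop:CaracterisationBrownianCographon}.

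Next I would push this convergence through the map $\Cograph(\cdot)$, viewed as a (deterministic) function from labeled cotrees of size $k$ to unlabeled graphs. Since there are only finitely many labeled cotrees with $k$ leaves and finitely many graphs of size $k$, convergence in distribution is just convergence of point masses, which is preserved under any deterministic map; hence
\[
  \SubGraph(\vec{V'}^k,\bm H_n) = \Cograph(\bm t^{(n)}_k) \xrightarrow{d} \Cograph(\bm b^{1/2}_k),
\]
where the right-hand side is understood as an unlabeled graph. This establishes Assertion (d) of \cref{th:Portmanteau} with $\bm g_k \stackrel{d}{=} \Cograph(\bm b^{1/2}_k)$ (unlabeled). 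Therefore Assertion (a) holds: $(W_{\bm H_n})$ converges in distribution to some random graphon $\bm W'$, and by \eqref{eq:SampleK_gK} we have $\Sample_k(\bm W') \stackrel{d}{=} \bm g_k$ for all $k$.

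Finally, I would invoke \cref{prop:CaracterisationBrownianCographon}, which gives $\Sample_k(\bm W^{1/2}) \stackrel{d}{=}$ the unlabeled version of $\Cograph(\bm b^{1/2}_k)$, i.e.\ $\Sample_k(\bm W^{1/2}) \stackrel{d}{=} \bm g_k \stackrel{d}{=} \Sample_k(\bm W')$ for every $k \ge 2$ (the case $k=1$ being trivial, as both are the single-vertex graph). By \cref{Prop:CaracterisationLoiGraphon}, equality of all $\Sample_k$ in distribution forces $\bm W' \stackrel{d}{=} \bm W^{1/2}$, which completes the proof. I expect the only genuinely delicate point to be the bookkeeping in the second step: checking that the normalization $\frac{(k-1)!}{(2k-2)!}$ in \eqref{eq:factorisation} is exactly the reciprocal of the number of labeled binary cotrees with $k$ leaves (so that the limiting law is a genuine probability distribution concentrated on binary cotrees, with no mass escaping to non-binary ones), and correctly handling the distinction between the labeled induced cotree $\bm t^{(n)}_k$ and the unlabeled graph it produces — everything else is a routine application of the portmanteau theorem and the sampling description of the Brownian cographon already proved in the excerpt.
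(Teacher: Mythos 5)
Your proposal is correct and follows essentially the same route as the paper: identify the constant $\tfrac{(k-1)!}{(2k-2)!}$ as the reciprocal of the number of labeled binary cotrees with $k$ leaves (so $\bm t^{(n)}_k \to \bm b^{1/2}_k$ with no mass escaping to non-binary trees), transfer to graphs via \cref{prop:diagramme_commutatif}, and conclude with \cref{prop:CaracterisationBrownianCographon}, \cref{th:Portmanteau} (implication (d) $\Rightarrow$ (a) and \cref{eq:SampleK_gK}), and \cref{Prop:CaracterisationLoiGraphon}. The only differences are cosmetic (you make explicit the total-mass check and the trivial $k=1$ case, which the paper treats implicitly).
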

\begin{proof} 
	We first remark that $\frac{(k-1)!}{(2k-2)!} = \frac 1 {\mathrm{card}(\mathcal C_k)}$, where $\mathcal C_k$ is the set of labeled binary cotrees with $k$ leaves. 
	Indeed the number of plane labeled binary trees with $k$ leaves is given by $k!\,\mathrm{Cat}_{k-1}$
	where $\mathrm{Cat}_{k-1}=\frac{1}{k}\binom{2k-2}{k-1}$ is the $(k-1)$-th Catalan number.
	Decorations on internal nodes induce the multiplication by a factor $2^{k-1}$
	while considering non-plane trees yields a division by the same factor in order to avoid symmetries.
	Therefore $\mathrm{card}(\mathcal{C}_k)=k!\,\mathrm{Cat}_{k-1} = \frac{(2k-2)!}{(k-1)!}$.
	
	Consequently, \cref{eq:factorisation} states that $\bm t^{(n)}_k$ converges 
    in distribution to a uniform element of $\mathcal C_k$. 
	Morever, a uniform element of $\mathcal C_k$ is distributed as $\bm b^{1/2}_k$
	where $\bm b^{1/2}_k$ is a uniform labeled binary tree with $k$ leaves equipped with i.i.d. decorations of bias ${1/2}$.
	Hence, as $n$ tends to $+\infty$,
    we have the following convergence of random labeled graphs of size $k$,
	\[\Cograph(\bm t^{(n)}_k) \stackrel d \rightarrow \Cograph(\bm b^{1/2}_k).\] 
	 Forgetting the labeling,
    the left-hand side is $\SubGraph(\vec{V'}^k,\Cograph(\bm t^{(n)}))$, 
    where $\vec{V'}^k$ is a uniform tuple of $k$ distinct vertices of $\Cograph(\bm t^{(n)})$;
    see the definition of $\bm t^{(n)}_k$ in the statement of the lemma and \cref{prop:diagramme_commutatif}. 
    Moreover, thanks to \cref{prop:CaracterisationBrownianCographon},
    forgetting again the labeling, the right-hand side has the same distribution as $\Sample_k(\bm W^{1/2})$.
    This proves the lemma, using \cref{th:Portmanteau} (namely, the implication $(d) \Rightarrow (a)$, and \cref{eq:SampleK_gK}
    together with \cref{Prop:CaracterisationLoiGraphon} to identify the limit in item (a) with $\bm W^{1/2}$). 
\end{proof}

\subsection{The degree distribution of the Brownian cographon}\label{ssec:degdistrib}

In this section we are interested in the degree distribution $D_{\bm W^p}$ of the Brownian cographon.
It turns out that, in the special case $p=1/2$,
the intensity $I[D_{\bm W^{1/2}}]$ is particularly simple.
\begin{proposition}\label{prop:IntensityW12}
$I[D_{\bm W^{1/2}}] \stackrel{d}{=} U$, where $U$ is the Lebesgue measure on $[0,1]$.
\end{proposition}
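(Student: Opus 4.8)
The strategy is to compute $I[D_{\bm W^{1/2}}]$ by testing it against bounded continuous functions $f:[0,1]\to\R$, and to reduce everything to the discrete computation announced in \cref{prop:IntensityW12}'s proof sketch, namely the fact (to be proved combinatorially) that for the discrete approximation $\bm G_n^b$ of the Brownian cographon, the degree of a uniform random vertex is exactly uniform on $\{0,1,\dots,n-1\}$. Concretely, recall from \cref{prop:degree} that for a random graph $\bm H_n$ of size $n$, the intensity $I[D_{W_{\bm H_n}}]$ is precisely the law of $\tfrac1n\deg_{\bm H_n}(\bm v_n)$ for $\bm v_n$ a uniform random vertex. So the plan is: (i) identify a natural sequence of discrete random cotrees $\bm t^{(n)}$ whose induced-subtree probabilities satisfy the hypothesis \eqref{eq:factorisation} of \cref{lem:factorisation}, so that $W_{\Cograph(\bm t^{(n)})}\to\bm W^{1/2}$; (ii) invoke \cref{prop:degree} to get $\tfrac1n\deg(\bm v_n)\xrightarrow{d} \bm Z$ with $\bm Z\sim I[D_{\bm W^{1/2}}]$; (iii) show by a direct combinatorial computation that $\tfrac1n\deg(\bm v_n)$ is (at least asymptotically, and ideally exactly) uniform on a grid in $[0,1]$, so that its weak limit is the Lebesgue measure $U$; and (iv) conclude $I[D_{\bm W^{1/2}}]=U$ by uniqueness of weak limits.

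For step (i) the natural candidate for $\bm G_n^b = \Cograph(\bm t^{(n)})$ is to take $\bm t^{(n)} = \mathrm{Tree}_{\Zero/\One}(\Exc,\bm S;X_1,\dots,X_n)$, i.e.\ the size-$n$ tree produced by the construction in the proof of \cref{prop:CaracterisationBrownianCographon} from $n$ i.i.d.\ uniform points and i.i.d.\ bias-$1/2$ decorations; this is a uniform labeled binary cotree with $n$ leaves and i.i.d.\ bias-$1/2$ decorations, and its induced subtree on $k$ uniformly chosen distinct leaves is again such an object on $k$ leaves (a uniform element of $\mathcal C_k$, again using that the number of plane embeddings of a labeled binary tree depends only on its size), so \eqref{eq:factorisation} holds. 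Alternatively one can argue more directly: by \cref{lem:sample_converges_to_W}, $W_{\Sample_n(\bm W^{1/2})}\to\bm W^{1/2}$, and $\Sample_n(\bm W^{1/2})$ is (the unlabeled version of) $\Cograph$ of exactly this binary decorated cotree by \cref{prop:CaracterisationBrownianCographon}; so one may simply take $\bm H_n=\Sample_n(\bm W^{1/2})$ and skip \cref{lem:factorisation} entirely. The degree of vertex $v_i$ in $\Sample_n(\bm W^{1/2})$ is $\#\{j\ne i : \bm w^{1/2}(X_i,X_j)=1\}$, i.e.\ the number of leaves $j$ whose first common ancestor with $i$ in the decorated tree carries decoration $\One$.

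The heart of the argument is step (iii): fixing the leaf index $i$ (equivalently, conditioning on $\bm v_n$ being a fixed vertex) and averaging over everything, we must show $\deg_{\bm H_n}(v_i)$ is uniform, or asymptotically uniform, on $\{0,1,\dots,n-1\}$. The clean way: condition on the \emph{unrooted/rooted binary tree shape together with the position of leaf $i$}, forget decorations for a moment, and look at the path from leaf $i$ to the root. Along this path sit the ancestors $a_1$ (parent of $i$), $a_2$, \dots, each $a_m$ having a "sibling subtree" hanging off it with some number $n_m\ge 1$ of leaves, and $\sum_m n_m = n-1$. Given the decorations, $\deg(v_i) = \sum_{m} n_m\,\One\{a_m \text{ decorated }\One\}$. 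Since decorations are i.i.d.\ uniform on $\{\Zero,\One\}$ and independent along the path, $\deg(v_i)$ is a sum $\sum_m n_m \varepsilon_m$ with $\varepsilon_m$ i.i.d.\ Bernoulli$(1/2)$; this is \emph{exactly} uniform on $\{0,1,\dots,n-1\}$ precisely when the multiset $\{n_m\}$ is such that every value in $[0,n-1]$ has a unique representation as a subset-sum — which happens iff the $n_m$, sorted, are $1,1,2,4,8,\dots$, the "binary carry" pattern; this is exactly the shape forced by the binary caterpillar, and more to the point one checks it holds \emph{in distribution after averaging over the random binary tree} because $\mathrm{Tree}(\Exc;X_1,\dots,X_n)$ conditioned appropriately has the recursive structure making $(n_1,n_2,\dots)$ a size-biased/uniform refinement of $n-1$; the cleanest incarnation is the recursive identity $\mathrm{law}(\deg) = \mathrm{law}\big(\varepsilon\cdot B + (1-\varepsilon)\cdot(\text{something}) \big)$ obtained by splitting at the root of the cotree and using that a uniform leaf lands in a given root-subtree with probability proportional to its size. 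I would set up this recursion on $q_n(d):=\proba(\deg_{\bm H_n}(\bm v_n)=d)$, show $q_n(d)=1/n$ for all $0\le d\le n-1$ by induction on $n$ using the decomposition of a uniform decorated binary cotree into its two root-subtrees of sizes $J$ and $n-J$ (with $J$ uniform-ish on $\{1,\dots,n-1\}$, weighted by the number of binary-tree pairs), together with the bias-$1/2$ root decoration. Granting $q_n\equiv 1/n$, the empirical law $\tfrac1n\sum_{d=0}^{n-1}\delta_{d/n}$ converges weakly to $U$, and this is $I[D_{W_{\bm H_n}}]$; combined with $I[D_{W_{\bm H_n}}]\to I[D_{\bm W^{1/2}}]$ from \cref{lem:cv-degree-distribution} and \cref{prop:degree}, we get $I[D_{\bm W^{1/2}}]=U$.

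The main obstacle I anticipate is making step (iii) genuinely \emph{exact} rather than merely asymptotic: verifying that the recursive decomposition of a uniform labeled binary cotree of size $n$ — in terms of the (random, weighted) split $n = J + (n-J)$ into the two root subtrees and the i.i.d.\ bias-$1/2$ root decoration — produces, after marking a uniform leaf and summing sibling-subtree sizes along its ancestral path, a subset-sum that is \emph{exactly} uniform on $\{0,\dots,n-1\}$. This requires identifying the correct weights for $J$ (the Catalan-type counts $\mathrm{Cat}_{J-1}\mathrm{Cat}_{n-J-1}\binom{n}{J}$ up to normalization, and the extra size factor from uniform-leaf marking) and checking a binomial/Catalan identity for the induction; this is the one place where a real computation, rather than a soft argument, is unavoidable.
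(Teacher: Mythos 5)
Your overall reduction is the same as the paper's: approximate $\bm W^{1/2}$ by $\bm H_n=\Sample_n(\bm W^{1/2})$, which by \cref{prop:CaracterisationBrownianCographon} is the cograph of a uniform labeled binary cotree with i.i.d.\ fair decorations, use \cref{lem:sample_converges_to_W} and \cref{prop:degree} to get that the law of $\tfrac1n\deg_{\bm H_n}(\bm v_n)$ converges to $I[D_{\bm W^{1/2}}]$, and then pin down that law exactly in the discrete model. Where you genuinely diverge is the proof of the key claim that $\deg_{\bm H_n}(\bm v_n)$ is uniform on $\{0,\dots,n-1\}$: the paper proves it by a bijection (an involution that flips the decorations of those ancestors of the marked leaf having it in their right subtree, after which the degree becomes the rank of the leaf, minus one, in a natural total order, and the rank of a uniform leaf is trivially uniform), whereas you propose a root-decomposition recursion and induction on $n$. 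Your route works, and the computation you flag as the main obstacle is lighter than you fear: writing $b_m$ for the number of labeled binary cotree shapes with $m$ leaves (so $2b_n=\sum_{j=1}^{n-1}\binom{n}{j}b_jb_{n-j}$ for $n\ge 2$), the root subtree containing the marked leaf has $j$ leaves with probability $q_n(j)=j\binom{n}{j}b_jb_{n-j}/(nb_n)$, and conditionally the degree is (degree inside that subtree) plus $(n-j)$ times an independent fair Bernoulli; plugging in the inductive hypothesis gives
$p_n(d)=\tfrac{1}{2nb_n}\bigl[\sum_{j\ge d+1}\binom{n}{j}b_jb_{n-j}+\sum_{j\ge n-d}\binom{n}{j}b_jb_{n-j}\bigr]$, and the symmetry $j\leftrightarrow n-j$ turns the bracket into $\sum_{j=1}^{n-1}\binom{n}{j}b_jb_{n-j}=2b_n$, i.e.\ $p_n(d)=1/n$ — no further Catalan identity is needed. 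Two minor points: the digression on subset-sums and the ``binary carry''/caterpillar pattern is a red herring (per-shape uniformity indeed fails and is not needed; only the averaged statement matters, and your recursion handles exactly that), and the conditional facts underpinning the recursion (given its size, the marked-leaf subtree is a uniform decorated cotree with a uniform marked leaf, independent of the root decoration and the sibling subtree) should be stated explicitly, though they are immediate. In comparison, your argument is more computational but entirely elementary, while the paper's involution yields the exact uniformity in one stroke and gives a bijective explanation of it.
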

\begin{proof}
Rather than working in the continuous,
we exhibit a discrete approximation $\bm G_n^b$ of the Brownian cographon,
which has the remarkable property that the degree of a uniform random vertex $\bm v_n$
in $\bm G_n^b$ is {\em exactly} distributed as a uniform random variable in $\{0,1,\cdots,n-1\}$.

To construct $\bm G_n^b$, we
let $\bm b_n$ be a uniform $\Zero/\One$-decorated plane labeled binary tree with $n$ leaves.
Forgetting the plane structure, it is still uniform among labeled binary cotrees with $n$ leaves.
Set $\bm G_n^b=\Cograph(\bm b_n)$.
From \cref{prop:CaracterisationBrownianCographon}, $\bm G_n^b$ has the same distribution as $\Sample_n(\bm W^{1/2})$,
so that $W_{\bm G_n^b}$ converges in distribution to $\bm W^{1/2}$ (\cref{lem:sample_converges_to_W}).

Consider a uniform random vertex $\bm v_n$ in $\bm G_n^b$.
Thanks to \cref{prop:degree}, $\mathrm{Law}\big(\tfrac 1 n \deg_{\bm G_n^b}(\bm v_n)\big)$ converges to $I[D_{\bm W^{1/2}}]$.
Proving the following claim will therefore conclude the proof of the proposition.
\medskip

{\em Claim.} The law of $\deg(\bm v_n)$ in $\bm G_n^b$ is the uniform law in $\{0,1,\cdots,n-1\}$.
\smallskip

{\em Proof of the claim.}
We start by defining two operations for deterministic $\Zero/\One$-decorated plane labeled binary trees $b$.
\begin{itemize}
  \item First, we consider a (seemingly unnatural\footnote{This order is actually very natural if we interpret $b$ as the separation tree of a separable permutation (see \cite{Nous1} for the definition). 
  It is simply the value order on the elements of the permutation.}) order on the leaves of $b$.
    To compare two leaves $\ell$ and $r$, we look at their first common ancestor $u$ and
    assume w.l.o.g. that $\ell$ and $r$ are descendants of its left and right children, respectively.
    If $u$ has decoration $\Zero$, we declare $\ell$ to be smaller than $r$;
    if it has decoration $\One$, then $r$ is smaller than $\ell$. It is easy to check that this defines
    a total order on the leaves of $b$ (if we flip the left and right subtrees of internal nodes with decoration $\One$,
    this is simply the left-to-right depth-first order of the leaves).
    We write $\rank_b(\ell)$ for the rank of a leaf $\ell$ in this order.
  \item Second, we define an involution $\Phi$ on the set of $\Zero/\One$-decorated plane labeled binary trees $b$
    with a distinguished leaf $\ell$.
    We keep the undecorated structure of the tree, and simply flip the decorations of all the ancestors of $\ell$ which have $\ell$ as a descendant {\em of their right child}.
    This gives a new decorated plane labeled binary tree $b'$ and
    we set $\Phi(b,\ell)=(b',\ell)$.
\end{itemize}
Consider $b$ as above, with two distinguished leaves $\ell$ and $\tilde{\ell}$.
The corresponding vertices $v$ and $\tilde{v}$ in $G=\Cograph(b)$ are connected if and only if the first common ancestor $u$
of $\ell$ and $\tilde{\ell}$ in $b$ has decoration $\One$. Setting $\Phi(b,\ell)=(b',\ell)$,
this happens in two cases:
\begin{itemize}
  \item either $\ell$ is a descendant of the left child of $u$,
    and $u$ has decoration $\One$ in $b'$;
  \item or $\ell$ is a descendant of the right child of $u$,
    and $u$ has decoration $\Zero$ in $b'$;
\end{itemize}
This corresponds exactly to $\ell$ being bigger than $\tilde{\ell}$ in the order associated to $b'$.
Consequently, $\deg_G(v)$ is the number of leaves smaller than $\ell$ in that order, \emph{i.e.}
\begin{equation}
  \deg_G(v) =\rank_{b'}(\ell)-1. 
  \label{eq:deg_rank}
\end{equation}

Recall now that $\bm G_n^b=\Cograph(\bm b_n)$, where $\bm b_n$ is a uniform $\Zero/\One$-decorated plane labeled binary tree with $n$ leaves.
The uniform random vertex $\bm v_n$ in $\bm G_n^b$ corresponds to a uniform random leaf $\bm \ell_n$ in $\bm b_n$.
Set $(\bm b'_n,\bm \ell_n)=\Phi(\bm b_n,\bm \ell_n)$.
Since $\Phi$ is an involution, $(\bm b'_n,\bm \ell_n)$ is a uniform $\Zero/\One$-decorated plane labeled binary tree 
of size $n$ with a uniform random leaf $\bm \ell_n$.
Conditioning on $\bm b'_n$, the rank $\rank_{\bm b'_n}(\bm \ell_n)$ is a uniform random variable in $\{1,\cdots,n\}$.
The same holds taking $\bm b'_n$ at random, and,
using \cref{eq:deg_rank}, we conclude that $\deg_{\bm G_n^b}(\bm v_n)$ is
a uniform random variable in $\{0,\cdots,n-1\}$.

This proves the claim, and hence the proposition.
\end{proof}

\begin{remark}
  It seems likely that this result can also be proved by working only
  in the continuous. In particular, using a result of Bertoin and Pitman 
  \cite[Theorem 3.2]{BertoinPitman},
  the degree $D(x)=\int_y \bm W^{1/2}(x,y) dy$ of a uniform random $x$ in $[0,1]$
  in the Brownian cographon corresponds to the cumulated length of
  a half of the excursions in a Brownian bridge.
\end{remark}

%
%
%

%%%%%%%%%%%%%%%%%%%%%%%%%%%%%%%%%%%%%%%%%%%%%%%%%%%%%%%%%%%%%%%%%%%%%%%%%%%%%%%%%%%%%%%%%%%%%%%%%%%%%%%%%%%%%%%%%%%%
\section{Convergence of labeled cographs to the Brownian cographon}\label{sec:proofLabeled}

In this section, we are interested in labeled cographs with $n$ vertices,
which are in one-to-one correspondence with labeled canonical cotrees with $n$ leaves (\cref{prop:canonical_cotree}).

  To study these objects, we use the framework of labeled combinatorial classes,
as presented in the seminal book of Flajolet and Sedgewick \cite[Chapter II]{Violet}.
In this framework, an object of size $n$ has $n$ atoms, which are labeled bijectively
with integers from $1$ to $n$. For us, the atoms are simply the leaves of the trees,
which is consistent with \cref{def:cotree}.
We will also consider (co)trees with marked leaves and, here,
more care is needed.
Indeed, in some instances, those marked leaves have a label
(and thus should be seen as atoms and counted in the size of the objects),
while, in other instances, they do not have a label 
(and are therefore not counted in the size of the object).
To make the distinction, we will refer to marked leaves of the latter type (\emph{i.e.} without labels)
as {\em blossoms} and reserve {\em marked leaves} for those carrying labels.

\subsection{Enumeration of labeled canonical cotrees}

Let $\mathcal{L}$ be the family of non-plane labeled rooted trees in which internal nodes have at least two  children. %
For $n\geq 1$, let $\ell_n$ be the number of trees with $n$ leaves in $\mathcal{L}$.
Let $L(z)$ denote the corresponding exponential generating function:
$$
L(z)=\sum_{n\geq 1} \frac{\ell_n}{n!}z^n.
$$

\begin{proposition}\label{prop:SeriesS}
  The series $L(z)$ is the unique formal power series without constant term solution of
\begin{equation}\label{eq:SerieS}
L(z)=z+\exp(L(z))-1-L(z).
\end{equation}
\end{proposition}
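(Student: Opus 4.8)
The plan is to use the symbolic method of Flajolet and Sedgewick to translate the recursive structure of trees in $\mathcal{L}$ into a functional equation for $L(z)$. First I would observe that a tree in $\mathcal{L}$ is either a single leaf (contributing the term $z$) or a root with an unordered collection of at least two subtrees, each itself a tree in $\mathcal{L}$. Since the trees are non-plane (children unordered) and labeled (leaves carry distinct labels forming an atom set), the relevant construction is the \textsc{set} construction of labeled combinatorial classes: an unordered collection of $k$ structures from a class with EGF $L(z)$ has EGF $L(z)^k/k!$. Requiring at least two children means we sum over $k \geq 2$, giving $\sum_{k\geq 2} L(z)^k/k! = \exp(L(z)) - 1 - L(z)$.

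Adding the single-leaf case, the symbolic method yields directly
\[
L(z) = z + \bigl(\exp(L(z)) - 1 - L(z)\bigr),
\]
which is \cref{eq:SerieS}. This is the heart of the argument; the remaining point is uniqueness of the solution as a formal power series without constant term. For that I would rewrite the equation as $2L(z) - z = \exp(L(z)) - 1$, or equivalently extract coefficients: writing $L(z) = \sum_{n\geq 1} a_n z^n$ with $a_n = \ell_n/n!$, the right-hand side $\exp(L(z)) - 1 - L(z)$ has its $z^n$-coefficient determined entirely by $a_1, \dots, a_{n-1}$ (since $\exp(L(z)) - 1 - L(z)$ starts at order $2$ and the coefficient of $z^n$ in $\exp(L(z))$ involves only products of $a_i$ with $i < n$ once we subtract the linear term). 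Hence the relation $a_n = [z^n]\bigl(z + \exp(L) - 1 - L\bigr)$ determines $a_n$ recursively and uniquely from the initial data, with $a_1 = 1$; this proves that the formal power series solution without constant term is unique, and by the combinatorial derivation it must be $L(z)$.

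The main (mild) obstacle is simply being careful about the bookkeeping in the \textsc{set} construction: one must check that in the labeled setting the leaves of the subtrees partition the label set $\{1,\dots,n\}$, so that an internal node really does correspond to a set (not a multiset or sequence) of subtrees, and that the ``at least two children'' condition removes exactly the $k=0$ and $k=1$ terms. One should also note that no alternation or decoration condition is imposed here — $\mathcal{L}$ is the plain class of non-plane labeled trees with internal nodes of arity $\geq 2$, consistent with \cref{def:cotree} minus the $\Zero/\One$ decorations — so the functional equation is exactly the unadorned one above. No analytic input is needed for this proposition; singularity analysis of $L(z)$ is deferred to the later asymptotic statements.
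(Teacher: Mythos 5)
Your proposal is correct and follows essentially the same route as the paper: the same decomposition of a tree in $\mathcal{L}$ as either a single leaf or a root with an unordered collection of at least two subtrees, translated via the labeled \textsc{set} construction into $L = z + \exp(L)-1-L$, and the same uniqueness argument by noting that the right-hand side determines $[z^n]L$ from the coefficients of lower order. Your extra remarks on the label-set bookkeeping and on the absence of decorations are fine but not needed beyond what the paper's proof already states.
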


\begin{proof}%
(This series is treated in \cite[Example VII.12 p.472]{Violet}.) 
A tree in $\mathcal{L}$ consists of
\begin{itemize}
\item either a single leaf (counted by $z$) ;
\item or a root to which is attached an unordered sequence of at least two  trees of $\mathcal{L}$ (counted by $\sum_{k\geq 2}L^k/k!= e^L-1-L$).
\end{itemize}
This justifies that $L(z)$ satisfies \cref{eq:SerieS}.
The uniqueness is straightforward,
since \cref{eq:SerieS} determines for every $n$ the coefficient of $z^n$ in $L(z)$ from those of $z^k$ for $k<n$. 
\end{proof}

Computing the first coefficients, we find
$$
L(z)=z+\frac{z^2}{2!}+4\frac{z^3}{3!}+26\frac{z^4}{4!}+236\frac{z^5}{5!}+2752\frac{z^6}{6!} \mathcal{O}(z^7).
$$
These numbers correspond to the fourth Schr\"oder's problem
(see \href{http://oeis.org/A000311}{Sequence A000311} in \cite{SloaneSchroder}).\medskip

Let $m_n$ be the number of labeled canonical cotrees with $n$ leaves. We have $m_1=1$ and $m_n=2\,\ell_n$ for $n\geq 2$.
Indeed to each tree of $\mathcal{L}$ containing internal nodes (\emph{i.e.}, with at least two leaves) there correspond two canonical cotrees:
one with the root decorated by $\Zero$ and one with the root decorated by $\One$ (the other decorations are then determined by the alternation condition).
The exponential generating series $M(z)=\sum_{n\geq 1} \frac{m_n}{n!}z^n$ of labeled canonical cotrees (or equivalently of labeled cographs) 
thus satisfies $M(z)=2L(z)-z$. Combining this with \cref{prop:SeriesS}, we have 
\begin{equation}\label{eq:Lien_T_expS}
M(z)=\exp(L(z))-1.
\end{equation}

It is standard (and easy to see) that the series 
$$
L'(z)=\sum_{n\geq 1} \frac{\ell_n}{(n-1)!}z^{n-1} \quad \text{ and } \quad L^\bullet(z)=zL'(z)=\sum_{n\geq 1} \frac{\ell_n}{(n-1)!}z^{n}
$$
counts trees of $\mathcal{L}$ with a blossom or a marked leaf, repectively. 
In the subsequent analysis we need to consider the generating function
$L^\text{even}$ (resp. $L^\text{odd}$) counting
trees of $\mathcal{L}$ having a blossom at even (resp. odd) distance from the root.
Obviously, $L^\text{even}+L^\text{odd} =L'$.
\begin{proposition}\label{prop:SeriesSeven}
We have the following identities
\begin{align}
L^\text{even}&=\frac{1}{e^L(2-e^L)}, \label{eq:SevenExplicite}\\
L^\text{odd}&=\frac{e^L-1}{e^L(2-e^L)}. \label{eq:SoddExplicite}
\end{align}
\end{proposition}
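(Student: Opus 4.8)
The plan is to derive the two identities by a combinatorial decomposition according to the position of the blossom, setting up a linear system of two equations in the two unknowns $L^\even$ and $L^\odd$. First I would write down the equation for a tree of $\mathcal{L}$ with a blossom at even distance from the root. Such a tree is either the trivial tree consisting of a single blossom at distance $0$ from the root (which contributes $1$ to $L^\even$), or it has a root with an unordered collection of at least two subtrees, exactly one of which is a $\mathcal{L}$-tree carrying the blossom \emph{at odd distance from its own root} (since going through the root of the big tree flips the parity), while the remaining subtrees are unmarked trees of $\mathcal{L}$. Using the pointing/marking formalism for labeled classes (the "boxed product" / marking of one component in a set), the unordered collection of at least two subtrees, one of them marked, is counted by $L^\odd \cdot (e^L-1)$: the marked subtree contributes $L^\odd$, and the nonempty set of remaining subtrees contributes $e^L - 1$. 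Hence
\begin{equation}\label{eq:even_rec}
L^\even = 1 + L^\odd\,(e^L-1).
\end{equation}

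Next I would do the same for a blossom at odd distance. Now the trivial tree is excluded (the blossom is at distance $\geq 1$), so the tree necessarily has a root, one marked subtree carrying its blossom at \emph{even} distance from its own root, and a nonempty set of further unmarked $\mathcal{L}$-subtrees. This gives
\begin{equation}\label{eq:odd_rec}
L^\odd = L^\even\,(e^L-1).
\end{equation}
(One should double-check the edge case: when the marked subtree is itself a single blossom, the big tree has its blossom at distance $1$, which is odd, consistent with $L^\even$ containing the constant term $1$.) Solving the linear system \eqref{eq:even_rec}--\eqref{eq:odd_rec}: substituting \eqref{eq:odd_rec} into \eqref{eq:even_rec} yields $L^\even = 1 + L^\even (e^L-1)^2$, hence $L^\even\big(1-(e^L-1)^2\big) = 1$, i.e. $L^\even = \dfrac{1}{1-(e^L-1)^2} = \dfrac{1}{(2-e^L)\,e^L}$, which is \eqref{eq:SevenExplicite}. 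Then \eqref{eq:odd_rec} gives $L^\odd = \dfrac{e^L-1}{(2-e^L)\,e^L}$, which is \eqref{eq:SoddExplicite}. As a sanity check, $L^\even + L^\odd = \dfrac{1 + (e^L-1)}{(2-e^L)e^L} = \dfrac{1}{2-e^L}$, and indeed differentiating \eqref{eq:SerieS} gives $L'(1 - e^L + 1) = 1$, i.e. $L' = \dfrac{1}{2-e^L}$, matching $L^\even+L^\odd = L'$ as noted in the excerpt.

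The only genuinely delicate point — and the step I expect to require the most care rather than the most difficulty — is getting the combinatorial bookkeeping of the marked subtree exactly right in the labeled (exponential generating function) setting: one must be sure that "a set of at least two $\mathcal{L}$-trees with one of them distinguished and carrying a blossom" translates to the product $L^{\bullet\text{-type}} \cdot (e^L-1)$ and not, say, to something involving $L' e^L$ or an extra symmetry factor. The clean way to justify this is to recall that marking one component in a labeled $\textsc{Set}$ construction, $\textsc{Set}(\mathcal{A})$, has generating function $A'(z)\,e^{A(z)}$ when the marked component ranges over all of $\mathcal{A}$; here the marked component ranges over the \emph{blossomed} class (whose EGF is $L^\odd$ or $L^\even$, already a derivative-type series, not itself differentiated again) and the remaining components form a nonempty set, giving the factor $e^L-1$ rather than $e^L$ because the total number of children is at least two. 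I would spell this out in one or two sentences, then the two recurrences and the linear-algebra solution are immediate, and the proof is complete.
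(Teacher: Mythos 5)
Your proposal is correct and follows essentially the same route as the paper: the same decomposition of a blossomed tree as either a single blossom or a root carrying one subtree with the blossom (of flipped parity) together with a nonempty set of unmarked trees, yielding the system $L^\even = 1 + (e^L-1)L^\odd$, $L^\odd = (e^L-1)L^\even$, which is then solved. The bookkeeping point you flag is handled correctly: since the blossom carries no label, the marked subtree is counted directly by $L^\odd$ (resp.\ $L^\even$) and the at-least-two-children condition forces the factor $e^L-1$ rather than $e^L$, exactly as in the paper's argument.
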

\begin{proof}%
We first claim that
\begin{equation}\label{eq:SevenSoddImplicite}
\begin{cases}
L^\text{even}&=1+ (e^L-1)L^\text{odd},\\
L^\text{odd}&= (e^L-1)L^\text{even}.
\end{cases}
\end{equation}
We prove the first identity, the second one is proved similarly. 
A tree counted by $L^\text{even}$ is
\begin{itemize}
\item either reduced to a blossom (therefore the tree has size $0$, \emph{i.e.} is counted by $1$);
\item or has a root to which are attached 
\begin{itemize}
\item a tree with a blossom at odd height (counted by $L^\text{odd}$), and
\item an unordered sequence of at least one unmarked trees (counted by $\sum_{k\geq 1}L^k/k!= e^L-1$).
\end{itemize}
\end{itemize}
We obtain the proposition by solving \cref{eq:SevenSoddImplicite}.
\end{proof}

\smallskip

\subsection{Enumeration of canonical cotrees with marked leaves inducing a given cotree} 
For a labeled (not necessarily canonical) cotree $t_0$ of size $k$, we consider the family $\mathcal{M}_{t_0}$ of tuples $\left(t;\ell_1,\dots,\ell_k\right)$ where 
\begin{itemize}
\item $t$ is a labeled canonical cotree;
\item $(\ell_1,\dots,\ell_k)$ is a $k$-tuple of distinct leaves of $t$;
\item the subtree of $t$ induced by $(\ell_1,\dots,\ell_k)$ is $t_0$.
\end{itemize}
We denote by $M_{t_0}$ the associated exponential generating function.
\begin{theorem}\label{th:SerieT_t0}
Let $t_0$ be a labeled cotree with $k$ leaves. 
Denote by $n_v$ its number of internal nodes, by $n_=$ its number of edges of the form  $\Zero-\Zero$ or $\One-\One$, and by $n_{\neq}$ its number of  edges of the form  $\Zero-\One$ or $\One-\Zero$.
We have the identity
\begin{equation}\label{Eq:ComptageLabeled}
M_{t_0} = (L') (\exp(L))^{n_v} (L^\bullet)^k (L^\text{odd})^{n_=} (L^\text{even})^{n_{\neq}}.
\end{equation}
\end{theorem}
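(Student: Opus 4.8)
The plan is to prove \cref{Eq:ComptageLabeled} by describing the combinatorial structure of a generic element $(t;\ell_1,\dots,\ell_k)$ of $\mathcal{M}_{t_0}$ in terms of pieces that are each counted by one of the factors on the right-hand side. The basic idea is that such a $t$ is obtained from $t_0$ by ``inflating'' $t_0$: between consecutive nodes of $t_0$ (seen inside $t$) we may insert chains of extra internal nodes of $t$, and at every internal node of $t$ — whether it comes from $t_0$ or is newly inserted — we may hang additional unmarked subtrees of $\mathcal{L}$; moreover each marked leaf $\ell_i$ sits at the bottom of its own chain, and there is one more chain hanging above the root of $t_0$. So I would set up a bijection between $\mathcal{M}_{t_0}$ and a product of combinatorial classes, one factor per structural location in $t_0$.

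In detail, here is the decomposition I would carry out. First, above the image of the root of $t_0$ in $t$ there is a (possibly empty) chain of internal nodes of $t$ leading to the root of $t$; together with the unmarked subtrees hanging off that chain, this is exactly a tree of $\mathcal{L}$ with a blossom marking the place where $t_0$ attaches — hence a factor $L'$. (Here the blossom has no parity constraint, since the root of $t$ can carry either decoration and there is no alternation condition propagating from above.) Second, each of the $k$ marked leaves $\ell_i$ is obtained from the corresponding leaf of $t_0$ by possibly creating a chain of internal nodes of $t$ between $\ell_i$ and its parent in $t_0$; this is a tree of $\mathcal{L}$ with a marked leaf, counted by $L^\bullet$, giving $(L^\bullet)^k$. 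Third, each of the $n_v$ internal nodes $v$ of $t_0$ survives as an internal node of $t$, at which we may additionally hang an unordered (possibly empty) collection of unmarked $\mathcal{L}$-trees; this contributes $\exp(L)$ per internal node, i.e. $(\exp L)^{n_v}$. Fourth and most delicate, for each edge of $t_0$, say from an internal node $u$ to a child $w$, we insert in $t$ a chain of internal nodes between $u$ and $w$, each new node again decorated with an unordered collection of $\mathcal{L}$-trees; because $t$ is canonical, the decorations along the whole path from $u$ to $w$ in $t$ must strictly alternate. This forces the number of inserted nodes to have a fixed parity: it must be even when $u$ and $w$ already have opposite decorations in $t_0$ (an edge counted by $n_{\neq}$), and odd when $u,w$ carry the same decoration in $t_0$ (an edge counted by $n_=$). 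A chain with an even (resp. odd) number of inserted internal nodes, each carrying its $\mathcal{L}$-decorations, with a blossom at the bottom marking the attachment of $w$, is precisely a tree counted by $L^\text{even}$ (resp. $L^\text{odd}$) from \cref{prop:SeriesSeven} — reading the blossom as being at even resp. odd distance from the root of that auxiliary tree. This yields the factors $(L^\text{odd})^{n_=}(L^\text{even})^{n_{\neq}}$.

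Multiplying these contributions over all structural locations of $t_0$, and invoking the product and (labeled) sequence/set constructions of the symbolic method \cite[Chapter II]{Violet} to justify that the generating function of a product of independent structures is the product of the generating functions, gives exactly \cref{Eq:ComptageLabeled}. Two bookkeeping points deserve care and I would spell them out: (i) the labels. The blossoms and the extra leaves of the inserted $\mathcal{L}$-trees carry labels in $t$, whereas $t_0$ itself is labeled only on its $k$ leaves; since we work with exponential generating functions, the relabeling across the several factors is automatically handled by the labeled product, and the $k$ marked leaves of $t_0$ are exactly the atoms tracked by the $(L^\bullet)^k$ factor, so the size of $t$ is correctly the total number of leaves. (ii) One must check the decomposition is a genuine bijection: given $(t;\ell_1,\dots,\ell_k)$ one recovers $t_0$ as the induced subtree (by \cref{dfn:induced_cotree}), and then each listed piece of $t$ is determined; conversely, reassembling the pieces along the skeleton $t_0$ yields a canonical cotree because we imposed exactly the parity constraints that make the decorations alternate.

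The main obstacle, and the step I would write most carefully, is the alternation/parity analysis of the inserted chains: one has to be precise that along the path in $t$ realizing an edge of $t_0$, it is the two \emph{endpoint} decorations inherited from $t_0$ that constrain the parity of the number of interior nodes, and that in the ``above-the-root'' chain there is \emph{no} such constraint (hence $L'$ and not $L^\text{even}$ or $L^\text{odd}$ there), while the chains below marked leaves are likewise unconstrained at the bottom (hence $L^\bullet$). Once this is correctly tabulated in terms of $n_v$, $n_=$, $n_{\neq}$, the rest is a routine application of the symbolic method.
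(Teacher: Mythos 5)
Your proposal is correct and is essentially the paper's own argument: your ``inflation'' of $t_0$ produces exactly the paper's decomposition of $t$ into an above-the-root piece ($L'$), one marked-leaf piece per leaf of $t_0$ ($L^\bullet$), one possibly empty forest per internal node of $t_0$ ($\exp(L)$), and one chain piece per internal edge whose parity constraint you analyze correctly (odd number of inserted nodes for $\Zero-\Zero$/$\One-\One$ edges giving $L^{\text{odd}}$, even for $\Zero-\One$/$\One-\Zero$ edges giving $L^{\text{even}}$), with decorations forced by alternation just as in the paper. The only difference is presentational (you build $t$ from $t_0$ rather than cutting $t$ into pieces), so no further comparison is needed.
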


\begin{figure}[htbp]
\begin{center}
\includegraphics[width=10cm]{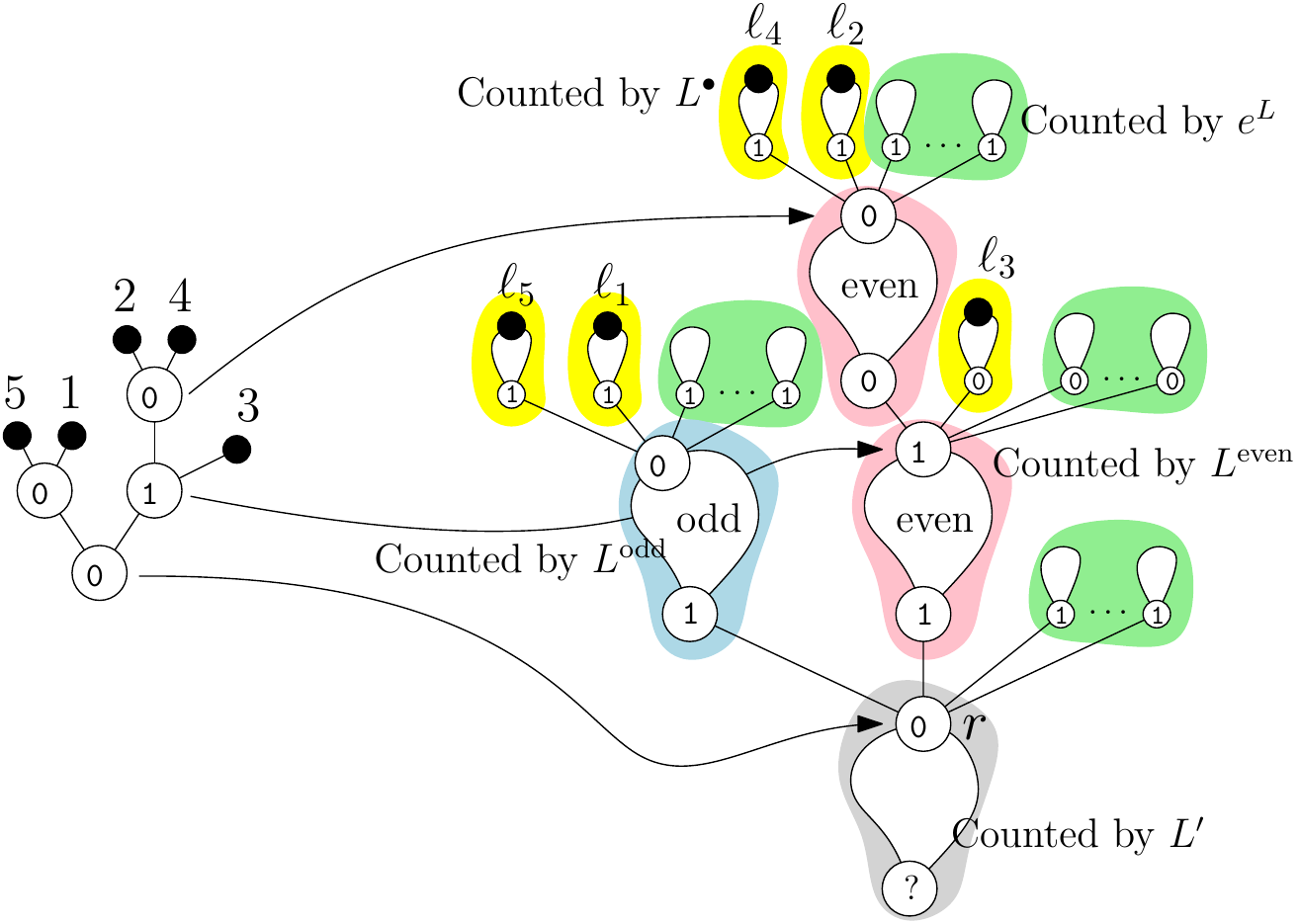}
\caption{On the left: a (non-canonical) labeled cotree $t_0$ of size $5$. On the right: a schematic view of a canonical cotree in $\mathcal{M}_{t_0}$.}
\label{fig:SqueletteStyliseCographe}
\end{center}
\end{figure}

\begin{proof}%
\emph{(Main notations of the proof are summarized in \cref{fig:SqueletteStyliseCographe}.)}

Let $\left(t;\ell_1,\dots,\ell_k\right) \in \mathcal{M}_{t_0}$.
There is a correspondence between the nodes of $t_0$ and some nodes of $t$,
mapping leaves to marked leaves and internal nodes to first common ancestors of marked leaves.
These first common ancestors of marked leaves in $t$ will be refered to as {\em branching nodes} below.
In order to prove \cref{Eq:ComptageLabeled} we will decompose each such $t$ into subtrees,
called {\em pieces}, of five different types: \emph{pink, blue, yellow, green} and \emph{gray} 
(see the color coding\footnote{We apologize to the readers to whom only a black-and-white version were available.} in \cref{fig:SqueletteStyliseCographe}). 
Our decomposition has the following property:
to reconstruct an element of $\mathcal{M}_{t_0}$,
we can choose each piece independently in a set depending on its color only,
so that the generating series of $\mathcal{M}_{t_0}$ writes as a product
of the generating series of the pieces.

In this decomposition,
there is exactly one gray piece 
obtained by pruning $t$ at the node $r$ of $t$ corresponding to the root of $t_0$. 
In this piece, $r$ is replaced by a blossom.
We note that, by definition of induced cotree, the decoration of $r$ has to match that of the root of $t_0$.
Since decorations in canonical cotrees must alternate,
this determines all decorations in the gray piece.
Possible choices for the gray piece are therefore counted
by the same series as undecorated trees with a blossom, 
that is $L'$.

For the rest of the decomposition, we 
consider branching nodes $v$ of $t$ (including $r$), and look at all children $w$ of such nodes $v$.
\begin{itemize}
 \item If such a node $w$ has exactly one descendant (possibly, $w$ itself) which is a marked leaf, 
 we build a piece, colored yellow, by taking the fringe subtree rooted at $w$. 
   Yellow pieces are labeled canonical cotrees with one marked leaf. 
   However, the decoration within the yellow piece is forced by the alternation of decorations
    in $t$ and by the decoration of the parent $v$ of $w$, which has to match the decoration
    of the corresponding node in $t_0$ (see \cref{fig:SqueletteStyliseCographe}). 
   So the generating function for yellow pieces is $L^\bullet$.

   Of course, we have a yellow piece for each marked leaf of $t$, \emph{i.e.} for each leaf of $t_0$. 

 \item If a node $w$ child of a branching node in $t$ has at least two marked leaves among its descendants, 
 it must also have a descendant (possibly equal to $w$) that is a branching node.
 We define $v'$ as the branching node descending from $w$ (possibly equal to it) which is the closest to $w$. 
 This implies that the node of $t_0$ corresponding to $v'$ (denoted $v'_0$) is a child of the one corresponding to $v$ (denoted $v_0$). 
 We build a piece rooted at $w$, which corresponds to the edge $(v_0,v'_0)$ of $t_0$. 
 This piece is the fringe subtree rooted at $w$ pruned at $v'$,
    {\em i.e.} where $v'$ is replaced by a blossom.
    We color it blue if the blossom is at odd distance from $w$, pink otherwise.
    The generating functions for blue and pink pieces are therefore $L^{\text{odd}}$ and $L^{\text{even}}$,
    respectively (since again all decorations in the piece are dertermined by the one of $v_0$).
    
    Because of the alternation of decoration, the piece is blue if and only if
    $w$ and $v'$ have different decorations in $t$,
    which happens if and only if $v$ and $v'$ (or equivalently, $v_0$ and $v'_0$) have the same decoration.
    We therefore have a blue piece for each internal edge of $t_0$ with extremities 
    with the same decoration, and a pink piece for each internal edge of $t_0$
    with extremities with different decorations.

\item All other nodes $w$ have no marked leaf among their descendants. 
We group all such nodes $w$ that are siblings to build a single green piece, 
attached to their common parent $v$. 
Namely, for each branching node $v$, we consider all its children $w$ having no marked leaf as a descendant (possibly, there are none), 
and we define the green piece attached to $v$ as the (possibly empty) forest of fringe subtrees of $t$ rooted at these nodes $w$.
  Green pieces are forest, \emph{i.e.} unordered set of labeled canonical cotrees.
    The decoration within the green piece is forced by the alternation of decorations
    in $t$ and by the decoration of $v$, which as before has to match the decoration
    of the corresponding node in $t_0$.
    Therefore, choosing a green piece amounts to choosing an unordered set of undecorated trees in $\mathcal L$.
    We conclude that possible choices for each green piece are counted by $e^L$. 
    
    Finally, we recall that there is one (possibly empty) green piece for each branching node of $t$, \emph{i.e.}
    for each internal node of $t_0$.    
\end{itemize}

Since $t_0$ is a labeled cotree, leaves / internal nodes / edges of $t_0$ can be ordered in a canonical way.
Since yellow (resp. green, resp. blue and pink) pieces in the above decomposition are indexed by leaves (resp. internal nodes, resp. edges) of $t_0$,
they can be ordered in a canonical way as well.
Moreover, the correspondence between marked trees $(t;\ell_1,\cdots,\ell_k)$
in $\mathcal{M}_{t_0}$ and tuples of colored pieces is one-to-one.
This completes the proof of \cref{Eq:ComptageLabeled}.
\end{proof}
\subsection{Asymptotic analysis}
  Following Flajolet and Sedgewick \cite[p. 389]{Violet}, we say that a power series is $\Delta$-analytic
if it is analytic in some $\Delta$-domain $\Delta(\phi,\rho)$, where $\rho$ is its radius of convergence.
This is a technical hypothesis, which enables to apply the transfer theorem \cite[Corollary VI.1 p. 392]{Violet};
all series in this paper are $\Delta$-analytic.%
\begin{proposition}\label{prop:Asympt_S_Seven}
  The series $L(z)$ has radius of convergence $\rho=2\log(2)-1$ and is $\Delta$-analytic.
Moreover, the series $L$ is convergent at $z=\rho$ and we have
\begin{equation}\label{eq:Asympt_S}
L(z)\underset{z\to \rho}{=}\log(2)-\sqrt{\rho}\sqrt{1-\tfrac{z}{\rho}} +\mathcal{O}(1-\tfrac{z}{\rho}).
\end{equation}
\end{proposition}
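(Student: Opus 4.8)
The plan is to apply the standard machinery for implicitly-defined power series of the "smooth implicit function schema" type, as developed in Flajolet--Sedgewick \cite[Section VII.4]{Violet}. Write the functional equation \eqref{eq:SerieS} in the form $L = \Psi(z,L)$ with $\Psi(z,u) = z + e^u - 1 - u$. This $\Psi$ is entire in both variables, has nonnegative Taylor coefficients, and $\Psi(0,0)=0$, $\partial_u\Psi(0,0) = 0$, so that $L$ is a well-defined power series with nonnegative coefficients (as already observed in \cref{prop:SeriesS}), with some radius of convergence $\rho \in (0,+\infty]$. One checks $\rho<\infty$ because the coefficients of $L$ grow (they count the Schröder numbers A000311), so $L$ cannot be entire.

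First I would locate $\rho$ via the characteristic system of the implicit function schema: the singularity arises at the unique positive $(z,u)$ with
\[
u = \Psi(z,u), \qquad 1 = \partial_u \Psi(z,u).
\]
Here $\partial_u\Psi(z,u) = e^u - 1$, so the second equation gives $e^u = 2$, i.e. $u = \log 2$. Substituting into the first equation: $\log 2 = z + 2 - 1 - \log 2$, hence $z = 2\log 2 - 1 =: \rho$. One must check $\rho>0$ (indeed $2\log 2 \approx 1.386 > 1$) and that this is genuinely the dominant singularity, i.e. that $L$ remains analytic for $|z|<\rho$; this follows from the positivity of coefficients (so the singularity on the positive axis is dominant, by Pringsheim) together with the fact that $(\rho,\log 2)$ solves the characteristic system with $L(\rho) = \log 2 < \infty$ — the value of $L$ at its singularity is finite, which is the hallmark of a square-root (rather than a pole or logarithmic) singularity in this schema.

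Next, to get the singular expansion \eqref{eq:Asympt_S}, I would Taylor-expand the equation $L = \Psi(z,L)$ around the point $(z,u) = (\rho,\log 2)$. Write $L = \log 2 + h$ and $z = \rho(1 - \zeta)$ with $\zeta = 1 - z/\rho$ small. Using $e^{\log 2 + h} = 2 e^h = 2(1 + h + h^2/2 + \cdots)$, equation \eqref{eq:SerieS} becomes
\[
\log 2 + h = \rho(1-\zeta) + 2(1 + h + \tfrac{h^2}{2} + \cdots) - 1 - (\log 2 + h),
\]
and since $\log 2 = \rho + 1 - \log 2$ the constant terms cancel; the linear terms in $h$ also cancel (this is exactly the $\partial_u\Psi = 1$ condition), leaving
\[
0 = -\rho\,\zeta + h^2 + O(h^3) + O(\zeta h),
\]
so $h^2 \sim \rho\,\zeta$, i.e. $h = L(z) - \log 2 = -\sqrt{\rho}\,\sqrt{1-z/\rho} + O(1-z/\rho)$ (the minus sign because $L$ increases toward $\log 2$ from below as $z\uparrow\rho$, its coefficients being positive). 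This is precisely \eqref{eq:Asympt_S}. The $\Delta$-analyticity is the routine part: it follows from the analytic implicit function theorem, which gives analyticity of $L$ in a neighborhood of each point of $\{|z|=\rho\}\setminus\{\rho\}$ (no other singularity on the circle, again by the aperiodicity/positivity of coefficients — here $\Psi$ has nonzero coefficients in consecutive degrees so there is no periodicity obstruction), combined with the local square-root behavior near $\rho$ which extends analytically into a slit neighborhood.

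The main obstacle is not any single computation — each step is a textbook application — but rather verifying carefully the hypotheses of the smooth implicit-function schema in \cite[Thm. VII.3]{Violet}: namely that $\Psi$ is analytic in a suitable bidisk containing $(\rho,\log 2)$ (here trivial since $\Psi$ is entire), that the system has a positive solution within the domain of analyticity (checked above), that $\partial_z\Psi(\rho,\log 2) = 1 \neq 0$ and $\partial_{uu}\Psi(\rho,\log 2) = e^{\log 2} = 2 \neq 0$ (both needed for the square-root type and for the nonvanishing of the constant $\sqrt\rho$ in front of $\sqrt{1-z/\rho}$), and the aperiodicity condition ensuring $\rho$ is the unique dominant singularity. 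Once these are in place, \eqref{eq:Asympt_S} and $\Delta$-analyticity are immediate. Alternatively one can simply invoke \cite[Example VII.12, p.472]{Violet} where this exact series is analyzed, which is the shortest route.
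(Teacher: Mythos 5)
Your proposal is correct and follows essentially the same route as the paper: the paper's proof is a one-line invocation of a packaged result (Theorem 1 of [BMN16], with the series also treated in \cite[Example VII.12]{Violet}), which is itself an instance of the smooth implicit-function schema you apply; your characteristic-system computation ($e^u=2$, $u=\log 2$, $\rho=2\log 2-1$) and the local expansion giving the constant $\sqrt{2\rho\,\partial_z\Psi/\partial_{uu}\Psi}=\sqrt{\rho}$ match the stated expansion. So the only difference is that you unpack the hypotheses and the square-root expansion explicitly instead of citing the ready-made theorem.
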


\begin{proof}%
Using \cref{prop:SeriesS}, \cref{prop:Asympt_S_Seven} is a direct application of \cite[Theorem 1]{MathildeMarniCyril}. 
\end{proof}

It follows from \cref{prop:Asympt_S_Seven} 
that $L'$, $\exp(L)$, $L^{\text{even}}$ and $L^{\text{odd}}$ also have radius of convergence $\rho=2\log(2)-1$,
are all $\Delta$-analytic
and that their behaviors near $\rho$ are 
\begin{equation}
  L'(z) \underset{z\to \rho}{\sim} \frac{1}{2\sqrt{\rho}}\left(1-\tfrac{z}{\rho}\right)^{-1/2} \label{eq:asymLprime};
  \qquad   \exp(L(z))  \underset{z\to \rho}{\sim} 2; %
  \end{equation}
    \begin{equation}
  L^{\text{even}}(z)  \underset{z\to \rho}{\sim} \frac{1}{4\sqrt{\rho}}\left(1-\tfrac{z}{\rho}\right)^{-1/2};  %
  \qquad
L^{\text{odd}}(z)  \underset{z\to \rho}{\sim} \frac{1}{4\sqrt{\rho}}\left(1-\tfrac{z}{\rho}\right)^{-1/2}.  \label{eq:asymLodd}
\end{equation}
Indeed, the first estimate follows from \cref{eq:Asympt_S} by singular differentiation \cite[Thm. VI.8 p. 419]{Violet}, 
while the third and fourth ones are simple computations using \cref{eq:SevenExplicite} and \cref{eq:SoddExplicite}.

\medskip

\subsection{Distribution of induced subtrees of uniform cotrees}
We take a uniform labeled canonical cotree $\bm t^{(n)}$ with $n$ leaves. 
We also choose uniformly at random a $k$-tuple $(\bm\ell_1,\cdots,\bm\ell_k)$ of distinct leaves of $\bm t^{(n)}$.
Equivalently, $(\bm t^{(n)};\bm\ell_1,\cdots,\bm\ell_k)$ is chosen uniformly at random 
among labeled canonical cotrees of size $n$ with $k$ marked leaves.
We denote by $\mathbf{t}^{(n)}_k$ the labeled cotree induced by the $k$ marked leaves.

\begin{proposition}\label{prop:proba_arbre}
Let $k\geq 2$, and let $t_0$ be a labeled binary cotree with $k$ leaves. Then
\begin{equation}\label{eq:proba_asymptotique_t0}
\proba (\mathbf{t}^{(n)}_k = t_0) \underset{n\to+\infty}{\to} 
\displaystyle{\frac{(k-1)!}{(2k-2)!}}. %
\end{equation}
\end{proposition}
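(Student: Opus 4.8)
The plan is to compute $\proba(\mathbf{t}^{(n)}_k = t_0)$ as a ratio of exponential generating function coefficients and then extract asymptotics via singularity analysis. Precisely, since $(\bm t^{(n)};\bm\ell_1,\dots,\bm\ell_k)$ is uniform among labeled canonical cotrees of size $n$ with $k$ marked (ordered, distinct) leaves, we have
\[
\proba(\mathbf{t}^{(n)}_k = t_0) = \frac{[z^n] M_{t_0}(z)}{[z^n]\big(\sum_{\text{canonical cotrees } t \text{ of size } n \text{ with } k \text{ marked leaves}} \tfrac{1}{n!} z^n\big)}.
\]
The denominator's generating series is the $k$-th ``pointing'' of $M(z)$, namely $z^k \frac{d^k}{dz^k} M(z)$ up to the usual combinatorial bookkeeping; more simply, the number of canonical cotrees of size $n$ with $k$ marked distinct ordered leaves is $m_n \cdot n(n-1)\cdots(n-k+1)$, so the denominator coefficient is $\frac{m_n}{n!} \cdot n^{\underline{k}}$. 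For the numerator I would invoke \cref{th:SerieT_t0}: since $t_0$ is a \emph{binary} cotree with $k$ leaves, it has $n_v = k-1$ internal nodes and $n_= + n_{\neq} = k-2$ internal edges, so
\[
M_{t_0} = L' \cdot (\exp L)^{k-1} \cdot (L^\bullet)^{k} \cdot (L^{\text{odd}})^{n_=} \cdot (L^{\text{even}})^{n_{\neq}}.
\]

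Next I would carry out the singularity analysis. By \cref{prop:Asympt_S_Seven} and the estimates \eqref{eq:asymLprime}--\eqref{eq:asymLodd}, near $z=\rho$ we have $L' \sim \tfrac{1}{2\sqrt\rho}(1-z/\rho)^{-1/2}$, $\exp(L)\to 2$, $L^\bullet = zL' \sim \tfrac{\sqrt\rho}{2}(1-z/\rho)^{-1/2}$ wait --- $L^\bullet(\rho) = \rho L'(\rho)$ diverges, so $L^\bullet \sim \tfrac{\rho}{2\sqrt\rho}(1-z/\rho)^{-1/2} = \tfrac{\sqrt\rho}{2}(1-z/\rho)^{-1/2}$, and $L^{\text{even}}, L^{\text{odd}} \sim \tfrac{1}{4\sqrt\rho}(1-z/\rho)^{-1/2}$. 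The crucial point is that $L^{\text{even}}$ and $L^{\text{odd}}$ have the \emph{same} leading singular behavior, so the product $(L^{\text{odd}})^{n_=}(L^{\text{even}})^{n_{\neq}} \sim \big(\tfrac{1}{4\sqrt\rho}\big)^{k-2}(1-z/\rho)^{-(k-2)/2}$ regardless of the precise split $n_= + n_{\neq} = k-2$ --- this is exactly why the limit depends only on $k$ and not on the shape of $t_0$. Multiplying all factors:
\[
M_{t_0}(z) \underset{z\to\rho}{\sim} \frac{1}{2\sqrt\rho}\cdot 2^{k-1}\cdot\Big(\frac{\sqrt\rho}{2}\Big)^{k}\cdot\Big(\frac{1}{4\sqrt\rho}\Big)^{k-2}\,\Big(1-\tfrac z\rho\Big)^{-1/2-k/2-(k-2)/2} = C_k\,\Big(1-\tfrac z\rho\Big)^{-(2k-1)/2}
\]
for an explicit constant $C_k$ (which I would simplify: collecting powers of $2$ and $\sqrt\rho$ gives $C_k = \tfrac{1}{2}\rho^{1-k/2}$ after cancellation, but I'd double-check the arithmetic). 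By the transfer theorem \cite[Cor. VI.1]{Violet}, $[z^n]M_{t_0}(z) \sim \tfrac{C_k}{\Gamma((2k-1)/2)}\rho^{-n} n^{(2k-3)/2}$.

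For the denominator, $\tfrac{m_n}{n!} = 2[z^n]L(z)$ for $n\ge 2$ (the single $z$ term being negligible), and from \eqref{eq:Asympt_S} we get $[z^n]L(z) \sim \tfrac{\sqrt\rho}{2\Gamma(-1/2)}\rho^{-n}n^{-3/2} = \tfrac{\sqrt\rho}{2\cdot(-2\sqrt\pi)}\rho^{-n}n^{-3/2}$; multiplying by $n^{\underline k} \sim n^k$ gives the denominator coefficient $\sim (\text{const})\,\rho^{-n} n^{k-3/2}$. Taking the ratio, the $\rho^{-n}$ factors cancel and the powers of $n$ match ($n^{(2k-3)/2} = n^{k-3/2}$), leaving a finite limit. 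I would then simplify the resulting constant and check it equals $\tfrac{(k-1)!}{(2k-2)!}$; alternatively, to avoid a long computation, I can appeal to the argument in \cref{lem:factorisation}: the limit must be a probability distribution over labeled binary cotrees that is uniform (since it does not depend on $t_0$), and there are $\tfrac{(2k-2)!}{(k-1)!}$ such cotrees, forcing the limit to be $\tfrac{(k-1)!}{(2k-2)!}$. The main obstacle is purely bookkeeping: keeping track of the exponents of $(1-z/\rho)$, of $2$, and of $\sqrt\rho$ across all five families of pieces without error, and making sure the sub-polynomial corrections ($\mathcal O$-terms in the singular expansions) genuinely do not affect the leading coefficient --- i.e., that each factor is $\Delta$-analytic with a clean algebraic singularity, which is guaranteed since all the relevant series are rational functions of $L$ and $\exp(L)$ composed with a $\Delta$-analytic $L$.
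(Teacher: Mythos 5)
Your proposal follows essentially the same route as the paper's proof: the probability is written as the coefficient ratio built from $M_{t_0}$ and $M$, \cref{th:SerieT_t0} is applied with $n_v=k-1$ and $n_=+n_{\neq}=k-2$, and the limit is extracted by singularity analysis and the transfer theorem, with the key observation (also the paper's) that $L^{\even}$ and $L^{\odd}$ have the same singular behavior so only $k$ matters. Two flagged constants need correcting when you carry out the bookkeeping: the prefactor of $(1-z/\rho)^{-(k-1/2)}$ is $\sqrt{\rho}/2^{2k-2}$ (not $\tfrac12\rho^{1-k/2}$), and $[z^n]L(z)\sim \tfrac{\sqrt{\rho}}{2\sqrt{\pi}}\,\rho^{-n}n^{-3/2}$ (your displayed constant is off by a factor $-1/2$); with these the ratio comes out exactly $\tfrac{(k-1)!}{(2k-2)!}$, as in the paper. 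Note also that your shortcut via normalization is not immediate: knowing the limit is the same constant for every binary $t_0$ does not by itself force it to equal $1/\mathrm{card}(\mathcal C_k)$ unless you additionally argue that non-binary induced cotrees have vanishing probability (which does follow from the same analysis, since $n_v<k-1$ lowers the singular exponent), so the direct computation of the constant, as in the paper, is the cleaner finish.
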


\begin{proof}
We fix a labeled binary cotree  $t_0$ with $k$ leaves.
From the definitions of $\mathbf{t}^{(n)}_k$, $M$ and $M_{t_0}$ we have for $n \geq k$ (we use the standard notation $[z^n]A(z)$ for the $n$-th coefficient of a power series $A$)
\begin{equation}
  \proba(\mathbf{t}^{(n)}_k=t_0) = \frac{n! [z^{n}] M_{t_0}(z)}{n \cdots (n-k+1)\, n! [z^n] M(z)}.
  \label{eq:probat0_quotient}
\end{equation}
Indeed, the denominator counts the total number of labeled canonical cotrees $(t;\ell_1,\cdots,\ell_k)$
of size $n$ with $k$ marked leaves.
The numerator counts those tuples, for which $(\ell_1,\cdots,\ell_k)$ induce
the subtree $t_0$. The quotient is therefore the desired probability.

By \cref{th:SerieT_t0}, and using the notation introduced therein, we have
$$
M_{t_0}=  (L') (\exp(L))^{n_v} (L^\bullet)^k (L^\text{odd})^{n_=} (L^\text{even})^{n_{\neq}}.
$$
Since $t_0$ is binary, we have $n_v=k-1$ and $n_= +n_{\neq}=k-2$.
We now consider the asymptotics around $z=\rho$.
Using \cref{eq:asymLprime} and \eqref{eq:asymLodd} and recalling that $L^\bullet(z)=zL'(z)$, we get

\begin{align*}
M_{t_0}(z)
&\underset{z\to \rho}{\sim} \rho^k \left(\frac{1}{2\sqrt{\rho}}\left(1-\tfrac{z}{\rho}\right)^{-1/2}\right)^{k+1} 2^{k-1} \left(\frac{1}{4\sqrt{\rho}}\left(1-\tfrac{z}{\rho}\right)^{-1/2}\right)^{k-2}\\
&\underset{z\to \rho}{\sim} \frac{{\rho}^{1/2}}{2^{2k-2}} \left(1-\tfrac{z}{\rho}\right)^{-(k-1/2)}.
\end{align*}
By the transfer theorem (\cite[Corollary VI.1 p.392]{Violet}) we obtain 
$$
[z^{n}] M_{t_0}(z) \underset{n\to +\infty}{\sim} \frac{{\rho}^{1/2}}{2^{2k-2}\rho^{n}} \frac{n^{k-3/2}}{\Gamma(k-1/2)}
= \frac{(k-1)!}{\sqrt{\pi}(2k-2)!} \frac{n^{k-3/2}}{\rho^{n-1/2}}.
$$
Applying again the transfer theorem to $M(z)=2L(z)-z$ whose asymptotics is given in \cref{eq:Asympt_S}, we also have
$$
n(n-1)\dots (n-k+1) [z^n] M(z) \underset{n\to +\infty}{\sim}
n^k (-2\sqrt{\rho})\frac{n^{-3/2}}{\rho^n \Gamma(-1/2)}
\sim  \frac{n^{k-3/2}}{\rho^{n-1/2} \sqrt{\pi}}.
$$
Finally,
$
\proba(\mathbf{t}^{(n)}_k=t_0) \to   \frac{(k-1)!}{(2k-2)!}.
$
\end{proof}

\subsection{Proof of \cref{th:MainTheorem,th:DegreeRandomVertex} in the labeled case}\label{SousSec:PreuveEtiquete}
\ 
Since labeled canonical cotrees and labeled cographs are in bijection, $\Cograph(\bm t^{(n)})$ is a uniform labeled cograph of size $n$, \emph{i.e.} is equal to $\bm G_n$ in distribution.
 Thus \cref{th:MainTheorem} follows from \cref{lem:factorisation} and \cref{prop:proba_arbre}.
 \cref{th:DegreeRandomVertex} is now a consequence of  \cref{th:MainTheorem},
 combined with \cref{prop:degree,prop:IntensityW12}.

\section{Convergence of unlabeled cographs to the Brownian cographon}
\label{sec:unlabeled}

\subsection{Reducing unlabeled canonical cotrees to labeled objects}
\label{ssec:unlabeled_to_labeled}
In this section, we are interested in unlabeled cographs. They are in one-to-one correspondence with unlabeled canonical cotrees. 
We denote by $\overline{\mathcal V}$ the class of unlabeled canonical cotrees and by $\overline{\mathcal U}$ the class of rooted non-plane unlabeled trees with no unary nodes, counted by the number of leaves.
If $V$ and $U$ are their respective ordinary generating functions, then clearly, $V(z) = 2U(z)-z$.

The class $\overline{\mathcal U}$ may be counted using the multiset construction and the P\'olya exponential \cite[Thm. I.1]{Violet}:
a tree of $\overline{\mathcal U}$ is either a single leaf or a multiset of cardinality at least $2$ of trees of $\overline{\mathcal U}$,
yielding the following equation:
\begin{equation}
\label{eq:U}
U(z) = z + \exp\left( \sum_{r\geq 1} \frac 1 r\ U(z^r) \right) -1 - U(z).
\end{equation}

As in the labeled case, we want to count the number of pairs $(t,I)$ where $t$ is a cotree of $\overline{\mathcal V}$ with $n$ leaves, and $I$ is a $k$-tuple of leaves of $t$ 
(considered labeled by the order in which they appear in the tuple), 
such that the subtree induced by $I$ in $t$ is a given labeled cotree $t_0$. 
\medskip

To that end, we would need to refine \cref{eq:U} to count trees with marked leaves, inducing a given subtree,
in a similar spirit as in \cref{th:SerieT_t0}.
There is however a major difficulty here, which we now explain.
There are two ways of looking at tuples of marked leaves in unlabeled trees.
\begin{itemize}
\item We consider pairs $(t,I)$, where $t$ is a labeled tree and $I$ a $k$-tuple
of leaves of $t$. Then we look at orbits $\overline{(t,I)}$ of such pairs under the natural relabeling action.
\item Or we first consider orbits $\overline{t}$ of labeled trees under the relabeling action,
{\em i.e.} unlabeled trees.
For each such orbit we fix a representative and consider pairs $(\overline{t},I)$,
where $I$ is a $k$-tuple of leaves of the representative of $\overline{t}$.
\end{itemize}
In the second model, every unlabeled tree has exactly $\binom{n}{k}$ marked versions,
which is not the case in the first model\footnote{\emph{E.g.}, the tree with three leaves all attached to the root, two of which are marked, has only one marked version in the first model.}.
Consequently, if we take an element uniformly at random in the second model,
the underlying unlabeled tree is a uniform unlabeled tree,
while this property does not hold in the first model.

Our goal is to study uniform random unlabeled cographs of size $n$, where we next choose a uniform random $k$-tuple of leaves. 
This corresponds exactly to the second model.

The problem is that combinatorial decompositions of unlabeled combinatorial classes
are suited to study the first model (unlabeled objects are orbits of labeled objects under relabeling).
In particular, \cref{th:SerieT_t0} has an easy analogue for counting unlabeled trees with marked leaves 
inducing a given labeled cotree in the first sense, but not in the second sense.

To overcome this difficulty, we consider the following labeled combinatorial class:
\[ {\mathcal U} = \{ (t,a) : t\in \mathcal L, a \text{ a root-preserving automorphism of }t\}\]
where $\mathcal{L}$ is the family of non-plane labeled rooted trees in which internal nodes have at least two children,
studied in \cref{sec:proofLabeled}.
We define the size of an element $(t,a)$ of ${\mathcal U}$ as the number of leaves of $t$.
This set is relevant because of the following easy but key observation.
\begin{proposition}%
	Let $\Phi$ denote the operation of forgetting both the labels and the automorphism. 
	Then, $\Phi({\mathcal U})= \overline {\mathcal U}$ and
	every $t\in  \overline {\mathcal U}$ of size $n$ has exactly $n!$ preimages by $\Phi$.
	As a result, the ordinary generating series $U\!$ of $\,\overline {\mathcal U}$
	 equals the exponential generating function of ${\mathcal U}$
	and the image by $\Phi$ of a uniform random element of size $n$ in ${\mathcal U}$ is a uniform random element of size $n$ in $ \overline {\mathcal U}$.
\end{proposition}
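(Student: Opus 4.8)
The plan is to establish the three assertions of the proposition in turn, the middle one being the substantive combinatorial identity.

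\emph{First}, the identity $\Phi(\mathcal{U}) = \overline{\mathcal{U}}$. Given $(t,a) \in \mathcal{U}$, forgetting the labels turns $t$ into an unlabeled non-plane rooted tree with no unary nodes, i.e. an element of $\overline{\mathcal{U}}$; so $\Phi(\mathcal{U}) \subseteq \overline{\mathcal{U}}$. Conversely, any $\bar{t} \in \overline{\mathcal{U}}$ can be given an arbitrary labeling of its leaves by $\{1,\dots,n\}$, producing some $t \in \mathcal{L}$, and then paired with the identity automorphism $\mathrm{id}_t$, which is always root-preserving; hence $(t,\mathrm{id}_t) \in \mathcal{U}$ and $\Phi(t,\mathrm{id}_t) = \bar{t}$. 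This gives $\Phi(\mathcal{U}) = \overline{\mathcal{U}}$.

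\emph{Second} — and this is the crux — I would show that each $\bar{t} \in \overline{\mathcal{U}}$ of size $n$ has exactly $n!$ preimages under $\Phi$. Fix one labeled representative $t_\star \in \mathcal{L}$ of $\bar{t}$. The preimages of $\bar{t}$ are the pairs $(t,a)$ with $t \cong t_\star$ (as unlabeled trees) and $a$ a root-preserving automorphism of $t$. For a fixed $t$ in this isomorphism class, the number of such $a$ is $|\mathrm{Aut}(t)|$, where $\mathrm{Aut}(t)$ denotes the group of root-preserving automorphisms (all automorphisms of a rooted tree fix the root, so this is just $\mathrm{Aut}(t)$ as a rooted tree). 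Since the labelings of $\bar{t}$'s $n$ leaves that give rise to the \emph{same} labeled tree $t$ form exactly the orbit of the $\mathrm{Aut}(t_\star)$-action on labelings, there are $n!/|\mathrm{Aut}(t_\star)|$ distinct labeled trees $t$ in the isomorphism class of $\bar{t}$, and $|\mathrm{Aut}(t)| = |\mathrm{Aut}(t_\star)|$ for each since they are all isomorphic. Therefore the total number of preimages is
\[
\sum_{t \,\cong\, t_\star} |\mathrm{Aut}(t)| \;=\; \frac{n!}{|\mathrm{Aut}(t_\star)|} \cdot |\mathrm{Aut}(t_\star)| \;=\; n!.
\]
The main obstacle is simply setting up this orbit-counting argument cleanly; it is the classical principle that a labeled structure with $n$ labels has $n!/|\mathrm{Aut}|$ labelings, dualized so that the automorphism factor is \emph{restored} by ranging over pairs $(t,a)$. (Equivalently, one may phrase it as: the orbit–stabilizer theorem applied to the $\mathfrak{S}_n$-action on labeled trees in the class of $\bar t$.)

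\emph{Third}, I deduce the generating-function and uniform-sampling consequences. Writing $\ell_n = |\{t \in \mathcal{L} \text{ of size } n\}|$ and $u_n = |\{\bar{t} \in \overline{\mathcal{U}} \text{ of size } n\}|$, the exponential generating function of $\mathcal{U}$ has $n$-th coefficient $\frac{1}{n!}|\{(t,a) \in \mathcal{U} \text{ of size } n\}| = \frac{1}{n!}\sum_{\bar{t}}(\#\Phi^{-1}(\bar{t})) = \frac{1}{n!}\cdot u_n \cdot n! = u_n$, which is precisely the $n$-th coefficient of the ordinary generating function $U$ of $\overline{\mathcal{U}}$; hence the two series coincide. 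Finally, since $\Phi$ is exactly $n!$-to-$1$ on objects of size $n$, the pushforward under $\Phi$ of the uniform distribution on size-$n$ elements of $\mathcal{U}$ assigns to each $\bar{t} \in \overline{\mathcal{U}}$ of size $n$ mass $\frac{n!}{|\mathcal{U}_n|} = \frac{n!}{n!\,u_n} = \frac{1}{u_n}$, i.e. is the uniform distribution on $\overline{\mathcal{U}}$. This completes the proof.
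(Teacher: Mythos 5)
Your proof is correct and follows essentially the same route as the paper, which simply notes that the number of preimages of $\bar t$ is (number of automorphisms) $\times$ (number of distinct labelings) $= n!$ by the orbit--stabilizer theorem, and that the generating-function and uniformity claims follow immediately. You merely spell out this orbit-counting argument (and the easy surjectivity and pushforward steps) in more detail, so there is nothing to change.
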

\begin{proof}
The number of preimages of $t\in \overline {\mathcal U}$ is the number of automorphisms of $t$ times the number of distinct labelings of $t$, which equals $n!$ by the orbit-stabilizer theorem. The other claims follow immediately.
\end{proof}

Working with ${\mathcal U}$ instead of $ \overline {\mathcal U}$ solves
the issue raised above concerning marking, since we have labeled objects.
However the additional structure (the automorphism) has to be taken into account 
in combinatorial decompositions, but this turns out to be tractable (at least asymptotically).

\subsection{Combinatorial decomposition of $\mathcal{U}$}

We first describe a method for decomposing pairs $(t,a)$ in $\mathcal U$ at the root of $t$, 
which explains \emph{combinatorially}
why the exponential generating function $U$ of $\mathcal U$ satisfies \cref{eq:U}.
This combinatorial interpretation of \cref{eq:U} is necessary for the refinement
with marked leaves done in the next section.

Let $(t,a)\in {\mathcal U}$. Then $t$ is a non-plane rooted labeled tree with no unary nodes and $a$ is one of its root-preserving automorphisms. Assuming $t$ is of size at least two, we denote by $v_1,\ldots v_d$ the children of the root, and $t_1,\ldots, t_d$ the fringe subtrees rooted at these nodes, respectively.

Because $a$ is a root-preserving automorphism, it preserves the set of children of the root, hence there exists a permutation $\pi\in \Sn_d$ such that $a(v_i) = v_{\pi(i)}$ for all $1\leq i \leq d$.
Moreover, we have necessarily $a(t_i) = t_{\pi(i)}$ for all $1\leq i \leq d$.

Let $\pi = \prod_{s=1}^p c_s$ be the decomposition of $\pi$ into disjoint cycles, including cycles of length one.
Let $c_s = (i_1,\ldots,i_r)$ be one of them. 
We consider the forest $t(c_s)$ formed by the trees $t_{i_1},\ldots,t_{i_r}$. 
Then the pair $(t(c_s),a_{|t(c_s)})$ lies in the class $\mathcal C_r$
of pairs $(f,a)$, where $f$ is a forest of $r$ trees and $a$ an automorphism
of $f$ acting cyclically on the components of $f$.

The tree $t$ can be recovered by adding a root to $\biguplus_{s=1}^p t(c_s)$.
Moreover, $a$ is clearly determined by $(a_{|t(c_s)})_{1\leq s\leq p}$.
So we can recover $(t,a)$ knowing  $(t(c_s),a_{|t(c_s)})_{1\leq s\leq p}$.
Recall that the cycles $c_s$ indexing the latter vector are the cycles of the permutation $\pi$,
which has size at least $2$ (the root of $t$ has degree at least $2$).
Since permutations $\pi$ are sets of cycles,
we get the following decomposition of $\mathcal U$
(using as usual $\mathcal Z$ for the atomic class,
representing here the single tree with one leaf):
\begin{equation}
  \label{eq:Dec_U_Cr}
  \mathcal U= \mathcal Z \, \uplus \, \textsc{Set}_{\ge 1}\Big(\biguplus_{r\geq 1} \,\mathcal C_r\Big) \setminus \mathcal C_1,
\end{equation}

We then relate $\mathcal C_r$ to $\mathcal U$ to turn \cref{eq:Dec_U_Cr} into a recursive equation.
Let $(f,a)$ be an element of $\mathcal C_r$,
and $t$ be one of the component of $f$.
We write $f=\{t_1,\cdots,t_r\}$
such that $t_1=t$ and $a$ acts on these components by
$t_1 \overset a \to t_2 \overset a \to \cdots \overset a \to t_r \overset a \to t_1$
(this numbering of the components of $f$ is uniquely determined by $t$).
We then encode $(f,a)$
by a unique tree $\widehat t$ isomorphic to $t_1$, with multiple labelings, 
\emph{i.e.} each leaf $\widehat v\in \widehat t$, corresponding to $v\in t_1$, is labeled by
$(v,a(v), a^2(v),\ldots, a^{r-1}(v))$.
Finally, $a^r$ induces an automorphism of $\widehat t$.
Consequently, $(\widehat t, a^r)$ is an element of the combinatorial class $\mathcal U \circ \mathcal Z^r$, 
\emph{i.e.} an element of $\mathcal U$ %
where each atom (here, each leaf of the tree) carries a vector of $r$ labels;
the size of an element of  $\mathcal U \circ \mathcal Z^r$ is the total number of labels,
\emph{i.e.} $r$ times the number of leaves of $\widehat t$.
The forest $f$ and its marked component $t$ are trivially recovered from $(\widehat t, a^r)$. 
Since a forest automorphism is determined by its values on leaves,
 we can recover $a$ as well. 

 This construction defines a size-preserving bijection between triples $(f,a,t)$,
 where $(f,a)$ is in $\mathcal C_r$ and $t$ one of the component of $f$,
 and elements of $\mathcal U \circ \mathcal Z^r$.
 Forgetting the marked component $t$, this defines an $r$-to-$1$ size-preserving correspondence
 from $\mathcal C_r$ to $\mathcal U \circ \mathcal Z^r$.
 Together with \cref{eq:Dec_U_Cr},
this gives the desired combinatorial explanation to the fact
that the exponential generating function of $\mathcal U$ satisfies \cref{eq:U}.
\medskip

We now introduce the combinatorial class $\mathcal D$ of trees in $\mathcal U$
with size $\geq 2$ such that no child of the root is fixed by the automorphism.
This means that there is no cycle of size $1$ in the above decomposition of $\pi$ into cycles.
 Therefore, the exponential generating function of $\mathcal D$ satisfies 
\begin{equation}
\label{eq:D}
D(z) = \exp\left( \sum_{r\geq 2} \frac 1 r\ U(z^r) \right) -1 .%
\end{equation}
Note that introducing the series $D$ is classical when applying the method of singularity analysis on
unlabeled unrooted structures (aka P\'olya structures), see, {\em e.g.}, \cite[p 476]{Violet}. 
However, interpreting it combinatorially with objects of $\mathcal D$ is not standard, but necessary for our purpose. 

In the sequel, for $k\geq 0$, we write $\exp_{\geq k}(z) = \sum_{z\geq k} \frac {z^k}{k!}$.
Algebraic manipulations from \cref{eq:U} allow to rewrite the equation for $U$ as
\begin{equation}
\label{eq:U2}
U = z + \exp_{\geq 2}(U)+ D\exp(U).
\end{equation}
Moreover, \cref{eq:U2} has a combinatorial interpretation. 
Indeed, pairs $(t,a)$ in $\mathcal U$ of size at least $2$ can be split into two families as follows.
\begin{itemize}
\item The first family consists in pairs $(t,a)$,
for which all children of the root are fixed by the automorphism $a$; 
adapting the above combinatorial argument, we see that the generating series
of this family is $\exp_{\geq 2}(U)$ (recall that the root has at least 2 children).
\item The second family consists in pairs $(t,a)$,
where some children of the root are moved by the automorphism $a$.
Taking the root, its children moved by $a$ and their descendants give a tree $t_1$
such that $(t_1,a_{|t_1})$ is in $\mathcal D$.
Each child $c$ of the root fixed by $a$ with its descendants form a tree $t_c$
such that $(t_c,a_{|t_c})$ is in $\mathcal U$.
We have a (possibly empty) unordered set of such children. 
Therefore, elements in this second family are described as pairs consisting of 
an element of $\mathcal D$ and a (possibly empty) unordered set of elements of $\mathcal U$, 
so that the generating series of this second family is $D\exp(U)$. 
\end{itemize}
Bringing the two cases together, we obtain a combinatorial interpretation of \cref{eq:U2}.
Again, this combinatorial interpretation will be important later, when refining with marked leaves.
\medskip

We can now turn to defining the combinatorial classes that will appear in our decomposition. 
Similarly to the case of labeled cographs, we will need to consider objects of $\mathcal U$ 
(recall that these are \emph{labeled} objects) where some leaves are marked. 
Here again, we need to distinguish 
marked leaves carrying a label
(contributing to the size of the objects), 
and leave not carrying any label 
(not counted in the size).
We keep the terminology of our section on labeled cographs, namely 
we call \emph{blossoms} marked leaves of the latter type (\emph{i.e.} without labels)
and we reserve {\em marked leaves} for those carrying labels.

We let $\mathcal{U}^\bullet$ (resp. $\mathcal U'$) be the combinatorial class of pairs $(t,a)$ in $\mathcal U$ 
with a marked leaf (resp. blossom) in $t$.
Their exponential generating functions are respectively $zU'(z)$ and $U'(z)$ (the derivative of $U(z)$). 
We also define $\mathcal U^\star \subset \mathcal U'$ as the class of pairs $(t,a)$ in $\mathcal U$ 
with a blossom in $t$ {\em which is fixed by $a$}. 
Finally, we decompose $\mathcal U^\star$ as $\mathcal U^\star = \mathcal U^{\even} \uplus \mathcal U^{\odd}$, according to the parity of the distance from the root to the blossom. 
We denote by $U^\star$, $U^{\even}$ and $U^{\odd}$ the exponential generating functions of these classes, respectively. 
\begin{proposition}
	We have the following equations:
	\begin{align}
	& \hspace*{1cm} U^\star \, \, = \ 1 + U^\star\exp_{\geq 1}(U)+ U^\star D\exp(U), \label{eq:Ustar}\\
	& \begin{cases}
	U^{\even} &= \ 1 + U^{\odd}\exp_{\geq 1}(U)+ U^{\odd} D\exp(U),\\
	U^{\odd} &= \ U^{\even}\exp_{\geq 1}(U)+ U^{\even} D\exp(U). \label{eq:Uevenodd}
	\end{cases}
	\end{align}
\end{proposition}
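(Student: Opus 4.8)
The plan is to establish each of the three equations combinatorially, mimicking the decompositions that explained \cref{eq:U2} but keeping track of the distinguished blossom throughout. The key point is that in a pair $(t,a) \in \mathcal U^\star$, the blossom is fixed by $a$, hence so is the entire path from the root to the blossom; the combinatorial recursion will follow the first step of this path.

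First I would prove \cref{eq:Ustar}. Let $(t,a) \in \mathcal U^\star$. If $t$ is reduced to the blossom, we get the term $1$. Otherwise the root has at least two children, one of which (say $c$) is the ancestor of the blossom; since the blossom is fixed by $a$ and lies below $c$, the child $c$ itself is fixed by $a$. The fringe subtree $t_c$ rooted at $c$, together with $a_{|t_c}$ and the blossom, is an element of $\mathcal U^\star$ — this is where the factor $U^\star$ comes from. The remaining children of the root and their subtrees are decomposed exactly as in the two cases leading to \cref{eq:U2}: those fixed by $a$ contribute an unordered set of elements of $\mathcal U$ (but now the set may be empty only if the $D$-part is nonempty — all told, the complement of $c$'s subtree must contain at least one more child), and the children moved by $a$, if any, form (with the root) an element of $\mathcal D$. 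Carefully, since we have already used up the child $c$ carrying the blossom, the ``extra'' children of the root form either a nonempty unordered set of elements of $\mathcal U$ (generating series $\exp_{\geq 1}(U)$), or an element of $\mathcal D$ together with an arbitrary unordered set of elements of $\mathcal U$ (generating series $D\exp(U)$). This yields $U^\star = 1 + U^\star \exp_{\geq 1}(U) + U^\star D\exp(U)$.

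Then I would prove \cref{eq:Uevenodd} by the same decomposition, now tracking parity. In $\mathcal U^{\even}$, the distance from the root to the blossom is even. If $t$ is the blossom alone, the distance is $0$, even, contributing $1$. Otherwise, passing from the root to the child $c$ decreases the blossom-to-root distance by one, so the sub-object $(t_c,a_{|t_c},\text{blossom}) \in \mathcal U^{\odd}$; the remaining children decompose as before, giving $U^{\even} = 1 + U^{\odd}\exp_{\geq 1}(U) + U^{\odd}D\exp(U)$. For $\mathcal U^{\odd}$, the distance is odd, so $t$ is never reduced to the blossom (hence no ``$1$'' term), and descending to $c$ lands in $\mathcal U^{\even}$, giving $U^{\odd} = U^{\even}\exp_{\geq 1}(U) + U^{\even}D\exp(U)$.

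The main obstacle I anticipate is bookkeeping around the ``extra children'' of the root: one must verify that removing the distinguished child $c$ from the root's children, and then re-decomposing the rest according to whether they are fixed or moved by $a$, produces exactly the generating series $\exp_{\geq 1}(U) + D\exp(U)$ and not something with a different lower bound on the number of children (the root of $t$ has degree $\geq 2$, so after removing $c$ at least one child remains, which is what forces $\exp_{\geq 1}$ rather than $\exp_{\geq 0}$ in the first summand, while in the $D\exp(U)$ summand the $D$-part already guarantees nonemptiness). One should also check that the decomposition is a genuine bijection — that $(t,a)$ is recovered from the sub-object at $c$ and the decomposition of the remaining children, using as before that a forest automorphism is determined by its action on leaves. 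Once these points are checked, solving the linear systems \cref{eq:Ustar,eq:Uevenodd} is immediate and gives closed forms analogous to \cref{eq:SevenExplicite,eq:SoddExplicite}.
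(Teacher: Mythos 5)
Your proposal is correct and follows essentially the same route as the paper: the paper also obtains \cref{eq:Ustar} by refining the combinatorial decomposition behind \cref{eq:U2}, using the observation that a blossom fixed by the automorphism has all its ancestors fixed, so it sits in a fixed child of the root, and the parity equations \cref{eq:Uevenodd} follow in the same way. You merely spell out the bookkeeping (the $\exp_{\geq 1}$ versus $\exp$ factors and the bijectivity of the decomposition) that the paper leaves implicit.
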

\begin{proof}
Note that if a blossom is required to be fixed by the automorphism, then all of its ancestors are also fixed by the automorphism. 
Then, the equation of $U^\star$ is obtained by the same decomposition as for \cref{eq:U2}, imposing that the blossom belongs to a subtree attached to a child of the root which is fixed by the automorphism. 
The other two equations follow immediately.
\end{proof}

\subsection{Enumeration of canonical cotrees with marked leaves inducing a given cotree}
We first define ${\mathcal V}$ as the class of pairs $(t,a)$,
where $t$ is a labeled canonical cotree and $a$ a root-preserving automorphism of $t$.
As for ${\mathcal U}$ and $ \overline {\mathcal U}$,
we have a $n!$-to-1 map from ${\mathcal V}$ to $ \overline {\mathcal V}$
and $V$ can be seen either as the ordinary generating function of $ \overline {\mathcal V}$
or the exponential generating function of $\mathcal V$.%

We would like to find a combinatorial decomposition
of  pairs in $\mathcal V$ with marked leaves inducing a given cotree.
It turns out that it is simpler and sufficient for us to work with a smaller class,
which we now define. 
\begin{definition}
	Let $t_0$ be a labeled cotree of size $k$. Let $\mathcal V_{t_0}$ be the class 
	of tuples $(t,a;\ell_1,\dots,\ell_k)$, where $(t,a)$ is in $\mathcal V$
	and $\ell_1$, \dots, $\ell_k$ are distinct leaves of $t$ (referred to as {\em marked leaves})
	such that
	\begin{itemize}
		\item the marked leaves induce the subtree $t_0$;
		\item the following nodes are fixed by $a$: 
		all first common ancestors of the marked leaves, and their children leading to a marked leaf.
	\end{itemize}
	\label{Def:V_t_0}
\end{definition}
We note that, because of the second item in the above definition,
not all tuples $(t,a;\ell_1,\dots,\ell_k)$ (where $(t,a)$ is in $\mathcal V$
and $\ell_1$, \dots, $\ell_k$ are leaves of $t$) belong to some $\mathcal V_{t_0}$.
However, we will see below (as a consequence of \cref{prop:proba_arbre_unlabeled})
that asymptotically almost all tuples $(t,a;\ell_1,\dots,\ell_k)$ do belong to some $\mathcal V_{t_0}$
(even if we restrict to binary cotrees $t_0$, which is similar to the previous section).

Let $V_{t_0}$ be the exponential generating series of $\mathcal V_{t_0}$;
it is given by the following result.
\begin{theorem}\label{th:V_t_0}
	Let $t_0$ be a labeled cotree with $k$ leaves, $n_v$ internal nodes, $n_=$ edges of the form  $\Zero-\Zero$ or $\One-\One$, $n_{\neq}$  edges of the form  $\Zero-\One$ or $\One-\Zero$.
	We have the identity
	$$
	V_{t_0}=(U^\star)(2U+1-z)^{n_v} (U^\bullet)^k (U^\odd)^{n_=} (U^\even)^{n_{\neq}}.
	$$
\end{theorem}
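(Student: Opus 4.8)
The plan is to mirror the proof of \cref{th:SerieT_t0} from the labeled case, decomposing an element $(t,a;\ell_1,\dots,\ell_k)$ of $\mathcal V_{t_0}$ into independently choosable colored pieces (pink, blue, yellow, green, gray), and then to check that the extra structure carried by the automorphism $a$ is, piece by piece, exactly what is enumerated by the ``starred'' series $U^\star$, $U^\bullet$, $U^\odd$, $U^\even$ and by the building block $2U+1-z = z + 2\exp_{\ge 2}(U)+2D\exp(U)$ replacing $\exp(L)$. The key point that makes this work is the second bullet of \cref{Def:V_t_0}: all first common ancestors of the marked leaves (the branching nodes) and all of their children that lead to a marked leaf are fixed by $a$. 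Consequently the ``skeleton'' of $t$ corresponding to $t_0$ is pointwise fixed by $a$, and $a$ acts independently on each of the dangling pieces hanging off this skeleton — so the generating function factorizes just as in the labeled case, with each factor now counting a piece-with-automorphism.

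The steps, in order, would be as follows. First, recall the branching nodes and the correspondence between nodes of $t_0$ and nodes of $t$, exactly as in the proof of \cref{th:SerieT_t0}. Second, cut $t$ at the node $r$ corresponding to the root of $t_0$ to obtain the \emph{gray piece}: it is a pair $(t',a')$ in $\mathcal U$ with a blossom (replacing $r$), and that blossom is fixed by $a'$ since $r$ is; moreover the alternation condition forces all decorations once the decoration of $r$ is fixed, so the gray piece is counted by $U^\star$ (the decorations are rigid, matching the passage from $L'$ to $U^\star$). Third, for each marked leaf we get a \emph{yellow piece}: the fringe subtree rooted at the relevant child $w$ of a branching node, with $w$ fixed by $a$, hence a pair in $\mathcal U$ with a marked leaf whose decorations are forced — counted by $U^\bullet = zU'$. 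Fourth, for each internal edge of $t_0$ we get a \emph{blue} or \emph{pink piece}, a fringe subtree pruned at the next branching node below, with a blossom fixed by $a$, at odd resp.\ even distance; the parity correspondence (blue $\leftrightarrow$ same decoration at the endpoints of the edge, pink $\leftrightarrow$ different) is identical to the labeled case, giving factors $U^\odd$ resp.\ $U^\even$, for a total of $(U^\odd)^{n_=}(U^\even)^{n_{\ne}}$. Fifth, at each branching node $v$ (i.e.\ each internal node of $t_0$) we collect a \emph{green piece}: the forest of all fringe subtrees rooted at children of $v$ that carry no marked leaf. Here is where $2U+1-z$ appears: $v$ itself is fixed by $a$, but $a$ may permute these "inert" children among themselves arbitrarily, so a green piece is a pair (forest, automorphism) with the decorations of the whole forest determined by the decoration of $v$; decomposing exactly as in the derivation of \cref{eq:U2}, a green piece is $z + \exp_{\ge 2}(U) + D\exp(U)$-many objects when $v$ is a leaf of the skeleton... — more precisely, the contribution of the sub-forest hanging at a branching node is the ``sibling-set'' part of \cref{eq:U2}, and running this for both possible decorations of $v$ (the two canonical cotree shapes) accounts for the factor $2$, yielding $2U+1-z$ after including the single-leaf term. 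Sixth and last, because $t_0$ is labeled its nodes/edges/leaves are canonically ordered, so the colored pieces are canonically ordered, the map $(t,a;\ell_1,\dots,\ell_k)\mapsto(\text{tuple of pieces-with-automorphisms})$ is a size-preserving bijection, and the exponential generating functions multiply, giving the stated identity.

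The main obstacle I expect is the precise bookkeeping of the \emph{green} pieces and the factor $2U+1-z$: unlike in the labeled case, where a green piece was simply an unordered set of trees (series $\exp(L)$), here one must argue carefully that at a branching node $v$ the automorphism acts on the whole ``inert'' forest — including possibly cyclically permuting several inert subtrees — and that the combinatorial decomposition of \cref{eq:U2}–\cref{eq:D} (sets of cycles, the class $\mathcal D$, the $r$-to-$1$ correspondence $\mathcal C_r \to \mathcal U\circ\mathcal Z^r$) is exactly the right tool to enumerate such pieces-with-automorphism, once one also sums over the two admissible decorations of $v$. A secondary subtlety is verifying that the second bullet of \cref{Def:V_t_0} really does pin down the skeleton pointwise and that the pieces attached at distinct skeleton nodes are acted on independently by $a$ (so that $a$ is recovered from its restrictions to the pieces, by the orbit–stabilizer/automorphism-determined-by-leaves argument already used for $\mathcal U$). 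Apart from these points the argument is a faithful ``decoration-rigidity + colored-piece'' copy of the proof of \cref{th:SerieT_t0}, and I would present it that way, emphasizing only the places where the automorphism changes the local count.
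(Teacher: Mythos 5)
Your overall architecture is the paper's: cut $t$ along the skeleton corresponding to $t_0$, observe that the second bullet of \cref{Def:V_t_0} forces every colored piece to be stable under $a$ (so $a$ splits into one automorphism per piece and the series factorizes), and count gray, yellow, blue, pink pieces by $U^\star$, $U^\bullet$, $U^\odd$, $U^\even$ respectively, with decorations rigid as in \cref{th:SerieT_t0}. All of that matches the paper's proof.

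However, your treatment of the one genuinely new ingredient — the green pieces and the factor $2U+1-z$ — is wrong as stated. You attribute the factor $2$ to ``running this for both possible decorations of $v$'', but the decoration of a branching node $v$ is not a free choice: it must equal the decoration of the corresponding internal node of $t_0$ (that is exactly how the induced cotree is defined), just as in the labeled case where the green factor was $e^L$ and not $2e^L$. The correct count is purely structural: a green piece is a possibly empty forest of undecorated trees of $\mathcal L$ together with an automorphism that may permute its components, and one splits into three cases — empty forest (term $1$), exactly one component, whose automorphism is a root-preserving automorphism of that tree (term $U$), and at least two components, which after attaching a common root become an element of $\mathcal U$ of size at least $2$ (term $U-z$; note that with a single component one cannot add a root, since that would create a unary node). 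This gives $1+U+(U-z)=2U+1-z$ per internal node of $t_0$. Your alternative description of the green count as ``$z+\exp_{\geq 2}(U)+D\exp(U)$-many objects'' is also off the mark: by \cref{eq:U2} that expression is just $U$ itself, the series of single trees-with-automorphism, not of forests attached at a branching node. So the decomposition you propose is the right one, but the key new local count needs to be replaced by the case analysis above; as written, your argument would both overcount (spurious factor $2$ from decorations) and miscount the forest structure at branching nodes.
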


\begin{proof}
Let $(t,a;\ell_1,...,\ell_k)$ be a tree in $\mathcal V_{t_0}$. 
The tree $t$ with its marked leaves $\ell_1,...,\ell_k$
can be decomposed in a unique way as in the proof of \cref{th:SerieT_t0}
 into pieces: pink trees, blue trees, yellow trees, gray trees and green forests.

As soon as a node of $t$ is fixed by the automorphism $a$, then the set of its descendants is stable by $a$.  
Therefore, the second item of \cref{Def:V_t_0} ensures that each colored piece
in the decomposition of $t$ is stable by $a$, 
so that $a$ can be decomposed uniquely into a collection of automorphisms, 
one for each colored piece.
Consequently, from now on, we think at pieces as trees/forests \emph{with an automorphism}.

As in \cref{th:SerieT_t0}, each piece can be chosen independently in a set
depending on its color. Moreover, since $t_0$ is labeled, the pieces can be ordered in a canonical way,
so that the generating series of $V_{t_0}$ is the product of the generating series of the pieces.
\begin{itemize}
\item The gray subtree is a tree with an automorphism and a blossom which is fixed by the automorphism
(because of the second item of \cref{Def:V_t_0}).
As in \cref{th:SerieT_t0}, the decoration is forced by the context, so
that we can consider the gray subtree as not decorated.
The possible choices for the gray subtrees are therefore counted by $U^\star$. 
\item The possible choices for each green forest (and its automorphism)
 are counted by $1+U+(U-z)$: the first term corresponds to the empty green piece, 
the second one to exactly one tree in the green forest, and the third one to a set 
of at least two green trees (which can be seen as a non-trivial tree in $\mathcal U$ by adding a root).
\item The possible choices for each yellow piece are counted by $U^\bullet$,
since these trees have a marked leaf which is not necessarily fixed by the automorphism. 
\item The possible choices for each pink piece are counted by  $U^{\text{even}}$:
the blossom must be at even distance from the root of the piece 
(for the same reason as in  \cref{th:SerieT_t0}) and must be fixed by the automorphism
(because of the second item of \cref{Def:V_t_0}).
\item Similarly, the possible choices for each blue piece are counted by  $U^{\text{odd}}$.
\end{itemize}
Bringing everything together gives the formula in the theorem.
\end{proof}

\subsection{Asymptotic analysis}
Let $\rho$ be the radius of convergence of $U$.
It is easily seen that we have $0<\rho<1$, see, {\em e.g.}, \cite{GenitriniPolya}, where
the numerical approximation $\rho \approx 0.2808$ is given.
\begin{proposition}\label{prop:Asympt_U}
	The series $U,U',U^\star,U^\even, U^\odd$ all have the same radius of convergence $\rho$,
	are $\Delta$-analytic and admit the following expansions around $\rho$:
	\begin{equation*}
	U(z)\underset{z\to \rho}{=}\frac{1+\rho}{2}-\beta\sqrt{\rho-z} +o(\sqrt{\rho - z}), \qquad 
	U'(z)\underset{z\to \rho}{\sim}\frac {\beta}{2\sqrt{\rho-z}},
        \end{equation*}
        \begin{equation*}
	2 U^\even(z) \sim 2U^\odd(z) \sim U^\star(z)\underset{z\to \rho}{\sim}\frac {1}{2\beta\sqrt{\rho-z}}, 
	\end{equation*}
	for some constant $\beta>0$.
\end{proposition}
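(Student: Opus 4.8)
The plan is to obtain the singular expansion of $U$ first, by applying the standard machinery for Pólya structures (as in \cite[Section VII.5]{Violet} and \cite{GenitriniPolya}), and then to deduce the expansions for the other four series by transfer of singularities through the functional equations proved above. First I would use the decomposition \eqref{eq:U2}, namely $U = z + \exp_{\geq 2}(U) + D\exp(U)$, together with \eqref{eq:D}. The key point is that $D(z) = \exp\bigl(\sum_{r\geq 2}\tfrac1r U(z^r)\bigr)-1$ is analytic on a disc of radius strictly larger than $\rho$: indeed $U$ has radius of convergence $\rho<1$, so $U(z^2), U(z^3), \dots$ are all analytic for $|z| < \sqrt\rho$, and $\sqrt\rho > \rho$. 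Hence near $z=\rho$ the series $D$ (and $D'$) behaves like an analytic function, and $U$ satisfies a smooth implicit equation $U = F(z, U)$ with $F(z,u) = z + \exp_{\geq 2}(u) + D(z)\exp(u)$ analytic in both variables in a neighbourhood of $(\rho, U(\rho))$. One checks this is a \emph{smooth implicit-function schema} with a square-root singularity: writing $\tau = U(\rho)$, the characteristic system $\tau = F(\rho,\tau)$, $1 = \partial_u F(\rho,\tau)$ has a solution with $\partial_z F \neq 0$ and $\partial_{uu}F \neq 0$, which by \cite[Theorem VII.3]{Violet} yields
\[
U(z) = \tau - \beta\sqrt{\rho - z} + o(\sqrt{\rho-z})
\]
for some $\beta>0$, and $\Delta$-analyticity of $U$. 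The value $\tau = \tfrac{1+\rho}{2}$ comes from the identity $V(z)=2U(z)-z$ (so $U = \tfrac{z+V}2$) combined with the fact that, as for every Pólya tree enumeration of this type, $V'$ (hence $U'$) blows up at $\rho$ while $\partial_u F(\rho,\tau)=1$; more concretely, differentiating \eqref{eq:U} and isolating the singular part forces $\exp\bigl(\sum_{r\ge1}\tfrac1r U(\rho^r)\bigr) = 1$ is \emph{not} what happens — rather one uses that $1 = \partial_u F(\rho,\tau) = \exp_{\ge1}(\tau) + D(\rho)\exp(\tau) = e^\tau - 1 + D(\rho)e^\tau$, and combined with the value of $U$ at $\rho$ from \eqref{eq:U2} this pins down $\tau = \tfrac{1+\rho}{2}$. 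Singular differentiation \cite[Theorem VI.8]{Violet} then gives $U'(z) \sim \tfrac{\beta}{2\sqrt{\rho-z}}$.

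Next I would treat $U^\star, U^\even, U^\odd$. From \eqref{eq:Ustar}, $U^\star = 1 + U^\star\bigl(\exp_{\geq 1}(U) + D\exp(U)\bigr)$, so
\[
U^\star = \frac{1}{1 - \exp_{\geq 1}(U) - D\exp(U)}.
\]
The denominator is $1 - \bigl(e^U - 1 + D e^U\bigr)$, which at $z=\rho$ equals $1 - \partial_u F(\rho,\tau) = 0$ by the characteristic equation above. Expanding the denominator near $\rho$ using $U(z) = \tau - \beta\sqrt{\rho-z}+o(\sqrt{\rho-z})$ and the analyticity of $D$, the linear term in $\sqrt{\rho-z}$ has coefficient $-\beta\,\partial_u(e^u - 1 + D(\rho)e^u)\big|_{u=\tau} = -\beta\,\partial_{uu}F(\rho,\tau)$; a short computation (the same $\partial_{uu}F$ that entered the square-root schema, which is positive) shows the denominator is $\sim c\,\beta\sqrt{\rho-z}$ with $c = \partial_{uu}F(\rho,\tau)$, and matching this with the normalisation coming from the $U$-expansion gives $U^\star(z) \sim \tfrac{1}{2\beta\sqrt{\rho-z}}$ (the constant $2\beta$ is exactly what the schema produces — $\partial_{uu}F(\rho,\tau)\cdot\beta^2 = \tfrac12\cdot$ something, to be made precise). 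For $U^\even$ and $U^\odd$: solving the linear system \eqref{eq:Uevenodd} with $\gamma := \exp_{\geq 1}(U) + D\exp(U)$ gives $U^\even = \tfrac{1}{1-\gamma^2}$ and $U^\odd = \tfrac{\gamma}{1-\gamma^2}$. Since $\gamma \to 1$ at $\rho$, we have $1-\gamma^2 = (1-\gamma)(1+\gamma) \sim 2(1-\gamma)$, hence $U^\even \sim U^\odd \sim \tfrac{1}{2(1-\gamma)} = \tfrac12 U^\star$, which gives $2U^\even(z) \sim 2U^\odd(z) \sim U^\star(z) \sim \tfrac{1}{2\beta\sqrt{\rho-z}}$ as claimed. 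That all these series are $\Delta$-analytic follows from the $\Delta$-analyticity of $U$ (hence of $\gamma$, $U^\star$, etc., being analytic functions of $\Delta$-analytic series that stay in the domain of analyticity, with the only singularity at $\rho$).

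The main obstacle I anticipate is not the transfer step but the careful justification that the implicit-function schema for $U$ is genuinely \emph{smooth} (in the technical sense of \cite[Definition VII.4]{Violet}) and \emph{aperiodic}, and the verification of the two analytic inequalities $\partial_z F(\rho,\tau)\neq 0$ and $\partial_{uu}F(\rho,\tau)\neq 0$ that produce the square-root — this requires knowing $\rho<1$ (so that $D$ is analytic past $\rho$, which is the whole reason the schema is smooth rather than of Pólya type at $\rho$) and a little numerics to exclude degeneracies; this is exactly the content of \cite{GenitriniPolya}, which I would cite. Propagating the constant $\beta$ correctly through the three auxiliary equations so that the stated asymptotic constants ($\tfrac\beta2$, $\tfrac1{2\beta}$) come out consistent is bookkeeping, but it is where sign and factor errors are easy to make, so I would do it by always expressing everything in terms of $\partial_u F$ and $\partial_{uu}F$ evaluated at $(\rho,\tau)$ and only at the end substitute $\beta = \sqrt{2\,\partial_z F / \partial_{uu}F}$ (up to the precise normalisation of the schema).
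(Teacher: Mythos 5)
Your proposal is correct and follows essentially the same route as the paper: the smooth implicit-function schema applied to $U=F(z,U)$ with $F(z,u)=z+\exp_{\geq 2}(u)+D(z)\exp(u)$ and $D$ analytic past $\rho$ (radius $\sqrt{\rho}$), the evaluation of $\partial_u F$ at the critical point (equivalently the paper's identity $\partial_u F(z,U(z))=2U(z)-z$) to pin down $\tau=\tfrac{1+\rho}{2}$ and $\partial^2_{u}F(\rho,\tau)=2$, and the explicit resolutions $U^\star=\bigl(1-\partial_uF(z,U(z))\bigr)^{-1}$, $U^{\even}=\bigl(1-(\partial_uF(z,U(z)))^2\bigr)^{-1}$, $U^{\odd}=\partial_uF(z,U(z))\,U^{\even}$. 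The one step the paper treats that your plan glosses over as automatic is the $\Delta$-analyticity of $U^\star,U^{\even},U^{\odd}$: it does not follow merely from composing $\Delta$-analytic series, since one must rule out zeros of the denominators, i.e. show $\partial_uF(z,U(z))\neq\pm 1$ for $|z|\le\rho$, $z\neq\rho$, which the paper obtains from the daffodil lemma.
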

To prove the proposition, we need the following lemma,
which is standard in the analysis of P\'olya structures.
\begin{lemma}\label{lem:Rayon_U} 
The radius of convergence of $D$ is $\sqrt{\rho}>\rho$.
\end{lemma}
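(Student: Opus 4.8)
The goal is to show that $D(z) = \exp\left(\sum_{r\geq 2}\frac{1}{r}U(z^r)\right) - 1$ has radius of convergence exactly $\sqrt\rho$, and then to observe $\sqrt\rho > \rho$ (which follows at once from $0 < \rho < 1$). The key structural fact is that the defining sum $\sum_{r\geq 2}\frac{1}{r}U(z^r)$ is dominated by its first term $\frac{1}{2}U(z^2)$: the term $\frac12 U(z^2)$ has a singularity at $z = \sqrt\rho$, while for $r\geq 3$ the term $\frac{1}{r}U(z^r)$ is analytic in a disc of radius $\rho^{1/r}$, and $\rho^{1/3} > \rho^{1/2}$ since $\rho < 1$. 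So I would first argue that the tail $\sum_{r\geq 3}\frac{1}{r}U(z^r)$ converges to an analytic function on the open disc of radius $\rho^{1/3}$, hence in particular on a disc strictly larger than $\sqrt\rho$.

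\textbf{Convergence of the tail.} Since $U$ has nonnegative coefficients and radius $\rho$, for $|z| < \rho^{1/3}$ we have $|z^r| \le |z|^3 \cdot |z|^{r-3} \le \rho \cdot |z|^{r-3}$ for $r \ge 3$ once $|z|\le 1$ (and the interesting régime is $|z|$ near $\rho^{1/2}<1$), so the terms are bounded by $U(|z|^r)$ which decays geometrically; more carefully, one compares $\sum_{r\ge 3}\frac1r U(z^r)$ with the convergent series $\sum_{r\ge 3} U(\rho^{r/3})$ — or simply notes that each summand is analytic on $\{|z|<\rho^{1/r}\}$ and the partial sums converge uniformly on compacts of $\{|z|<\rho^{1/3}\}$ by a Weierstrass $M$-test, using that $U(x) \le U(\rho^{-\epsilon})$ is bounded for $x$ bounded away from $\rho$. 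Thus $\sum_{r\ge 3}\frac1r U(z^r)$ extends analytically past $\sqrt\rho$.

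\textbf{Locating the singularity.} Consequently $D(z) = \exp\big(\tfrac12 U(z^2) + \Psi(z)\big) - 1$ where $\Psi$ is analytic on a disc containing $\{|z|\le\sqrt\rho\}$. The composition $z \mapsto \tfrac12 U(z^2)$ has radius of convergence $\sqrt\rho$ (it is analytic for $|z^2|<\rho$ and singular at $z=\sqrt\rho$ since $U$ is singular at $\rho$, being $\Delta$-analytic with a square-root singularity there by the structure of Pólya tree equations — this is the content of \cref{prop:Asympt_U}, which can legitimately be invoked, or established directly via \cref{eq:U}). Since $\exp(\cdot) - 1$ is entire and $\Psi$ contributes no singularity near $\sqrt\rho$, the function $D$ is analytic on $\{|z|<\sqrt\rho\}$ and has a genuine singularity at $z = \sqrt\rho$ (the exponential of a function with a square-root singularity still has one, as $\exp$ is locally invertible and non-vanishing). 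Therefore the radius of convergence of $D$ is exactly $\sqrt\rho$. Finally, $\rho < 1$ gives $\sqrt\rho > \rho$, completing the proof.

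\textbf{Main obstacle.} The only delicate point is justifying rigorously that $z\mapsto \tfrac12 U(z^2)$ genuinely has a \emph{singularity} at $\sqrt\rho$ (not merely radius of convergence $\sqrt\rho$ with possible analytic continuation) and that exponentiating it preserves this. For the first part, one uses that $U$ is aperiodic with nonnegative coefficients and — by Pringsheim's theorem together with the known square-root nature of its dominant singularity (from the smooth implicit-function schema applied to \cref{eq:U}) — is singular precisely at $\rho$; the substitution $z\mapsto z^2$ then transports this to a singularity at $\sqrt\rho$. For the second part, since $\exp$ is entire and nowhere zero, $D+1 = \exp(g)$ is singular at a point iff $g$ is, so no cancellation can occur. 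Everything else is routine domination of power series with nonnegative coefficients.
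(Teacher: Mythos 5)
Your argument reaches the correct conclusion, but by a heavier route than the paper, and one step as written risks circularity. The paper's proof is elementary and uses no singularity analysis at all: for the lower bound it works on the positive axis, using $U(z^x)\le U(z)\,z^{x-1}$ (valid because $U$ has nonnegative coefficients and no constant term) to get $D(\sqrt t)\le \exp\big(U(t)\tfrac 1{1-\sqrt t}\big)<\infty$ for $t<\rho$; for the upper bound it simply observes that $D$ termwise dominates $\tfrac12 U(z^2)$, whose radius of convergence is $\sqrt\rho$, so no singularity needs to be located. Your route --- splitting off the tail $\sum_{r\ge 3}\tfrac1r U(z^r)$, analytic on $|z|<\rho^{1/3}>\sqrt\rho$, and arguing that $\exp(\cdot)-1$ of a function singular at $\sqrt\rho$ stays singular there --- is workable, but two points need attention. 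First, you cannot ``legitimately invoke'' \cref{prop:Asympt_U}: in the paper this lemma is an ingredient of the proof of that very proposition (it provides the analyticity of $D$, hence of $F(z,u)$, beyond $\rho$), so using the square-root expansion of $U$ here would be circular. Your fallback is nonetheless sufficient and simpler than you make it: Pringsheim's theorem alone gives that $\rho$ is a singularity of $U$ (nonnegative coefficients), hence $\sqrt\rho$ is one of $U(z^2)$; and since $D+1=\exp(g)$ with $g$ having nonnegative coefficients, any analytic continuation of $D+1$ at $\sqrt\rho$ would take a value $\ge 1\neq 0$ there, so a local logarithm would continue $g$ as well --- this nonvanishing at the point is the precise content behind your ``no cancellation'' remark, and neither the square-root nature nor uniqueness of the dominant singularity is needed. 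Second, in your Weierstrass bound for the tail, boundedness of $U$ on compacts (your ``$U(\rho^{-\epsilon})$'', presumably meant as $U(\rho-\epsilon)$) is not enough because of the $1/r$ factors ($\sum 1/r$ diverges); you need the geometric decay $U(c^r)\le c^{r-3}U(c^3)$ for $c<\rho^{1/3}<1$, which you do gesture at with the comparison series. With these repairs your proof is correct; it is simply longer than the paper's two-line coefficient-comparison argument.
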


\begin{proof}
Since $U$ has no constant term, for every $x \geq 1$ and $0<z<1$ we have $U(z^x)\leq U(z)z^{x-1}$. 
Hence for $0<t<\rho$,
\[D(\sqrt{t}) = \exp_{\geq 1}\left( \sum_{r\geq 2} \frac 1 r\ U(t^{r/2}) \right) \leq \exp\left( \sum_{r\geq 2}  U(t)t^{r/2-1} \right)\leq \exp\left(U(t) \frac 1{1-\sqrt{t}}\right) <\infty.\]
This implies that the radius of convergence of $D$ is at least $\sqrt{\rho}$.
 Looking at \cref{eq:D}, we see that $D$ termwise dominates
 $\tfrac 12 U(z^2)$, whose radius of convergence is $\sqrt{\rho}$.
 Therefore, the radius of convergence of $D$ is exactly $\sqrt{\rho}$.
\end{proof}
\begin{proof}[Proof of \cref{prop:Asympt_U}]
	Set $F(z,u) = z+ \exp_{\geq 2}(u) + D(z)\exp(u)$. Then $U$ verifies the equation $U = F(z,U)$, which is the setting of \cite[Theorem A.6]{Nous3}\footnote{We warn the reader that the function $U$ appearing in \cite[Theorem A.6]{Nous3} is unrelated to the quantity $U(z)$ in the present article (which corresponds instead to $Y(z)$ in \cite[Theorem A.6]{Nous3}).} (in the one dimensional case, which is then just a convenient rewriting of \cite[Theorem VII.3]{Violet}).
	The only non-trivial hypothesis to check is the analyticity of $F$ at $(\rho,U(\rho))$. This holds because $\exp$ has infinite radius of convergence, while $D$ has radius of convergence $\sqrt{\rho}>\rho$ from \cref{lem:Rayon_U}.
	
	From items vi) and vii) of \cite[Theorem A.6]{Nous3}, we have that $U$ and $(1-\partial_uF(z,U(z)))^{-1}$
	have radius of convergence $\rho$, are $\Delta$-analytic and that $\partial_uF(\rho,U(\rho))=1$.
	Moreover, 
	\begin{equation*}
	U(z)\underset{z\to \rho}{=}U(\rho)-\frac {\beta}{\zeta}\sqrt{\rho-z} +o(\sqrt{\rho - z}), \qquad
	U'(z)\underset{z\to \rho}{\sim}\frac {\beta}{2\zeta\sqrt{\rho-z}}
        \end{equation*}
        \begin{equation*}
	(1-\partial_uF(z,U(z)))^{-1} \underset{z\to \rho}{\sim}\frac {1}{2\beta\zeta\sqrt{\rho-z}}, 
	\end{equation*}
	where $\beta = \sqrt{\partial_zF(\rho,U(\rho))}$ and $\zeta = \sqrt{\tfrac 12 \partial^2_uF(\rho,U(\rho))}$.

	We have $\partial_uF(z,u) = \exp_{\geq 1}(u) + D(z)\exp(u) = F(z,u) + u - z$.
 	Hence $\partial_uF(z,U(z)) = 2U(z) - z$. Recalling that $\partial_uF(\rho,U(\rho))=1$, 
	 we get $U(\rho) = \frac{1+\rho}{2}$.
	In addition, $\partial^2_uF(z,u) = \exp(u) + D(z)\exp(u) = \partial_uF(z,u) + 1$.
	Therefore, $\partial^2_uF(\rho,U(\rho)) = 2$ and $\zeta = 1$.
	The asymptotics of $U$ and $U'$ follow.

        Regarding $U^\star$ , \cref{eq:Ustar} implies that $U^\star = ({1-\partial_uF(z,U(z))})^{-1}$. Similarly solving the system of equations \eqref{eq:Uevenodd} we get
	$U^{\even} = (1 - (\partial_uF(z,U(z)))^2)^{-1}$ and $U^{\odd} =  \partial_uF(z,U(z))U^{\even}$ .
	By the daffodil lemma \cite[Lemma IV.1, p.266]{Violet}, we have $|\partial_uF(z,U(z))|<1$ for $|z| \le \rho$
	and $z \ne \rho$. In particular, $\partial_uF(z,U(z))$ avoids the value $1$ and $-1$ for such $z$.
  Therefore   $U^\star$,  $U^{\even}$ and  $U^{\odd}$ are $\Delta$-analytic.
 The asymptotics of $U^\star$ follows 
	from the above results.
	Finally, since $\partial_uF(\rho,U(\rho)) =1$, we have 
	$U^{\even}\sim U^\odd$ when  $z$ tends to  $\rho$. 
	And, since $U^\star = U^{\even}+ U^\odd$, their asymptotics follow.
	\end{proof}

\subsection{Distribution of induced subtrees of uniform cotrees}
We take a uniform unlabeled canonical cotree $\bm t^{(n)}$ with $n$ leaves, \emph{i.e.} a uniform element of size $n$
in $\overline{\mathcal V}$. 
We also choose uniformly at random a $k$-tuple of distinct leaves of $\bm t^{(n)}$.
We denote by $\mathbf{t}^{(n)}_k$ the labeled cotree induced by the $k$ marked leaves.

\begin{proposition}\label{prop:proba_arbre_unlabeled}
	Let $k\geq 2$, and let $t_0$ be a labeled binary cotree with $k$ leaves. Then
	\begin{equation}\label{eq:proba_asymptotique_t0_unlabeled}
	\proba (\mathbf{t}^{(n)}_k = t_0) \underset{n\to+\infty}{\to} 
	\displaystyle{\frac{(k-1)!}{(2k-2)!}}. 
	\end{equation}
\end{proposition}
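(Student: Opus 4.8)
The plan is to follow the proof of \cref{prop:proba_arbre} as closely as possible, with $\mathcal V$ and $\mathcal V_{t_0}$ playing the roles of (the classes counted by) $M$ and $M_{t_0}$, but with one extra ingredient at the end: since the automorphism-fixing condition in \cref{Def:V_t_0} means that $\mathcal V_{t_0}$ does \emph{not} capture all marked cotrees inducing $t_0$, the coefficient quotient will only yield a \emph{lower} bound for $\proba(\mathbf{t}^{(n)}_k=t_0)$, which I will then upgrade to the claimed limit by a global mass argument. Throughout, I write $\mathcal X_n$ for the set of size-$n$ elements of a class $\mathcal X$, and $(n)_k = n(n-1)\cdots(n-k+1)$.

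First I would set up the reduction. Using the $n!$-to-$1$ map $\mathcal V\to\overline{\mathcal V}$, a uniform element of $\overline{\mathcal V}_n$ is distributed as the image of a uniform $(t,a)\in\mathcal V_n$; this image has the same leaves and the same decorated tree structure as $t$, and the induced cotree (\cref{dfn:induced_cotree}) depends only on that structure and on the \emph{order} of the marked leaves, not on $a$ nor on the labels of $t$. Hence
\[
\proba(\mathbf{t}^{(n)}_k = t_0)= \frac{\#\big\{(t,a;\ell_1,\dots,\ell_k)\;:\;(t,a)\in\mathcal V_n,\ \ell_1,\dots,\ell_k\text{ distinct leaves of }t,\ t_{(\ell_1,\dots,\ell_k)}=t_0\big\}}{(n)_k\cdot|\mathcal V_n|}.
\]
Every element of $(\mathcal V_{t_0})_n$ contributes to the numerator, since by the first item of \cref{Def:V_t_0} its marked leaves induce $t_0$. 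Using that $V$ (resp.\ $V_{t_0}$) is the exponential generating function of $\mathcal V$ (resp.\ $\mathcal V_{t_0}$), so that $|\mathcal V_n|=n![z^n]V(z)$ and $|(\mathcal V_{t_0})_n|=n![z^n]V_{t_0}(z)$, I obtain the lower bound
\[
\proba(\mathbf{t}^{(n)}_k = t_0)\ \ge\ \frac{[z^n]V_{t_0}(z)}{(n)_k\,[z^n]V(z)}.
\]

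Next I would evaluate the limit of this lower bound exactly as in the proof of \cref{prop:proba_arbre}. Since $t_0$ is binary, $n_v=k-1$ and $n_=+n_{\neq}=k-2$, so \cref{th:V_t_0} gives $V_{t_0}=U^\star(2U+1-z)^{k-1}(U^\bullet)^{k}(U^\odd)^{n_=}(U^\even)^{n_{\neq}}$. By \cref{prop:Asympt_U}, as $z\to\rho$ we have $2U(z)+1-z\to 2$, $U^\bullet(z)=zU'(z)\sim\tfrac{\beta\rho}{2}(\rho-z)^{-1/2}$, $U^\star(z)\sim\tfrac1{2\beta}(\rho-z)^{-1/2}$ and $U^\even(z)\sim U^\odd(z)\sim\tfrac1{4\beta}(\rho-z)^{-1/2}$; multiplying these estimates (with the factor $2^{k-1}$ coming from the middle term) yields $V_{t_0}(z)\sim\tfrac{\beta\rho^{1/2}}{2^{2k-2}}(1-z/\rho)^{-(k-1/2)}$, while $V=2U-z$ has singular part $-2\beta\sqrt{\rho-z}$. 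Applying the transfer theorem \cite[Cor.~VI.1]{Violet} to both series (legitimate because all series involved are $\Delta$-analytic by \cref{prop:Asympt_U}), using $(n)_k\sim n^k$ and $\Gamma(k-\tfrac12)=\tfrac{(2k-2)!\sqrt\pi}{4^{k-1}(k-1)!}$, the factors $\rho^{\pm n}$, $\rho^{1/2}$ and $\beta$ all cancel and one finds $\dfrac{[z^n]V_{t_0}(z)}{(n)_k\,[z^n]V(z)}\to\dfrac{(k-1)!}{(2k-2)!}$. Thus $\liminf_n\proba(\mathbf{t}^{(n)}_k=t_0)\ge\tfrac{(k-1)!}{(2k-2)!}$ for every labeled binary cotree $t_0$ with $k$ leaves.

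Finally, I would close the gap with a counting argument. For each $n\ge k$, $\mathbf{t}^{(n)}_k$ takes values among the finitely many labeled cotrees with $k$ leaves, so $\sum_{t_0}\proba(\mathbf{t}^{(n)}_k=t_0)=1$. By the computation recalled in the proof of \cref{lem:factorisation}, there are exactly $\tfrac{(2k-2)!}{(k-1)!}$ labeled binary cotrees with $k$ leaves, so the lower bounds just obtained, summed over the binary $t_0$, already add up to $1$; hence $\sum_{t_0\text{ binary}}\proba(\mathbf{t}^{(n)}_k=t_0)\to1$. Since this is a finite sum of terms each with $\liminf\ge\tfrac{(k-1)!}{(2k-2)!}$ and with exactly $\tfrac{(2k-2)!}{(k-1)!}$ terms, each term must converge to $\tfrac{(k-1)!}{(2k-2)!}$, which proves the proposition (and, as a byproduct, $\proba(\mathbf{t}^{(n)}_k=t_0)\to0$ for non-binary $t_0$, which is the ``asymptotically almost all tuples lie in some $\mathcal V_{t_0}$'' statement announced after \cref{Def:V_t_0}). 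The main obstacle is precisely this last step: $\mathcal V_{t_0}$ undercounts because marked cotrees whose automorphism moves a first-common-ancestor node (or one of its relevant children) are discarded; rather than estimating that defect directly, which would require controlling automorphism-rich cotrees, I exploit that the defect must vanish asymptotically because the ``good'' configurations already saturate the total probability $1$.
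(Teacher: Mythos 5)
Your proposal is correct and follows essentially the same route as the paper: the same reduction to $\mathcal V$ via the $n!$-to-$1$ forgetting map, the same lower bound $\proba(\mathbf{t}^{(n)}_k=t_0)\ge [z^n]V_{t_0}(z)/\big((n)_k[z^n]V(z)\big)$ coming from the undercounting in \cref{Def:V_t_0}, the same singularity analysis via \cref{th:V_t_0,prop:Asympt_U}, and the same concluding saturation argument (the paper phrases it as summing the bounds over binary $t_0$ to get total mass $1$). Your last paragraph merely makes explicit the step the paper leaves to the reader.
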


\begin{proof}
	We take a uniform random pair $(\bm T^{(n)},\bm a)$ of $\mathcal V$ of size $n$
	with a $k$-tuple of distinct leaves of $\bm T^{(n)}$, also chosen uniformly.
	We denote by $\mathbf{T}^{(n)}_k$ the cotree induced by the $k$ marked leaves.
	Since the forgetting map from $\mathcal V$ to $\overline{\mathcal V}$ is $n!$-to-$1$,
	$\bm T^{(n)}_k$ is distributed as $\bm t^{(n)}_k$.
    Hence, similarly to \cref{eq:probat0_quotient}, we have
	\[ \proba(\mathbf{t}^{(n)}_k=t_0) 
	= \proba(\mathbf{T}^{(n)}_k=t_0) \geq \frac{n![z^{n}] V_{t_0}(z)}{n \dots (n-k+1) \, n! [z^n] V(z)}.\]
	The inequality comes from the fact that $\mathcal{V}_{t_0}$ does not consist
	of all pairs in $\mathcal V$ with a $k$-tuple of marked leaves inducing $t_0$,
	but only of some of them (see the additional constraint in the second item of \cref{Def:V_t_0}).

	From \cref{th:V_t_0}, we have
	$$
	V_{t_0}=(U^\star)(2U+1-z)^{n_v} (U^\bullet)^k (U^\odd)^{n_=} (U^\even)^{n_{\neq}}.
	$$
	Recalling that $U^\bullet(z) = zU'(z)$, we use the asymptotics for $U,U',U^\star,U^\even, U^\odd$ (given in \cref{prop:Asympt_U}) 
	and furthermore the equalities $n_v=k-1$ and $n_= +n_{\neq}=k-2$ (which hold since $t_0$ is binary) to obtain
	\begin{align*}
	V_{t_0}(z)
	&\underset{z\to \rho}{\sim}  \frac {1}{2\beta} 2^{k-1} \left(\frac{\beta}{2} \cdot \rho\right)^k \left(\frac {1}{4\beta}\right)^{k-2} (\rho-z)^{-(k-1/2)}\\
	&\underset{z\to \rho}{\sim}  \frac {\beta \rho^k}{2^{2k-2}} (\rho-z)^{-(k-1/2)} = \frac {\beta\sqrt{\rho}}{2^{2k-2}} (1-\tfrac z \rho)^{-(k-1/2)}.
	\end{align*}
	By the transfer theorem (\cite[Corollary VI.1 p.392]{Violet}) we have 
$$
[z^{n}] V_{t_0}(z) \underset{n\to +\infty}{\sim} \frac{\beta\sqrt{\rho}}{2^{2k-2} \rho^{n}} \frac{n^{k-3/2}}{\Gamma(k-1/2)}
=  \beta  \frac{(k-1)!}{\sqrt{\pi}(2k-2)!} \frac{n^{k-3/2}}{\rho^{n-1/2}}
$$
	Besides, using $V(z)=2U(z)-z$, \cref{prop:Asympt_U}, and the transfer theorem as above, we have 
	$$
	n(n-1)\dots (n-k+1) [z^n] V(z) \underset{n\to +\infty}{\sim}
	n^k (-2\beta\sqrt{\rho})\frac{n^{-3/2}}{\rho^n \Gamma(-1/2)}
	\sim {\beta} \frac{n^{k-3/2}}{\rho^{n-1/2} \sqrt{\pi}}.
	$$
	Finally,
	$
	\liminf_{n\to\infty}\proba(\mathbf{t}^{(n)}_k=t_0) \geq \frac{(k-1)!}{(2k-2)!}
	$.
	To conclude, recall (as seen in the proof of \cref{lem:factorisation}) 
	that summing the right-hand-side over all labeled binary cotrees $t_0$ of size $k$ gives $1$,
	from which the proposition follows.
\end{proof}

\subsection{Proof of \cref{th:MainTheorem,th:DegreeRandomVertex} in the unlabeled case}\label{SousSec:PreuveNonEtiquete}
The argument is identical to the labeled case.
Recall that $\bm t^{(n)}$ is a uniform unlabeled canonical cotree of size $n$, so that $\Cograph(\bm t^{(n)})$ is a uniform unlabeled cograph of size $n$, \emph{i.e.} has the same ditribution as $\bm G_n^u$.
 Thus \cref{th:MainTheorem} follows from \cref{lem:factorisation} and \cref{prop:proba_arbre_unlabeled}, and
 \cref{th:DegreeRandomVertex} is then a consequence of  \cref{th:MainTheorem,prop:degree,prop:IntensityW12}.

\section{Vertex connectivity}
\label{sec:DegreeConnectivity}
A connected graph $G$ is said to be $k$-connected if it does not contain a set of $k - 1$ vertices whose removal disconnects the graph. 
The \emph{vertex connectivity} $\kappa(G)$ is defined as the 
largest $k$ such that $G$ is $k$-connected.

Throughout this section, $\bm G_n$ (resp. $\bm G^u_n$) 
is a uniform random labeled (resp. unlabeled) cograph of size $n$,
{\em conditioned to be connected}. 
The aim of this section is to prove that 
the random variable $\kappa(\bm G_n)$ (resp. $\kappa(\bm G^u_n)$) 
converges in distribution to a non-trivial random variable (without renormalizing).
The limiting distributions in the labeled and unlabeled cases are different.  
\medskip

A cograph $G$ (of size at least $2$) is connected if and only if the root of its canonical cotree is decorated by $\One$. 
(This implies that in both cases a uniform cograph of size $n$ is connected with probability $1/2$ for every $n$.)
Therefore, any connected cograph $G$ (of size at least $2$) can be uniquely decomposed as the join of $F_1,\dots, F_k$ where each $F_i$ is either a disconnected cograph or a one-vertex graph. 
Moreover, the cographs $F_i$ are those whose canonical cotrees are the fringe subtrees attached to the root of the canonical cotree of $G$. 
Throughout this section, we refer to the $F_i$'s as the \emph{components} of $G$. 
The following lemma, illustrated by \cref{fig:vertex_connectivity}, gives a simple characterization of $\kappa(G)$ when $G$ is a cograph.
\begin{lemma}\label{lem:CalculerKappa}
Let $G$ be a connected cograph which is not a complete graph.
Let $F_1,\dots, F_k$ be the components of $G$. It holds that 
$$
\kappa(G)= %
|G| - \max_{1\leq i \leq k} \{|F_i|\}.
$$
\end{lemma}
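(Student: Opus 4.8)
The plan is to use the structure of $G$ as a join of its components $F_1,\dots,F_k$. First I would establish the upper bound $\kappa(G)\le |G|-\max_i|F_i|$. Let $i_0$ be an index attaining the maximum. Since $F_{i_0}$ is either a single vertex or a disconnected cograph, in the latter case it has at least two connected components; pick two vertices $u,v$ of $F_{i_0}$ lying in different connected components of $F_{i_0}$ (if $F_{i_0}$ is a single vertex, this part of the argument is vacuous and one argues directly that removing all other vertices isolates that vertex, using that $G$ is not complete so $k\ge 2$ and some $F_j$ is not a single vertex — actually the cleanest uniform statement is the following). Consider the set $S$ of all vertices of $G$ not in $F_{i_0}$, so $|S| = |G|-|F_{i_0}|$. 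In $G\setminus S$ the only edges remaining are those internal to $F_{i_0}$, so $G\setminus S$ is isomorphic to $F_{i_0}$; since $F_{i_0}$ is disconnected (or a single vertex, in which case $G\setminus S$ is a single vertex, hence ``disconnected'' only in the degenerate sense — here one instead removes $|G|-|F_{i_0}|-1$ vertices differently), $G\setminus S$ is disconnected, giving $\kappa(G)\le |S| = |G|-\max_i|F_i|$. I will need to handle the single-vertex case for $F_{i_0}$ separately, noting that if $\max_i|F_i|=1$ then $G$ is complete, contradicting the hypothesis, so in fact $\max_i|F_i|\ge 2$ and $F_{i_0}$ is genuinely disconnected.

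Next I would prove the matching lower bound $\kappa(G)\ge |G|-\max_i|F_i|$, i.e. that removing any set $T$ of fewer than $|G|-\max_i|F_i|$ vertices leaves $G$ connected. Suppose $|T| < |G|-\max_i|F_i|$. Then for every component $F_i$ we have $|F_i\setminus T|\ge |F_i| - |T| > |F_i| - (|G|-\max_j|F_j|)\ge |F_i|-(|G|-|F_i|)$, which is not immediately what I want; the right way is: since $|T|<|G|-|F_i|$ for every $i$ (as $|F_i|\le\max_j|F_j|$), the set $T$ cannot contain all vertices outside $F_i$, hence at least one component $F_{i}$ other considerations... Let me restate: because $|T| < |G| - \max_j |F_j| \le |G| - |F_i|$ for each $i$, there is, for each $i$, a vertex outside $F_i$ not in $T$; more usefully, there are at least two components $F_i, F_{i'}$ each retaining at least one vertex after removing $T$ — indeed the number of components entirely killed by $T$ is at most $|T|/\min|F_i|\le |T| < k$ only if all $|F_i|\ge 1$, so at least ... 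The clean argument: the vertices of $G$ not in $T$ are distributed among the components; since in a join every vertex of a surviving component $F_i$ is adjacent to every vertex of a surviving component $F_{i'}$ with $i\ne i'$, $G\setminus T$ is connected as soon as either (a) at least two distinct components survive, or (b) exactly one component $F_i$ survives and $F_i\setminus T$ is itself connected. Case (a): since $|T|<|G|-\max_j|F_j|\le |G|-|F_i|$ for every $i$, $T$ misses some vertex outside each $F_i$, so at least two components survive — this needs a short counting check which I would spell out. Case (b): if only $F_i$ survives, then $T\supseteq \bigcup_{j\ne i}F_j$, so $|T|\ge |G|-|F_i|\ge |G|-\max_j|F_j| > |T|$, a contradiction; hence case (b) does not occur.

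Putting the two bounds together yields the equality. The main obstacle, and the only place requiring genuine care, is the bookkeeping around degenerate cases — single-vertex components, and ensuring ``disconnected'' is used correctly when $F_{i_0}$ could a priori have size $1$; the hypothesis that $G$ is connected but not complete is exactly what rules these out (it forces $\max_i|F_i|\ge 2$ and $k\ge 2$), so I would open the proof by deriving these two facts from the hypothesis and the characterization of connected cographs recalled just before the lemma. Everything else is elementary graph theory on joins. An illustration via \cref{fig:vertex_connectivity} supports the intuition: the worst cut is to isolate the largest component from the rest.
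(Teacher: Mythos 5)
Your proposal is correct and follows essentially the same route as the paper: the upper bound by deleting all vertices outside a largest component (which is disconnected because $G$ is not complete), and the lower bound by observing that after removing fewer than $|G|-\max_i|F_i|$ vertices at least two components retain vertices, so the join edges keep the graph connected. Your preliminary remark that non-completeness forces $\max_i|F_i|\ge 2$ is exactly the paper's observation that the largest component is not a single vertex, so there is no substantive difference.
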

\begin{proof}
We reorder the components such that $|F_{1}|=\max_i|F_{i}|$. 
Because $G$ is not a complete graph, $F_{1}$ is not a one-vertex graph, and therefore is disconnected. 
Let us denote by $v_1,\dots,v_r$ the vertices of $F_{2}\cup F_{3} \cup \dots \cup F_{k}$. We have to prove that $\kappa(G)=r$.

\noindent{\bf Proof of $\kappa(G)\leq r$.} If we remove all vertices $v_1,\dots,v_r$ then we are left with $F_1$ which is disconnected.\\
\noindent{\bf Proof of $\kappa(G)\geq r$.} If we remove only $r-1$ vertices then there remains at least one $v_j$ among  $v_1,\dots,v_r$. 
Let us denote by $F_i$ the component of $v_j$.
There also remains at least a vertex $v \notin F_i$ (or $|F_i|$ would be larger than $|F_1|$). 
Consequently, $v$ and $v_j$ are connected by an edge, and every remaining vertex is connected to $v_j$ (when not in $F_i$) or to $v$ (when not in the component containing $v$), 
so that $G$ remains connected. 
Therefore we must remove at least $r$ points to disconnect $G$.
\end{proof}

\begin{figure}[htbp]
\begin{center}
\includegraphics[width=11cm]{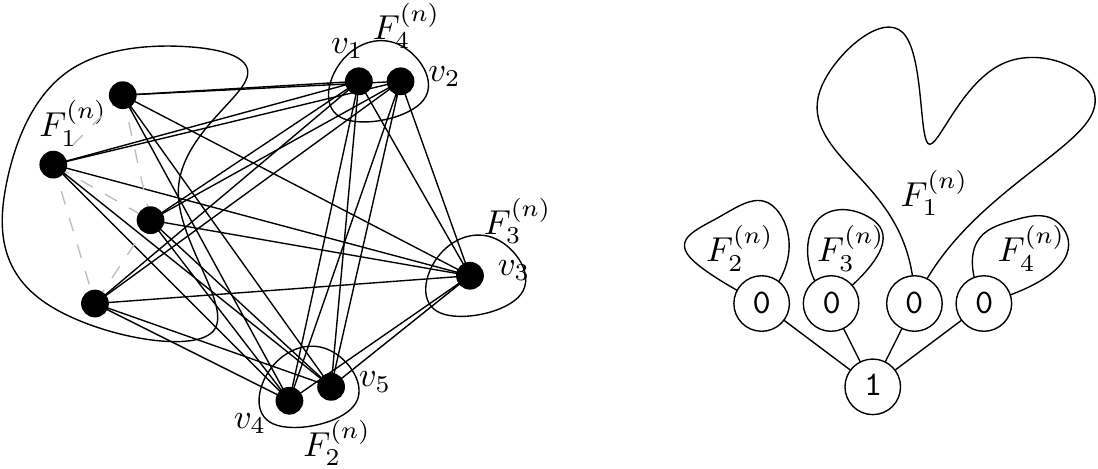}
\caption{A connected cograph and the corresponding cotree. The connectivity degree of this graph is $|F_{2}|+|F_{3}|+|F_{4}|=2+2+1=5$. \label{fig:vertex_connectivity}}
\end{center}
\end{figure}

\begin{theorem}\label{thm:DegreeConnectivity}
Let $M(z)$ (resp. $V(z)$) be the exponential (resp. ordinary) generating series of labeled (resp. unlabeled) cographs.
Their respective radii of convergence are $\rho=2\log(2)-1$ and $\rho_u \approx 0.2808$.
For $\formerell \geq 1$, set
$$
\pi_\formerell= \rho^\formerell [z^{\formerell}]M(z), \qquad \pi_\formerell^u = \rho_u^\formerell [z^{\formerell}] V(z).
$$
Then $(\pi_\formerell)_{\formerell\geq 1}$ and $(\pi_\formerell^u)_{\formerell\geq 1}$ are probability distributions 
and, for every fixed $\formerell \ge 1$,
\begin{equation}
\label{eq:limit_pi_l}
\mathbb{P}(\kappa(\bm G_n)=\formerell) \underset{n\to +\infty}{\to}\pi_\formerell,
\qquad \mathbb{P}(\kappa(\bm G^u_n)=\formerell) \underset{n\to +\infty}{\to}\pi^u_\formerell.
\end{equation}
\end{theorem}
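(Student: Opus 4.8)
The plan is to combine \cref{lem:CalculerKappa} with the singularity analyses of $L$ and $U$ already carried out (\cref{prop:Asympt_S_Seven,prop:Asympt_U}), by first extracting an \emph{exact} formula for $\proba(\kappa(\bm G_n)=\formerell)$ valid for $n$ large, and then letting $n\to\infty$.

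First I would fix $\formerell\geq 1$ and restrict to $n>2\formerell$. A connected cograph of size $n\geq 2$ corresponds to a canonical cotree with root decorated $\One$; since all decorations of a canonical cotree are forced by the root decoration and the alternation condition, such cotrees are in bijection with the trees of $\mathcal L$ having $n\geq 2$ leaves, so there are exactly $\ell_n=n!\,[z^n]L(z)$ connected labeled cographs of size $n$. Now \cref{lem:CalculerKappa} applies: the only connected cograph that is complete has all its root-fringe subtrees of size $1$, hence vertex connectivity $n-1\neq\formerell$ once $n>2\formerell$; so for $n>2\formerell$ a connected cograph $G$ of size $n$ satisfies $\kappa(G)=\formerell$ if and only if the largest fringe subtree at the root of its cotree has size $n-\formerell$, and since $n-\formerell>\formerell$, no two fringe subtrees can both have size $\geq n-\formerell$, so this largest subtree is \emph{unique}. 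I would then set up the size-preserving bijection between connected labeled cographs of size $n$ with $\kappa=\formerell$ and triples $(S,T^\ast,\mathcal F)$, where $S$ is an $(n-\formerell)$-subset of the label set, $T^\ast\in\mathcal L$ has size $n-\formerell$ and is labeled by $S$ (the unique largest fringe subtree), and $\mathcal F$ is a nonempty set of elements of $\mathcal L$ of total size $\formerell$ labeled by the complement of $S$ (the remaining fringe subtrees, all of size $<n-\formerell$ automatically). The exponential generating series of nonempty sets of $\mathcal L$-trees is $\exp(L)-1=M$ (by \cref{eq:Lien_T_expS}), so the number of such triples is $\binom{n}{n-\formerell}\,\ell_{n-\formerell}\,m_\formerell$ with $m_\formerell=\formerell!\,[z^\formerell]M(z)$, whence, for $n>2\formerell$,
\begin{equation*}
\proba(\kappa(\bm G_n)=\formerell)=\frac{\binom{n}{n-\formerell}\,\ell_{n-\formerell}\,m_\formerell}{\ell_n}=[z^\formerell]M(z)\cdot\frac{[z^{n-\formerell}]L(z)}{[z^n]L(z)}.
\end{equation*}

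The unlabeled case is identical, replacing $\mathcal L$ by $\overline{\mathcal U}$, dropping the binomial, and replacing "set" by "multiset": connected unlabeled cographs of size $n$ number $[z^n]U(z)$, and multisets of $\geq 1$ trees of $\overline{\mathcal U}$ of total size $\formerell$ are counted by $[z^\formerell]\bigl(\exp(\sum_{r\geq 1}\tfrac1r U(z^r))-1\bigr)=[z^\formerell](2U(z)-z)=[z^\formerell]V(z)$ by \cref{eq:U}, so that for $n>2\formerell$,
\begin{equation*}
\proba(\kappa(\bm G^u_n)=\formerell)=[z^\formerell]V(z)\cdot\frac{[z^{n-\formerell}]U(z)}{[z^n]U(z)}.
\end{equation*}
It then remains to pass to the limit. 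By \cref{prop:Asympt_S_Seven} and the transfer theorem, $[z^n]L(z)\sim c\,n^{-3/2}\rho^{-n}$ for a positive constant $c$, hence $[z^{n-\formerell}]L(z)/[z^n]L(z)\to\rho^\formerell$; likewise \cref{prop:Asympt_U} gives $[z^n]U(z)\sim c'\,n^{-3/2}\rho_u^{-n}$ and $[z^{n-\formerell}]U(z)/[z^n]U(z)\to\rho_u^\formerell$. This establishes \cref{eq:limit_pi_l}. Finally, since $M$ and $V$ have non-negative coefficients with zero constant term and converge at their respective radii, $\sum_{\formerell\geq1}\pi_\formerell=M(\rho)=e^{L(\rho)}-1=e^{\log(2)}-1=1$ (using $L(\rho)=\log(2)$ from \cref{prop:Asympt_S_Seven}) and $\sum_{\formerell\geq1}\pi^u_\formerell=V(\rho_u)=2U(\rho_u)-\rho_u=1$ (using $U(\rho_u)=\tfrac{1+\rho_u}{2}$ from \cref{prop:Asympt_U}); as all terms are non-negative, both sequences are probability distributions.

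The step I expect to be the main obstacle is the combinatorial bookkeeping of the second paragraph: checking that \cref{lem:CalculerKappa} genuinely applies (the complete-graph caveat), that for $n>2\formerell$ the largest root-fringe subtree is unique so that the decomposition $(S,T^\ast,\mathcal F)$ is well-defined and bijective, and --- crucially --- that the family $\mathcal F$ of remaining fringe subtrees is counted by the cograph series $M$ (resp.\ $V$), which hinges on the functional equations \cref{eq:SerieS,eq:U} and on the fact that the decorations inside a canonical cotree are completely determined once the root decoration is fixed. Once $\proba(\kappa(\bm G_n)=\formerell)$ is identified with a ratio of coefficients, the asymptotic and normalization parts are routine singularity analysis.
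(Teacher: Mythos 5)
Your proposal is correct and follows essentially the same route as the paper: apply \cref{lem:CalculerKappa}, decompose the cotree at the root into the unique large fringe subtree (of size $n-\formerell>n/2$) plus a nonempty set/multiset of small subtrees counted by $e^{L}-1=M$ (resp. $2U-z=V$), obtain the exact coefficient-ratio formula, and conclude with the transfer theorem and evaluation of $M$, $V$ at their singularities for the normalization. The only differences are cosmetic (explicit labeled bijection with binomial coefficients, and using the Pólya-exponential identity from \cref{eq:U} in place of the paper's case analysis for the residual multiset).
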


\begin{remark}
Readers acquainted with Boltzmann samplers
may note that $(\pi_\formerell)_{\formerell\geq 1}$ and $(\pi_\formerell^u)_{\formerell\geq 1}$
are distributions of sizes of Boltzmann-distributed random labeled and unlabeled cographs, respectively.
The Boltzmann parameters are chosen to be the radii of convergence.
We do not have a direct explanation of this fact.
\end{remark}
\begin{proof}
Recall from \cref{sec:proofLabeled,sec:unlabeled} that $M(z)=2L(z)-z$ and $V(z)=2U(z)-z$.
It follows from \cref{prop:Asympt_S_Seven,prop:Asympt_U} that $\rho=2\log(2)-1$ and $\rho_u \approx 0.2808$ are their respective radii of convergence.
We first prove that  $(\pi_\formerell)$ (resp. $(\pi^u_\formerell)$) sum to one: 
\begin{align*}
\sum_{\formerell \geq 1} \pi_\formerell &= \sum_{\formerell\geq 1} \rho^\formerell [z^\formerell]M(z)=M(\rho)=2L(\rho)-\rho=1,\\
\sum_{\formerell \geq 1} \pi^u_\formerell &= \sum_{\formerell\geq 1} \rho_u^\formerell [z^\formerell]V(z)=V(\rho_u)=2U(\rho_u)-\rho_u = 1,
\end{align*}
using \cref{prop:Asympt_S_Seven,prop:Asympt_U} for the last equalities. 
\smallskip

For the remaining of the proof, we fix $\formerell \geq 1$.
In the labeled case, let $\bm T_n$ be the canonical cotree of $\bm G_n$.
Since $\bm G_n$ is conditioned to be connected, $\bm T_n$ is a uniform labeled canonical cotree of size $n$
conditioned to have root decoration $\One$.
Forgetting the decoration, we can see it as a uniform random element of size $n$ in $\mathcal L$.

Let $n > 2\formerell$. As the components of $\bm G_n$ correspond to the subtrees attached to the root of $\bm T_n$, 
using \cref{lem:CalculerKappa} we have ${\kappa}(\bm G_n)=\formerell$
if and only if $\bm T_n$ is composed of a tree of $\mathcal{L}$ of size $n-\formerell$
and $k\geq 1$ trees of $\mathcal{L}$ of total size $\formerell$, all attached to the root.
Since $n> 2\formerell$, the fringe subtree of size $n- \formerell$ is uniquely defined, and there is only one such decomposition.
Therefore, %
for every fixed $\formerell \ge 1$ and $n>2j$, we have
\[
\mathbb{P}({\kappa}(\bm G_n)=\formerell) 
=\frac{[z^{n-\formerell}]L(z) \, [z^{\formerell}] \! \left(e^{L(z)}-1\right)}{[z^{n}]L(z)}.
\]
From \cref{prop:Asympt_S_Seven},
the series $L(z)$ has radius of convergence $\rho$,
is $\Delta$-analytic and has a singular expansion amenable to singularity analysis.
Thus, the transfer theorem ensures that $\frac{[z^{n-\formerell}]L(z)}{[z^{n}]L(z)}$ tends to $\rho^\formerell$,
so that
$$
\mathbb{P}({\kappa}(\bm G_n)=\formerell) \underset{n\to +\infty}{\to}  \rho^\formerell \, [z^{\formerell}] \! \left(e^{L(z)}-1\right) = \pi_\formerell,
$$
where we used $M(z)=e^{L(z)}-1$ (see \cref{eq:Lien_T_expS}).

In the unlabeled case, let $\bm T_n^u$ be the canonical cotree of $\bm G_n^u$. 
Like in the labeled case, forgetting the decoration, it is a uniform element of $\overline{\mathcal U}$ of size $n$.
Let $n >2\formerell$. We have ${\kappa}(\bm G_n^u)=\formerell$
if and only if $\bm T_n^u$  has a fringe subtree of size $n-\formerell$ at the root. 

Let us count the number of trees of $\overline{\mathcal U}$ of size $n$ that have a fringe subtree of size $n-\formerell$ at the root.  Since $n-\formerell>n/2$, there must be exactly one such fringe subtree, and there are $[z^{n-\formerell}]U(z)$ choices for it. Removing it, the rest of the tree contains $\formerell$ leaves, and is either a tree of $\overline{\mathcal U}$ of size $\geq 2$ (if the root still has degree at least 2),
or a tree formed by a root and a single tree of $\overline{\mathcal U}$ attached to it.
So the number of choices for the rest is $[z^{\formerell}](2U(z)-z)$.
We deduce that for $j\geq 1$ and $n>2j$,
\[
\mathbb{P}({\kappa}(\bm G_n^u)=\formerell) 
= \frac{[z^{n-\formerell}]U(z) \, [z^{\formerell}](2U(z)-z)}{[z^{n}]U(z)}.
\]
From \cref{prop:Asympt_U}, %
the series $U(z)$ has radius of convergence $\rho_u$,
is $\Delta$-analytic and has a singular expansion amenable to singularity analysis.
The transfer theorem ensures that $\frac{[z^{n-\formerell}]U(z)}{[z^{n}]U(z)}$ tends to $\rho_u^\formerell$,
so that
\[
\mathbb{P}({\kappa}(\bm G_n^u)=\formerell) \underset{n\to +\infty}{\to} \rho_u^\formerell \, [z^{\formerell}] (2U(z)-z)= \pi^u_\formerell
\]
where we used $V(z) = 2U(z)-z$.%
\end{proof}

\begin{remark}
In the labeled case, we could have used \cref{lem:CalculerKappa} and local limit results for trees
instead of the generating series approach above.
Indeed, the canonical cotree of $\bm G_n$ (without its decorations) is distributed as a Galton-Watson
tree with an appropriate offspring distribution conditioned on having $n$ leaves.
Such conditioned Galton-Watson trees converge in the local sense near the root 
towards a Kesten's tree \cite[Section 2.3.13]{AD15}.
Since Kesten's trees have a unique infinite path from the root,
this convergence implies the convergence (without renormalization) of 
the sizes of all components of $\bm G_n$ but the largest one.
Therefore the sum $\kappa(\bm G_n)$ of these sizes also converges (without renormalization);
the limit can be computed (at least in principle) using the description of Kesten's trees.

In the unlabeled case, the canonical cotree of $\bm G_n^u$ (without its decorations) 
belongs to the family of random {\em P\'olya} trees.
Such trees are {\em not} conditioned Galton-Watson trees.
For scaling limits, it has been proven they can be approximated by conditioned Galton-Watson trees
and hence converge under suitable conditions to the Brownian Continuum Random Tree \cite{PanaStufler},
but we are not aware of any local limit result for such trees.
\end{remark}

\subsection*{Acknowledgments}
MB is partially supported by the Swiss National Science Foundation, under grants number 200021\_172536 and PCEFP2\_186872.

\end{document}